\newtheorem{thm}{Theorem}[section]
\newtheorem{lemma}[thm]{Lemma}
\newtheorem{proposition}[thm]{Proposition}
\newtheorem{corollary}[thm]{Corollary}
\theoremstyle{definition}
\newtheorem{remark}[thm]{Remark}
\newcommand{\w}{\widetilde}
\newcommand{\wi}{\widehat}
\newcommand{\Sing}{\operatorname{Sing}}
\newcommand{\NE}{\operatorname{NE}}
\newcommand{\Exc}{\operatorname{Exc}}
\newcommand{\Lo}{\operatorname{Locus}}
\newcommand{\codim}{\operatorname{codim}}
\title[Non-elementary Fano conic bundles]{Non-elementary Fano conic bundles}
\author{E. A.~Romano}
\address{Universit\`a di Torino,
Dipartimento di Matematica,
via Carlo Alberto 10,
10123 Torino - Italy}
\email{eleonoraanna.romano@unito.it}
\begin{document}
\maketitle

\begin{abstract}
Let $X$ be a complex, projective, smooth and Fano variety. We study \textit{Fano conic bundles} $f\colon X\to Y$. Denoting by $\rho_{X}$ the Picard number of $X$, we investigate such contractions when $\rho_{X}-\rho_{Y}>1$, called \textit{non-elementary}. We prove that $\rho_{X}-\rho_{Y}\leq 8$, and we deduce new geometric information about our varieties $X$ and $Y$, depending on $\rho_{X}-\rho_{Y}$. Using our results, we show that some known examples of Fano conic bundles are elementary.
Moreover, when we allow that $X$ is locally factorial with canonical singularities and with at most finitely many non-terminal points, and $f\colon X\to Y$ is a fiber type $K_{X}$-negative contraction with one-dimensional fibers, we show that $\rho_{X}-\rho_{Y}\leq 9$.
\end{abstract}

\section{Introduction}

\noindent Let $X$ be a complex, projective, smooth and Fano variety of dimension $n$. In this paper we study a particular kind of fiber type contraction $f\colon X\to Y$, called \textit{Fano conic bundle}. This means that $f$ is a morphism between smooth projective varieties where every fiber is isomorphic to a conic in the projective plane. The main references for the background material on conic bundles are given in Section 2. \\

Let us denote by $\mathcal{N}_{1}(X)$ the $\mathbb{R}$-vector space of one-cycles with real coefficients, modulo numerical equivalence, whose dimension is the \textit{Picard number} $\rho_{X}$. Let us consider the convex cone $\NE{(X)}$ in $\mathcal{N}_{1}(X)$, spanned by the classes of effective curves. Being $X$ a Fano manifold, by the Cone Theorem it follows that $\NE{(X)}$ is closed, polyhedral, and it is spanned by finitely many classes of rational curves.

Let us denote by $\NE{(f)}$ the \textit{relative cone} of $f$, that is the convex subcone of $\NE{(X)}$ containing all classes of curves that are contracted by the conic bundle. In our case, one has that $\text{dim}{\ \NE(f)}=\rho_{X}-\rho_{Y}$.\\

The aim of this paper is the study of Fano conic bundles where the dimension of the relative cone is greater than one, that are called \textit{non-elementary}.
 
Our purpose is to deduce geometric information on our varieties and properties of the conic bundle, depending on the dimension of the relative cone. In particular, the study of non-elementary Fano conic bundle in higher dimension could be useful to provide a description of Fano manifolds which admit a conic bundle structure. \\

In \cite[$\S$4]{WIS} Wi$\acute{\text{s}}$niewski studied the following problem: given a Fano conic bundle $f\colon X\to Y$, is $Y$ Fano or not? The following theorem is the main result of this paper (see Theorem \ref{TEOPRINC} for a slightly stronger statement), and allows us to give answers, in some cases. 
\begin{thm} \label{ThmINTRODUZ}
Let $f\colon X\to Y$ be a Fano conic bundle. Then $\rho_{X}-\rho_{Y}\leq 8$. Moreover:
\begin{enumerate} [(1)]
\item if $\rho_{X}-\rho_{Y}\geq 4$, then $X\cong S\times T$, where $S$ is a del Pezzo surface, $T$ is an $(n-2)$-dimensional Fano manifold, $Y\cong \mathbb{P}^{1}\times T$ so that $Y$ is Fano, and $f$ is induced by a conic bundle on $S$.
\item If $\rho_{X}-\rho_{Y}=3$, then $f$ has only reduced fibers, $Y$ is a Fano variety, and there exists a smooth $\mathbb{P}^{1}$-fibration\footnote{A smooth $\mathbb{P}^{1}$-fibration is a smooth morphism with fibers isomorphic to $\mathbb{P}^{1}$.} $\xi\colon Y\to Y^{\prime}$, where $Y^{\prime}$ is smooth and Fano. 
\item If $\rho_{X}-\rho_{Y}=2$ and $Y$ is not Fano, then $\rho_{Y}\geq 3$ and there exists a smooth $\mathbb{P}^{1}$-fibration $\xi\colon Y\to Y^{\prime}$, where $Y^{\prime}$ is smooth.
\end{enumerate}
\end{thm}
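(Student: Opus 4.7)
The plan is to decompose the relative cone $\NE(f)$ into extremal rays of $\NE(X)$, classify their contractions using the ``short'' bound on length, and exploit the resulting exceptional divisors to apply Casagrande's theorem on Fano manifolds with large Lefschetz defect.

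First I would enumerate the extremal rays $R\subset\NE(f)$. Since a general fiber $F\cong\mathbb{P}^{1}$ of $f$ has $-K_{X}\cdot F=2$, every ray $R\subset\NE(f)$ has length at most $2$, so by the classical theory of extremal contractions on smooth varieties (Ando, Wi\'sniewski) the associated contraction $\varphi_{R}\colon X\to X_{R}$ is either of fiber type (in which case it factors $f$) or divisorial with $\Exc(\varphi_{R})$ a smooth projective bundle over a smooth subvariety; small contractions are ruled out by the smoothness hypothesis. For each divisorial ray, the exceptional divisor $E_{R}$ has the property that the pushforward $\mathcal{N}_{1}(E_{R})\to\mathcal{N}_{1}(X)$ has image of codimension at least one, and the deficit is controlled by the base of the bundle structure on $E_{R}$.

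When $\rho_{X}-\rho_{Y}$ is large, $\NE(f)$ has many extremal rays. Using that at most one of them can be of fiber type of relative dimension $>1$ (otherwise $f$ factors non-trivially and one reduces dimensionally), and that the exceptional divisors $E_{R_{i}}$ of distinct divisorial rays are pairwise disjoint (since $E_{R_{i}}\cdot R_{j}>0$ would force $E_{R_{i}}$ to dominate $X_{R_{j}}$, contradicting its contracted nature), I would produce a family of mutually disjoint exceptional divisors whose cumulative contribution to the Lefschetz defect $\delta_{X}$ grows linearly with $\rho_{X}-\rho_{Y}$. Then Casagrande's theorem applies: as soon as $\delta_{X}\geq 4$, one obtains $X\cong S\times T$ with $S$ a del Pezzo surface of Picard number $\rho_{S}\leq 9$ and $T$ a Fano manifold of dimension $n-2$. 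The identity $\rho_{S}=\rho_{X}-\rho_{Y}+1$ then yields the bound $\rho_{X}-\rho_{Y}\leq 8$ and case $(1)$; the structure $Y\cong\mathbb{P}^{1}\times T$ together with the descent of $f$ from a conic bundle on $S$ follows by the universal property of the Stein factorisation applied to the projection $S\times T\to T$.

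For cases $(2)$ and $(3)$, I would handle the few possibilities for the extremal-ray configuration of $\NE(f)$ directly. Picking a fiber-type extremal ray $R$ whose contraction $\varphi_{R}\colon X\to Z$ has all fibers one-dimensional yields an induced smooth $\mathbb{P}^{1}$-fibration $\xi\colon Y\to Z=:Y'$; the Fano property of $Y'$ in $(2)$ and of $Y$ in $(3)$ is then verified via the criterion implicit in \cite[$\S$4]{WIS}, where failure of Fano-ness is detected by the presence of prime divisors of $X$ over which $f$ has reducible or multiple fibers. The main obstacle will be the divisor-counting in case $(1)$: quantitatively controlling the number of disjoint exceptional divisors produced by divisorial extremal rays in $\NE(f)$, and thus the Lefschetz defect $\delta_{X}$, is the technical heart of the argument, and is precisely the step that forces the sharp bound $8$ rather than a weaker one.
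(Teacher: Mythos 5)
Your outline for part (1) is in the right spirit (disjoint divisors, Lefschetz defect, Casagrande's product theorem), but it contains a false step and misses the borderline case. The claim that exceptional divisors of distinct divisorial rays in $\NE(f)$ are pairwise disjoint is not true: for each component $A_{i}$ of the discriminant one gets a \emph{pair} of divisors $E_{i},\hat{E}_{i}$ with $f^{*}(A_{i})=E_{i}+\hat{E}_{i}$ and $E_{i}\cdot\hat{e}_{i}=\hat{E}_{i}\cdot e_{i}=1$, so the two divisors attached to the same $A_{i}$ always meet; only pairs over distinct $A_{i}$'s are disjoint, and your argument ``$E_{R_{i}}\cdot R_{j}>0$ forces $E_{R_{i}}$ to dominate $X_{R_{j}}$'' does not hold. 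Consequently the naive count gives only $\delta_{X}\geq r-1$ (one divisor disjoint from $r$ independent classes), which for $r=4$ yields $\delta_{X}\geq 3$, not the $\delta_{X}\geq 4$ needed to invoke the product theorem. The paper needs a separate argument precisely at $r=4$: either two of the pairs give four linearly independent divisor classes, or all pairs are numerically proportional, in which case the disjoint nef divisors $A_{i}$ cut a facet of $\NE(Y)$ and produce a fibration $Y\to\mathbb{P}^{1}$ whose general fiber pulls back to a divisor of $X$ of large codimension in $\mathcal{N}_{1}$. Without this, the chain $r\geq 4\Rightarrow X\cong S\times T\Rightarrow r=\rho_{S}-1\leq 8$ does not close.

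For parts (2) and (3) the proposed route fails at the start: when $r>1$ every extremal ray of $\NE(f)$ is divisorial (a fiber-type ray inside $\NE(f)$ would force $f$ to be elementary), so there is no ``fiber-type extremal ray $R\subset\NE(f)$ with one-dimensional fibers'' from which to induce $\xi\colon Y\to Y'$. The smooth $\mathbb{P}^{1}$-fibration on $Y$ does not come from $\NE(f)$ at all: in the paper it is produced by running a special MMP for $-E_{1}$ (resp.\ for minus a component of $f^{-1}(\bigtriangleup_{g})$), which yields a prime divisor $G_{1}\subset X$ that is a $\mathbb{P}^{1}$-bundle dominating $Y$; the images $f(g_{1})$ of its fibers span an extremal ray of $\NE(Y)$ whose contraction is $\xi$. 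Your sketch also leaves unaddressed the two genuinely delicate points: in (2), that $f$ has only reduced fibers (proved by contradiction via $Y\cong\mathbb{P}^{1}\times Y'$ and Mori--Mukai's result that the discriminant of an elementary conic bundle has no smooth simply connected components), which is what makes $Y$ and $Y'$ Fano; and in (3), the inequality $\rho_{Y}\geq 3$, which requires showing $\dim\mathcal{N}_{1}(\bigtriangleup_{g},Y)\geq 2$ and that this space lies in $A_{1}^{\perp}$.
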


An immediate consequence of Theorem \ref{ThmINTRODUZ} is the following result which shows that the target of a non-elementary Fano conic bundle is often Fano.

\begin{corollary} \label{esempio} Let $f\colon X\to Y$ be a Fano conic bundle, with $Y$ not Fano. Then $\rho_{X}-\rho_{Y}=2$ or $\rho_{X}-\rho_{Y}=1$. If $\rho_{X}-\rho_{Y}=2$, then $\rho_{Y}\geq 3$ and $Y$ has a smooth $\mathbb{P}^{1}$-fibration.
\end{corollary}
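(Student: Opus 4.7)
The plan is to derive the corollary as a direct case-by-case consequence of Theorem \ref{ThmINTRODUZ}, since the three cases listed in that theorem together with the bound $\rho_X - \rho_Y \leq 8$ cover every possibility for a Fano conic bundle. I would not need any new geometric input beyond Theorem \ref{ThmINTRODUZ}; the argument is essentially a contrapositive reading of its statement.

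First I would note that because $f\colon X\to Y$ is a fiber type contraction (fibers are conics, hence positive-dimensional), the relative Picard number satisfies $\rho_X-\rho_Y \geq 1$. Together with Theorem \ref{ThmINTRODUZ} this forces $1\leq \rho_X-\rho_Y\leq 8$. I would then rule out the values $3,4,5,6,7,8$ under the hypothesis that $Y$ is not Fano: if $\rho_X-\rho_Y\geq 4$, then part (1) of Theorem \ref{ThmINTRODUZ} gives a product decomposition $Y\cong \mathbb{P}^1\times T$ with $T$ Fano, so $Y$ itself is Fano, contradicting the assumption; and if $\rho_X-\rho_Y=3$, part (2) asserts directly that $Y$ is Fano, again a contradiction.

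Hence $\rho_X-\rho_Y\in\{1,2\}$, which is the first assertion of the corollary. In the remaining subcase $\rho_X-\rho_Y=2$ with $Y$ not Fano, the additional information $\rho_Y\geq 3$ and the existence of a smooth $\mathbb{P}^1$-fibration $\xi\colon Y\to Y'$ is exactly the content of part (3) of Theorem \ref{ThmINTRODUZ}. There is no real obstacle here, as the heavy lifting lies entirely in the classification theorem; this corollary is meant to be an immediate reformulation tailored to applications where the target is known (or assumed) to fail the Fano condition.
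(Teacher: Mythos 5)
Your proposal is correct and matches the paper's intent exactly: the corollary is presented there as an immediate consequence of Theorem \ref{ThmINTRODUZ}, obtained by the very case analysis you describe (parts (1) and (2) force $Y$ Fano when $\rho_X-\rho_Y\geq 3$, and part (3) supplies the conclusions in the case $\rho_X-\rho_Y=2$). No gaps.
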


Note that the bound $\rho_{X}-\rho_{Y}\leq 8$ of Theorem \ref{ThmINTRODUZ} is sharp, because we get the equality when $f\colon X\to \mathbb{P}^{1}$ is a conic bundle on a del Pezzo surface $X$ with $\rho_{X}=9$. Moreover, there exists a Fano 3-fold $X$ with a conic bundle $f\colon X\to Y$ where $X$ is not a product and $\rho_{X}-\rho_{Y}=3$ (see $\S$\ref{noproduct}), so that also the bound $\rho_{X}-\rho_{Y}\geq 4$ is sharp. 

We do not know whether case $(3)$ of Theorem \ref{ThmINTRODUZ} can happen, on the other hand, there exist Fano conic bundles $f\colon X\to Y$, where $Y$ is not Fano and $\rho_{X}-\rho_{Y}=1$. In $\S$\ref{elementaryex} we first review Wi$\acute{\text{s}}$niewski's example of a Fano conic bundle $f\colon X\to Y$ in which $Y$ is not a Fano variety (see \cite[Example $\S 4$]{WIS}), then using Corollary \ref{esempio} we deduce that $f$ is elementary.\\

Besides the smooth case, we will also analyse the case in which $X$ has some mild singularities, where we prove the following:
\begin{thm} \label{singular} Let $X$ be a locally factorial, projective, Fano variety with canonical singularities and with at most finitely many non-terminal points. Let $f\colon X\to Y$ be a fiber type contraction with one-dimensional fibers. Then $\rho_{X}-\rho_{Y}\leq 9$.
\end{thm}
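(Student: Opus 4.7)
The plan is to reduce Theorem \ref{singular} to the smooth case (Theorem \ref{ThmINTRODUZ}) by constructing a smooth Fano model of $X$ equipped with an induced Fano conic bundle, while tracking the change in the relative Picard number. The extra unit in the bound (nine rather than eight) should correspond to a single divisor class that appears from resolving the finitely many non-terminal singular points.

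Concretely, I would first take a projective birational morphism $\pi\colon\widetilde{X}\to X$ resolving the non-terminal points $p_1,\ldots,p_k$ of $X$ and being an isomorphism over the terminal locus. Since $X$ has canonical singularities and is locally factorial, and the non-terminal points are isolated, one may choose $\pi$ to be crepant, so that $K_{\widetilde{X}}=\pi^{*}K_X$ and hence $-K_{\widetilde{X}}$ is nef and big on the smooth variety $\widetilde{X}$. Running a suitable $K$-trivial MMP would then produce a smooth Fano variety $\widetilde{X}'$ with an induced fiber-type contraction $\widetilde{f}'\colon\widetilde{X}'\to Y'$ having one-dimensional fibers. By the Andreatta--Wi\'sniewski structure theorem for extremal contractions with fibers of dimension at most one from a smooth variety, $\widetilde{f}'$ is then a smooth conic bundle onto a smooth base, so it is a Fano conic bundle, and Theorem \ref{ThmINTRODUZ} applied to $\widetilde{f}'$ yields $\rho_{\widetilde{X}'}-\rho_{Y'}\leq 8$.

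The remaining, and most delicate, step is the Picard number comparison
\[
\rho_X-\rho_Y \;\leq\; (\rho_{\widetilde{X}'}-\rho_{Y'})+1.
\]
This demands a careful local analysis at each non-terminal point $p_i$, showing that at most one extra class coming from the resolution and subsequent MMP survives in the relative cone $\NE(\widetilde{f}')$ when compared with $\NE(f)$. The main inputs are the local factoriality of $X$, the finiteness of the non-terminal points, and the classification of extremal contractions with one-dimensional fibers under canonical hypotheses (in the spirit of Ando and Andreatta--Wi\'sniewski). Combining these, one would conclude $\rho_X-\rho_Y\leq 8+1=9$. The principal obstacle is precisely this comparison: controlling the interaction between the MMP on $\widetilde{X}$ and the fibration $\widetilde{X}\to Y$, so that the Fano conic bundle structure is preserved while the relative Picard number changes by at most one. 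In particular, one must rule out the possibility that several of the non-terminal points each contribute an independent extremal ray to $\NE(f)$ which does not come from pullback of the base, and this rigidity should ultimately rest on the local factoriality of $X$ together with the structure theorems $(1)$--$(3)$ of Theorem \ref{ThmINTRODUZ} applied to $\widetilde{f}'$.
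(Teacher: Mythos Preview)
Your approach has genuine gaps, and it differs substantially from the paper's argument.

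First, the claim that the isolated non-terminal canonical points admit a \emph{crepant} resolution is not justified: canonical singularities need not have crepant resolutions, so you cannot in general arrange $K_{\widetilde{X}}=\pi^{*}K_X$. Even granting a resolution with $-K_{\widetilde{X}}$ nef and big, a ``$K$-trivial MMP'' will not in general terminate at a \emph{smooth} Fano variety, so the reduction to Theorem~\ref{ThmINTRODUZ} is not available. Finally, the Picard number comparison $\rho_X-\rho_Y\leq(\rho_{\widetilde{X}'}-\rho_{Y'})+1$ is the heart of your proposal, and you give no mechanism for it; there is no evident reason why several non-terminal points should together contribute only one class to the relative cone, and local factoriality alone does not force this. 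As written, this step is a hope rather than an argument.

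The paper does not pass to a smooth model at all. Instead it works directly on $X$: Proposition~\ref{TEO1} (valid under the singular hypotheses via Theorem~\ref{NOCE}) produces $r-1$ pairwise disjoint divisors $E_1,\dots,E_{r-1},\hat{E}_1$ whose classes are independent (Lemma~\ref{div. ind}), so the prime divisor $E_{r-1}$ satisfies $\codim\mathcal{N}_1(E_{r-1},X)\geq r-1$, giving $\delta_X\geq r-1$ (Lemma~\ref{delta1}). One then checks that $X$ is Gorenstein (local factoriality makes $K_X$ Cartier, and canonical singularities are Cohen--Macaulay), so Della Noce's singular bound $\delta_X\leq 8$ (Theorem~\ref{prodotto}) applies, yielding $r\leq 9$. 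The ``extra unit'' thus has nothing to do with resolving singularities: it comes from the fact that in the singular setting one only has the inequality $\delta_X\geq r-1$, whereas in the smooth case one can exploit the finer structure (or the smoothness of $g$) to sharpen this to $r\leq 8$.
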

By the above theorem it follows that when we allow some mild singularities for $X$, there could exist examples of fiber type contractions with one-dimensional fibers and relative Picard dimension $9$. The author does not know if this can happen.\\ 

Let us describe in more detail the content of this paper. In Section 2 we set up notation and terminology, and we present some preliminaries on conic bundles and their most important geometric properties. 

We conclude this section by recalling some basic facts and results related to the Minimal Model Program (MMP) for divisors in Fano manifold, and we summarize without proofs the relevant material that will be needed.\\

In Section 3, we discuss some results that will be essential to investigate conic bundles in the next sections. In particular, in Propositions \ref{TEO1} and \ref{smoothcase} we show that there exists a particular factorization of our Fano conic bundle $f\colon X\to Y$ into elementary contractions. More precisely, if $r:=\rho_{X}-\rho_{Y}$, using this factorization we find $r-1$ smooth prime divisors $A_{1},\dots, A_{r-1}$ of $Y$, pairwise disjoint. We prove that the fibers of $f$ over $A_{i}$ are reducible and that for every $i=1,\dots,r-1$, $f^{*}(A_{i})=E_{i}+\hat{E}_{i}$, where $E_{i}$, $\hat{E}_{i}$ are prime divisors of $X$, which play an essential role throughout the paper. 

Another main geometric ingredient is represented by an invariant of $X$, the \textit{Lefschetz defect} $\delta_{X}$, whose definition is the following: 
\begin{center}
$\delta_{X}:= \text{max} \{\dim \text{coker}(i_{*}\colon \mathcal{N}_{1}(D)\to \mathcal{N}_{1}(X))\mid D \hookrightarrow X \text{ a prime divisor}\}$.
\end{center} 
This invariant was introduced in \cite{CAS1} in the smooth case, where the author proved that $\delta_{X}\leq 8$. We refer the reader to \cite{CAS1} and \cite{NOCE} for the properties of $\delta_{X}$. 
\begin{thm} [\cite{CAS1}, Theorem 1.1; \cite{NOCE}, Theorem 1.2] \label{prodotto}  Let $X$ be a $\mathbb{Q}$-factorial Gorenstein Fano variety with canonical singularities, and with at most finitely many non-terminal points. Then $\delta_{X}\leq 8$. Moreover, if $X$ is smooth and $\delta_{X}\geq 4$ then $X\cong S\times T$, where $S$ is a del Pezzo surface, $\rho_{S}=\delta_{X}+1$, and $\delta_{T}\leq \delta_{X}$.
\end{thm}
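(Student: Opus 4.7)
The plan is to show that a large Lefschetz defect forces a Mori fibre space structure on $X$ with del Pezzo surface fibres, and then to invoke the classification $\rho\leq 10$ for del Pezzo surfaces with canonical singularities. Fix a prime divisor $D\subset X$ achieving the maximum, write $c:=\delta_X$, and set $V:=i_{*}\N(D)\subset\N(X)$, so that $\codim V=c$.

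The first step is the numerical observation that every extremal ray $R$ of $\NE(X)$ with $R\not\subset V$ satisfies $D\cdot R>0$: otherwise an irreducible curve representing $R$ is contained in $D$, forcing $R\subset V$. Under the stated hypotheses the Cone Theorem applies, so $\NE(X)$ is rational polyhedral, and there is a face $\tau\subset\NE(X)$ of dimension $\geq c$ spanned by extremal rays outside $V$, on which $D$ is strictly positive. The associated face contraction $\varphi\colon X\to Y$ is therefore of fibre type, and its general fibre $F$ is a $\mathbb{Q}$-factorial Gorenstein Fano variety with at worst terminal singularities (the finiteness of the non-terminal locus of $X$ ensures the generic fibre avoids it), satisfying $\rho_F\geq c+1$. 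A separate argument—running the relative MMP on $D\cap F$ inside $F$ and lifting back to $X$—shows that $\dim F\geq 3$ would produce a prime divisor on $X$ of defect strictly greater than $c$, contradicting maximality. Hence $\dim F=2$, and $F$ is a del Pezzo surface with canonical singularities; the classification gives $\rho_F\leq 9$, whence $c\leq 8$.

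For the product statement, assume $X$ is smooth and $c\geq 4$, so $\rho_F=c+1\geq 5$. The fibre $S:=F$ is a smooth del Pezzo surface with $\rho_S\geq 5$, and its $(-1)$-curves form finitely many deformation classes. Using smoothness of $X$ and that $\tau$ is the full relative cone of $\varphi$, these classes assemble into pairwise disjoint smooth prime divisors $E_1,\ldots,E_k\subset X$, each meeting every fibre in a $(-1)$-curve. Contracting them in turn (which is globally possible since they are disjoint) trivialises $\varphi$ and exhibits $X\cong S\times T$ with $T$ a smooth Fano variety; a comparison of Picard numbers identifies $T$ with $Y$. Finally, for any prime divisor $D_T\subset T$ the product $S\times D_T\subset X$ is prime, and the cokernel of $\N(S\times D_T)\to\N(X)$ equals that of $i_{*}\colon\N(D_T)\to\N(T)$, which yields $\delta_T\leq\delta_X$.

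The main obstacle is the fibre-dimension reduction: to exclude $\dim F\geq 3$ one must track carefully how the defect of $D$ evolves under the relative MMP on $F$, and it is precisely here that the singularity hypotheses on $X$—locally factorial, canonical, with at most finitely many non-terminal points—become essential, since they guarantee that $D$ remains Cartier on every intermediate model and that all extremal contractions behave as in the smooth case.
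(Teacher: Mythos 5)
This statement is quoted in the paper from \cite{CAS1} and \cite{NOCE} without proof, so there is no internal argument to compare against: your proposal has to stand on its own as a proof of a substantial external theorem, and it does not, because the very first construction fails. You claim that every extremal ray $R\not\subset V:=\mathcal{N}_{1}(D,X)$ satisfies $D\cdot R>0$, and that these rays span a face $\tau$ of $\NE(X)$ of dimension at least $c$ admitting a fibre type contraction. Neither assertion is justified. If $D\cdot R=0$, a curve generating $R$ need not lie in $D$ (only $D\cdot R<0$ forces $\Lo(R)\subseteq D$), so strict positivity can fail; and, more seriously, a collection of extremal rays on which an effective divisor is positive has no reason to span a \emph{face} of $\NE(X)$ --- faces are cut out by nef divisors vanishing on them, and in general there is no contraction whose relative cone is ``the span of the rays outside $V$.'' Without this contraction the whole reduction to a del Pezzo fibration collapses. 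The actual proofs in \cite{CAS1} and \cite{NOCE} proceed differently: one runs a special MMP for $-D$, shows that at least $c-1$ of its steps are divisorial contractions whose exceptional divisors give pairwise disjoint smooth $\mathbb{P}^{1}$-bundles $G_{i}\subset X$ with $G_{i}\cdot g_{i}=-1$ (exactly Proposition \ref{MMP} quoted in the present paper), and only then, through a careful analysis of the covering families of curves $g_{i}$ and the associated fibration, extracts the del Pezzo surface and the bound on its Picard number.

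The later steps are also asserted rather than proved. The exclusion of $\dim F\geq 3$ (``a separate argument shows that $\dim F\geq 3$ would produce a prime divisor of defect greater than $c$'') is precisely the hard core of the theorem and cannot be waved through. Likewise, in the product step, the claims that the $(-1)$-curves of the fibres assemble into pairwise disjoint smooth prime divisors $E_{1},\dots,E_{k}$ and that contracting them ``trivialises'' the fibration are far from automatic; this is where \cite{CAS1} needs an extended argument on families of rational curves and the structure of the resulting contractions. In short, the outline captures the right heuristic --- large $\delta_{X}$ should force a del Pezzo fibration --- but every step that does real work is missing or rests on a false premise.
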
 
In Lemmas \ref{delta1} and \ref{delta}, using the numerical classes of the divisor $E_{i}$, $\hat{E}_{i}$, we find some lower-bounds for $\delta_{X}$ (in terms of $\rho_{X}-\rho_{Y}$), that we need to show Theorem \ref{ThmINTRODUZ}. We conclude this section with the proof of  Theorem \ref{singular}.\\

Section 4 contains the central part of the paper: we state and prove Theorem \ref{TEOPRINC}, which is a slightly stronger version of Theorem \ref{ThmINTRODUZ}, and some corollaries are discussed. 

Let us summarize the strategy used to prove Theorem \ref{ThmINTRODUZ}. 
To show $(1)$ we first prove that $\delta_{X}\geq 4$. Then, by Theorem \ref{prodotto} we know that $X$ is a product of a surface with another variety, and it is easy to deduce the rest of the statement.

The proof of $(2)$ is more complex and will be divided into some steps. We first analyse the simpler case in which $X$ is a product of a surface with another variety, and we get the claim similarly to $(1)$. Then we treat the case in which $X$ is not such a product, and we use some results related to the MMP for divisors in Fano variety. In particular, we apply MMP to one divisor among $E_{i}$, $\hat{E}_{i}$, for some $i=1,2$. In this way, we get a special prime divisor of $X$ that dominates $Y$. This divisor is a $\mathbb{P}^{1}$-bundle, and the images of its fibers through the conic bundle give a family of rational curves which span an extremal ray of $Y$, whose contraction is the smooth $\mathbb{P}^{1}$-fibration $\xi\colon Y\to Y^{\prime}$ required by the statement. Finally, we prove that the conic bundle has only reduced fibers, and we see that this condition implies that $Y$ and $Y^{\prime}$ are both Fano. 

The proof of $(3)$ adopts the same technique of $(2)$: we apply the MMP to a prime divisor of $X$ to get another special prime divisor of $X$ which dominates $Y$, and in the same way as before we find a smooth $\mathbb{P}^{1}$-fibration on $Y$. 

We conclude with Section 5, where applying our main Theorem we obtain some examples and related results. \\

\textit{Acknowledgements}. I wish to express my thanks to my PhD advisor Cinzia Casagrande for having introduced me to the subject. I am grateful for her constant guidance, for her several and essential suggestions, and for everything I learnt thanks to her. 

The final part of this work was written during a visit to the Institute of Mathematics of the Polish Academy of Sciences, and it was partially supported by the grant 346300 for IMPAN from the Simons Foundation and the matching 2015-2019 Polish MNiSW fund. I would to thank IMPAN for the kind hospitality. 

Finally, I am grateful to Professor St$\acute{\text{e}}$phane Druel for some helpful hints.
\section{Notation and Preliminaries}
\subsection{Notations and Conventions}
\noindent We work over the field of complex numbers. Let $X$ be a normal projective and $\mathbb{Q}$-factorial variety with arbitrary dimension $n$. We denote by $X_{reg}$ the smooth locus of $X$.

$X$ is called a \textit{Fano variety} if $-K_{X}$ admits a nonzero multiple which is Cartier and ample.\\

$\mathcal{N}_{1}(X)$ (respectively, $\mathcal{N}^{1}(X)$) is the $\mathbb{R}$-vector space of one-cycles (respectively, Cartier divisors) with real coefficients, modulo numerical equivalence. 

$\rho_{X}:= \dim{\mathcal{N}_{1}(X)}=\dim{\mathcal{N}^{1}(X)}$ is the \textit{Picard number} of $X$.

Let $C$ be a one-cycle of $X$. We denote by $[C]$ its numerical equivalence class in $\mathcal{N}_{1}(X)$, by $\mathbb{R}[C]$ the linear span of $[C]$ in $\mathcal{N}_{1}(X)$, and by $\mathbb{R}_{\geq0}[C]$ the corresponding ray. Similarly, for $D$ a $\mathbb{Q}$-Cartier divisor in $X$, we denote by $[D]$ its numerical equivalence class in $\mathcal{N}^{1}(X)$. The symbol $\equiv$ stands for numerical equivalence (for both one-cycles and $\mathbb{Q}$-Cartier divisors). If $D\subset X$ is a $\mathbb{Q}$-Cartier divisor, we define $D^{\perp}:=\{\gamma\in \mathcal{N}_{1}(X)\mid \gamma\cdot D=0 \}$.

$\NE(X)\subset \mathcal{N}_{1}(X)$ is the convex cone generated by classes of effective curves, and $\overline{\NE}(X)$ is its closure.

Let $R$ be an extremal ray of $X$. $\Lo{(R)}\subseteq X$ is the union of all curves whose class is in $R$. If $D$ is a $\mathbb{Q}$-Cartier divisor in $X$, we say that $D\cdot R> 0$ (respectively $D\cdot R=0$, etc.) if for every $\gamma\in R\setminus\{0\}$ we have $D\cdot \gamma> 0$ (respectively $D\cdot \gamma=0$, etc.).\\ 

A \textit{contraction} of $X$ is a surjective morphism $\varphi\colon X\to Y$ with connected fibers, where $Y$ is normal and projective. The push-forward of one-cycles defined by $\varphi$ is a surjective linear map $\varphi_{*}\colon \mathcal{N}_{1}(X)\to \mathcal{N}_{1}(Y)$. A contraction $\varphi\colon X\to Y$ is \textit{elementary} if $\rho_{X}-\rho_{Y}=1$. We denote by $\text{Exc}(\varphi)$ the \textit{exceptional locus} of $\varphi$, \textit{i.e.} the locus where $\varphi$ is not an isomorphism.

A contraction of $X$ is called \textit{$K_{X}$-negative} (or simply \textit{$K$-negative}) if the canonical divisor $K_{X}$ of $X$ is $\mathbb{Q}$-Cartier and $-K_{X}\cdot C>0$ for every curve $C$ contracted by $\varphi$. The \textit{relative cone} $\NE(\varphi)$ is the face of $\overline{\NE}(X)$ generated by classes of curves contracted by $\varphi$, hence $\NE(\varphi)=\overline{\NE}(X)\cap \text{Ker}(\varphi_{*})$.

If $Z\subseteq X$ is a closed subset and $i\colon Z\hookrightarrow X$ is the inclusion, we set
$\mathcal{N}_{1}(Z,X):=i_{*}(\mathcal{N}_{1}(Z))\subseteq \mathcal{N}_{1}(X)$. Equivalently, $\mathcal{N}_{1}(Z,X)$ is the linear subspace of $\mathcal{N}_{1}(X)$ spanned by classes of curves contained in $Z$. 

\subsection{Preliminaries on conic bundles} \label{conicbundle}

Let $X$ and $Y$ be smooth, projective varieties. A \textit{conic bundle} $f\colon X\to Y$ is a fiber type contraction whose fibers are isomorphic to plane conics. 


We refer the reader to \cite[$\S$4]{WIS} and to \cite{B} for equivalent definitions of conic bundles and their properties; \cite[$\S$7.1]{PAR} summarizes the relevant material on such contractions, giving in particular a survey on Mori and Mukai's results for Fano 3-folds of \cite{MM3, MM2, MM}. We set:
\begin{center} 
$\bigtriangleup_{f}:=\{y\in Y\mid f^{-1}(y) \text{ is singular}\}$.
\end{center}
We recall from \cite[$\S$1.7]{SARK} that $\bigtriangleup_{f}$ is a divisor of $Y$ that we call the \textit{discriminant divisor} of $f$, and by \cite[Proposition 1.8, (5.c)]{SARK} we have:
\begin{center} 
$\Sing{(\bigtriangleup_{f})}=\{y\in Y\mid f^{-1}(y) \text{ is non-reduced}\}$.
\end{center}
We conclude this subsection with some fundamental results on conic bundles. In particular thanks to the following theorem, we observe that Fano conic bundles can be easily characterized among contractions of smooth Fano varieties.

\begin{thm}[\cite{ANDO}, Theorem $3.1$] \label{thmWis}
Let $X$ be a smooth, projective variety, and let $f\colon X\to Y$ be a fiber type $K_{X}$-negative contraction with one-dimensional fibers. Then $Y$ is smooth and $f$ is a conic bundle.
\end{thm}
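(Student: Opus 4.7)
The plan is to obtain both conclusions simultaneously by producing a closed relative embedding of $X$ into a $\mathbb{P}^{2}$-bundle over $Y$. The first step is to pin down the fiberwise numerics. Stein factorization together with generic smoothness in characteristic zero forces a general fiber $F_{\eta}$ to be a smooth connected curve, and since $-K_{X}$ is $f$-ample, $-K_{X}\cdot F_{\eta}>0$. At a general point of $Y$ the map $f$ is smooth, so adjunction gives $\omega_{F_{\eta}}\cong \omega_{X}|_{F_{\eta}}$, and the inequality $\deg \omega_{F_{\eta}}=2g(F_{\eta})-2<0$ forces $F_{\eta}\cong \mathbb{P}^{1}$ and $-K_{X}\cdot F_{\eta}=2$.

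Next I would establish flatness of $f$. Since $f$ is a $K_{X}$-negative contraction from a smooth variety, Kawamata--Viehweg vanishing yields $R^{i}f_{*}\mathcal{O}_{X}=0$ for $i>0$, so $Y$ has rational singularities and in particular is Cohen--Macaulay. Combined with equidimensionality (every fiber is one-dimensional) and smoothness of $X$, the miracle flatness criterion gives flatness of $f$. Hence every fiber $F$ of $f$ satisfies $-K_{X}\cdot F=2$ and $p_{a}(F)=0$.

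The central and hardest step is smoothness of $Y$. The idea is deformation theoretic: for $y\in Y$ consider the scheme-theoretic fiber $F_{y}$, which is a connected one-dimensional subscheme of $X$ with $p_{a}(F_{y})=0$ and $-K_{X}\cdot F_{y}=2$. Since $X$ is smooth and $F_{y}$ has the expected dimension, $F_{y}$ is a local complete intersection in $X$, and a direct analysis of $N_{F_{y}/X}$ (using Serre duality on $F_{y}$ and the vanishing $h^{1}(F_{y},\mathcal{O}_{F_{y}})=0$) shows that $h^{1}(F_{y},N_{F_{y}/X})=0$ and $h^{0}(F_{y},N_{F_{y}/X})=\dim Y$. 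Therefore the Hilbert scheme component through $[F_{y}]$ is smooth of dimension $\dim Y$, and the evaluation map identifies an open neighbourhood of $[F_{y}]$ with a neighbourhood of $y$ in $Y$, which forces $Y$ to be smooth at $y$.

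Finally, with $Y$ smooth and $f$ flat of relative anticanonical degree $2$, cohomology and base change applied to the $f$-ample invertible sheaf $\mathcal{O}_{X}(-K_{X/Y})$ yield that $\mathcal{E}:=f_{*}\mathcal{O}_{X}(-K_{X/Y})$ is locally free of rank $h^{0}(F,\mathcal{O}_{F}(-K_{X}|_{F}))=3$, and the relative anticanonical map produces a closed embedding $X\hookrightarrow \mathbb{P}_{Y}(\mathcal{E})$ realizing $X$ as a relative divisor of degree $2$. Every fiber of $f$ is thus a plane conic, which completes the proof. The main obstacle is the smoothness of $Y$: the numerics and flatness are essentially formal consequences of $K$-negativity combined with Kollár-type vanishing, while smoothness of the base genuinely requires the deformation-theoretic input tailored to one-dimensional fibers.
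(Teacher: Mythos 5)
This statement is quoted in the paper as a known result of Ando (\cite{ANDO}, Theorem 3.1) and is not proved there, so your attempt can only be measured against the literature. Your architecture (general fiber is a smooth conic, flatness, smoothness of $Y$, then $f_{*}\omega_{X/Y}^{-1}$ locally free of rank $3$ giving an embedding into a $\mathbb{P}^{2}$-bundle) is indeed the shape of Ando's proof, and the first and last steps are fine as sketched. But there are two genuine gaps in the middle, and they feed into each other. First, the flatness step: miracle flatness requires the \emph{base} to be regular, not merely Cohen--Macaulay; the correct statement is ``$X$ Cohen--Macaulay, $Y$ regular, fibers equidimensional $\Rightarrow$ $f$ flat''. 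Cohen--Macaulayness of $Y$ is not a substitute: the quotient map $\mathbb{A}^{2}\to \mathbb{A}^{2}/(\pm 1)$ is equidimensional from a smooth source onto a normal Cohen--Macaulay surface and is not flat at the origin. So as written you cannot get flatness before you know $Y$ is smooth, while your smoothness argument uses flatness (to know that every scheme-theoretic fiber has $p_{a}=0$ and anticanonical degree $2$). This is a circularity, not a presentational issue.

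Second, the deformation-theoretic step is where the real content of the theorem lives, and it is asserted rather than proved. That $F_{y}$ is a local complete intersection does not follow from ``$X$ smooth and $F_{y}$ of expected dimension''; it would follow from flatness of $f$ together with regularity of $\mathcal{O}_{Y,y}$ (pull back a regular system of parameters), which is again what you are trying to prove. Even granting lci, $h^{1}(F_{y},N_{F_{y}/X})=0$ is not a formal consequence of $\deg\det N_{F_{y}/X}=0$ and $h^{1}(\mathcal{O}_{F_{y}})=0$: already for $F_{y}\cong\mathbb{P}^{1}$ the normal bundle could a priori contain an $\mathcal{O}(-a)$ summand with $a\geq 2$, and for reducible or non-reduced fibers the analysis is the crux of Ando's argument (he works with the irreducible components of the fibers, the ``lines'' of anticanonical degree $1$, and uses vanishing of $R^{1}f_{*}$ plus the theorem on formal functions to control $\mathfrak{m}_{y}/\mathfrak{m}_{y}^{2}$). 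Finally, even with a smooth Hilbert-scheme component of dimension $\dim Y$, the injective morphism $Y\to\operatorname{Hilb}$ does not force $Y$ to be smooth without checking injectivity on tangent spaces (compare the normalization of a cuspidal curve mapping injectively into a smooth surface). So the proposal identifies the right milestones but does not actually clear the hardest one, and the order in which flatness and smoothness are established must be reorganized for the argument to close.
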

Let $X$ be a smooth Fano variety and let $f\colon X\to Y$ be a contraction. By \cite[$\S 2.5$] {CAS3}, $\NE{(Y)}$ is the linear projection of $\NE{(X)}$ from $\NE{(f)}$, thus using that every extremal ray of $\NE{(Y)}$ comes via $f_{*}$ from an extremal ray of $\NE{(X)}$, we can give a reformulation of \cite[Proposition $4.3$]{WIS}:
\begin{proposition} \label{PropWis}
Let $f\colon X\to Y$ be a Fano conic bundle. Assume that $Y$ is not Fano. Let $R$ be an extremal ray of $Y$ such that $-K_{Y}\cdot R\leq 0$. Then $\Lo{(R)}\subseteq \Sing{(\bigtriangleup_{f})}$. 
\end{proposition}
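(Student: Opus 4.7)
My plan is to apply Sarkisov's formula for conic bundles (\cite[Proposition~1.8]{SARK}), which gives in $\Nu(Y)$ the identity
$$f_*(K_X^2) \;\equiv\; -4K_Y - \bigtriangleup_f,$$
and to combine it with the ampleness of $-K_X$. The hypothesis that $Y$ is not Fano is there only to make the statement non-vacuous: if $-K_Y$ were ample then no extremal ray of $Y$ could satisfy $-K_Y\cdot R\leq 0$.

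First I would pick an irreducible curve $C\subset Y$ with $[C]\in R$, and denote by $S$ a representative of the effective $2$-cycle $f^*[C]$ on $X$, whose support is $f^{-1}(C)$. Intersecting the Sarkisov identity with $C$ and using the projection formula,
$$(4K_Y+\bigtriangleup_f)\cdot C \;=\; -\,f_*(K_X^2)\cdot C \;=\; -(K_X)^2\cdot f^*[C] \;=\; -(K_X|_S)^2.$$
Because $-K_X$ is ample and $[S]$ is a nonzero effective $2$-cycle, we have $(K_X|_S)^2=((-K_X)|_S)^2>0$. Combined with $-K_Y\cdot C\leq 0$ (so $4K_Y\cdot C\geq 0$) this forces $\bigtriangleup_f\cdot C<0$, whence $C\subseteq\mathrm{supp}(\bigtriangleup_f)$.

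The main obstacle will be to upgrade this conclusion to $C\subseteq\Sing(\bigtriangleup_f)$. For this I would argue by contradiction: if $C\not\subseteq\Sing(\bigtriangleup_f)$, then over the generic point of $C$ the fiber $f^{-1}(y)=\ell_1(y)+\ell_2(y)$ is a reduced reducible conic, and the two components determine an \'etale double cover of $\bigtriangleup_f\setminus\Sing(\bigtriangleup_f)$ near $C$. Passing to a resolution of $C$ and to this induced double cover, and base-changing $f$, the preimage of $C$ splits as $S_1\cup S_2$ with each $S_i$ a $\mathbb{P}^1$-bundle on which $-K_X$ restricts to a relatively ample divisor of fiber-degree one. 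A numerical computation of $(K_X|_{S_i})^2$ using these ruled-surface structures, combined with the Sarkisov identity above and the hypothesis $-K_Y\cdot R\leq 0$, should then contradict the inequality obtained previously. This argument reproduces the technical core of \cite[Proposition~4.3]{WIS}, reformulated here via the identification of extremal rays of $\NE(Y)$ with lifts from $\NE(X)$ under $f_*$ provided by \cite[\S~2.5]{CAS3}.
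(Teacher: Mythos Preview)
The paper does not prove this proposition: it is presented as a reformulation of \cite[Proposition~4.3]{WIS}, the only observation added being that every extremal ray of $\NE(Y)$ is the $f_*$-image of an extremal ray of $\NE(X)$ (from \cite[\S 2.5]{CAS3}). So there is no independent argument in the paper to compare against beyond the citation.

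Your first paragraph is correct and clear. The identity $f_*K_X^2\equiv -4K_Y-\bigtriangleup_f$, the projection formula, and the positivity $(-K_X)^2\cdot f^*[C]>0$ coming from ampleness of $-K_X$ give $\bigtriangleup_f\cdot C<0$ for any irreducible curve $C$ with $[C]\in R$, hence $C\subseteq\bigtriangleup_f$. This already recovers half of Wi\'sniewski's conclusion.

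The second paragraph, however, is not a proof but a programme. You pass to the \'etale double cover, split the surface into two $\mathbb{P}^1$-bundles $S_1,S_2$ on which $-K_X$ has fiber degree one, and then assert that ``a numerical computation of $(K_X|_{S_i})^2$ \dots should then contradict the inequality obtained previously'' --- without saying what the computation yields or which inequality is violated. Knowing only that $-K_X|_{S_i}$ is ample of fiber degree $1$ on a ruled surface gives no upper bound on $(-K_X|_{S_i})^2$, so it is not clear how this alone can contradict $4(-K_Y)\cdot C=(-K_X)^2\cdot S+\bigtriangleup_f\cdot C\leq 0$; after base change the same identity holds with all terms doubled, and you are back where you started. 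Whatever mechanism Wi\'sniewski actually uses to exclude the reducible--reduced case, you have not reproduced it here. If this is meant to stand on its own rather than as a pointer back to \cite{WIS}, the contradiction in the second step must be made explicit.
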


By the above proposition we obtain the following:
\begin{corollary} \label{Fano}
Let $f\colon X\to Y$ be a Fano conic bundle. If $f$ does not have non-reduced fibers, then $Y$ is  Fano.
\end{corollary}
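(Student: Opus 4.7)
The plan is a short proof by contradiction, combining Proposition \ref{PropWis} with the fact that $\Sing(\bigtriangleup_{f})$ precisely parametrizes the non-reduced fibers of $f$. First I would suppose $Y$ is not Fano and aim to produce an extremal ray $R$ of $\overline{\NE}(Y)$ on which $-K_{Y}$ is non-positive, so that Proposition \ref{PropWis} applies.

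The existence of such an extremal ray follows from two ingredients. Since $X$ is a smooth Fano variety, $\overline{\NE}(X)$ is rational polyhedral by the Cone Theorem, and because $f_{*}\colon \mathcal{N}_{1}(X)\to \mathcal{N}_{1}(Y)$ is surjective we have $\overline{\NE}(Y)=f_{*}(\overline{\NE}(X))$, which is therefore also polyhedral, spanned by the images of the extremal rays of $X$. On the other hand, $Y$ being non-Fano means $-K_{Y}$ is not ample, and Kleiman's criterion provides some nonzero class $\gamma\in \overline{\NE}(Y)$ with $-K_{Y}\cdot\gamma\leq 0$. Writing $\gamma$ as a non-negative combination of generators of the (finitely many) extremal rays of $\overline{\NE}(Y)$, at least one such extremal ray $R$ must satisfy $-K_{Y}\cdot R\leq 0$.

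Now I would invoke Proposition \ref{PropWis} to conclude that $\Lo(R)\subseteq \Sing(\bigtriangleup_{f})$. Since $R$ is an extremal ray it contains the class of at least one curve, so $\Lo(R)$ is nonempty. By the characterization recalled from \cite{SARK}, namely $\Sing(\bigtriangleup_{f})=\{y\in Y\mid f^{-1}(y)\text{ is non-reduced}\}$, the hypothesis that $f$ has no non-reduced fibers forces $\Sing(\bigtriangleup_{f})=\emptyset$, a contradiction. Hence $Y$ must be Fano.

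The only mildly delicate point is justifying the existence of the extremal ray $R$ rather than merely a class with $-K_{Y}\cdot\gamma\leq 0$; this is not a real obstacle once one remembers that $\overline{\NE}(Y)$ inherits polyhedrality from the Fano $X$ through the surjection $f_{*}$. The rest is a direct application of the two results already stated in the excerpt.
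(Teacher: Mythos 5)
Your proof is correct and follows essentially the same route as the paper: the paper likewise notes that $\NE(Y)$ is closed (hence polyhedral, being the image of $\NE(X)$ under $f_*$), reduces by Kleiman's criterion to checking $-K_Y\cdot R>0$ on extremal rays, and rules out a non-positive ray via Proposition \ref{PropWis} together with the characterization of $\Sing(\bigtriangleup_f)$ as the non-reduced-fiber locus. Your extra care in extracting an extremal ray from a non-positive class is exactly the content the paper compresses into its citation of closedness plus Kleiman.
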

\begin{proof} By \cite[Lemma $2.6$]{CAS3}, we know that $\NE{(Y)}$ is closed, hence by Kleiman's criterion it is enough to observe that $-K_{Y}\cdot R>0$ for every extremal ray $R$ of $Y$. This assertion follows by Proposition \ref{PropWis}. 
\end{proof}
\begin{remark} In particular, we recall that Koll$\acute{\text{a}}$r, Miyaoka, and Mori proved that if $f\colon X\to Y$ is a surjective smooth morphism between smooth varieties and where $X$ is Fano, then $Y$ is also Fano (see \cite[Corollary 2.9]{K}). Under the same assumption of smoothness of the morphism $f$, in \cite[Theorem 1.1]{FUJ} Fujino and Gongyo showed that if $X$ is weak Fano then so is $Y$.
\end{remark}
\begin{remark} In \cite{EJIRI}, Ejiri posed the following question: let $f\colon X\to Y$ be an equidimensional contraction of a Fano manifold. Assume that $f$ is not a smooth morphism, but its fibers have some mild singularities, for example semi-log canonical singularities  (see \cite[Definition 1.3]{Alexeev} for the definition of semi-log canonical singularities). Does $-K_{Y}$ have some good positivity properties? When $f\colon X\to Y$ is a Fano conic bundle, Corollary \ref{Fano} gives a positive answer to Ejiri's question. Indeed, the fibers of the conic bundle $f$ have semi-log canonical singularities if and only if they are reduced (see \cite[Example 1.4]{Alexeev}). 
\end{remark}
\subsection{Preliminaries on Special MMP's for divisors in Fano manifolds} 

We refer the reader to \cite{HU, KOLLAR} for background on the Minimal Model Program (MMP) on Mori dream spaces, and to \cite{CAS3, CAS2} for the specific properties that we will use on Fano varieties. 

By \cite{BIRKAR,HU}, we know that it is possible run a MMP for any divisor on a Fano manifold. An important remark is that when $X$ is Fano, there is always a suitable choice of a MMP where all involved extremal rays have positive intersection with the anticanonical divisor (see for instance \cite[Proposition 2.4]{CAS1}). In this last case, according to \cite{CAS1}, we call the MMP a \textit{Special MMP}.\\

We give now a technical lemma that will be needed in Section $4$. It is obtained adapting similar techniques of \cite[Lemma 2.1]{CAS2} to our specific situation. We first recall the following:

\begin{proposition}[\cite{CAS1}, Proposition $2.5$] \label{MMP}
Let $X$ be a Fano manifold and $D\subset X$ a prime divisor. Suppose that $\codim{\mathcal{N}_{1}(D,X)}=s>0$.

Then there exist $s-1$ pairwise disjoint smooth prime divisors $G_{i}\subset X$, with $i=1,\dots,s-1$, such that every $G_{i}$ is a $\mathbb{P}^{1}$-bundle with $G_{i}\cdot g_{i}=-1$, where $g_{i}\subset G_{i}$ is a fiber; moreover $D\cdot g_{i}>0$ and $[g_{i}]\notin \mathcal{N}_{1}(D,X)$. In particular $G_{i}\cap D\neq \emptyset$ and $G_{i}\neq D$. 
\end{proposition}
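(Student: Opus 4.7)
The plan is to run a suitable Special MMP on $X$ guided by $D$ and to extract the divisors $G_{i}$ one by one from its divisorial steps. Since $X$ is smooth and Fano it is a Mori dream space, and we may choose the MMP so that every contracted extremal ray is $K_X$-negative and, moreover, has strictly positive intersection with $D$ (hence $D$ is never contracted). Call this sequence
\[
X = X_{0} \dashrightarrow X_{1} \dashrightarrow \cdots \dashrightarrow X_{m},
\]
and let $D_{j}\subseteq X_{j}$ be the strict transform of $D$ at each stage. The key bookkeeping device is
\[
c_{j} := \codim_{\mathcal{N}_{1}(X_{j})} \mathcal{N}_{1}(D_{j},X_{j}),
\]
which starts at $c_{0}=s$. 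Flips leave $c_{j}$ unchanged, and a divisorial contraction either leaves it unchanged (when the fiber-class of its exceptional divisor already lies in $\mathcal{N}_{1}(D_{j-1},X_{j-1})$) or drops it by exactly one. The final step of the MMP is of fiber type and forces $c_{m}=0$, since on the resulting Mori fiber space the class of $D$ is horizontal; so exactly $s-1$ of the intermediate steps are \emph{productive} divisorial contractions, each with exceptional divisor $\widehat{G}_{i}\subseteq X_{j_{i}-1}$ whose fiber class is not in $\mathcal{N}_{1}(D_{j_{i}-1},X_{j_{i}-1})$.

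The key geometric input is that each productive divisorial contraction is the inverse of a smooth blow-up. This is the technical core of the lemma: using the smoothness and Fano hypothesis on $X$, together with Wi\'sniewski's length inequality and the Ando--Wi\'sniewski classification of $K$-negative divisorial contractions, one shows that each $\widehat{G}_{i}$ is a smooth $\mathbb{P}^{1}$-bundle over a smooth base, with $\widehat{G}_{i}\cdot \widehat{g}_{i}=-1$ on a fiber $\widehat{g}_{i}$. Then, by induction along the MMP, all the steps occurring before the $i$-th productive contraction affect only loci that are either disjoint from $\widehat{G}_{i}$ or contained in previously produced exceptional divisors $\widehat{G}_{k}$ with $k<i$. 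Consequently the strict transform $G_{i}\subseteq X$ of $\widehat{G}_{i}$ is isomorphic to $\widehat{G}_{i}$, hence a smooth $\mathbb{P}^{1}$-bundle with $G_{i}\cdot g_{i}=-1$ on a fiber $g_{i}$, and the collection $G_{1},\ldots,G_{s-1}$ is pairwise disjoint in $X$.

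Finally, $D\cdot g_{i}>0$ because $D$ was chosen strictly positive on the contracted ray and this intersection is preserved under the birational maps, which are isomorphisms in codimension one away from the relevant exceptional loci. Also $[g_{i}]\notin \mathcal{N}_{1}(D,X)$ because the $i$-th productive step was designed precisely so that its contracted ray represents a class outside the push-forward of $\mathcal{N}_{1}(D,X)$. From these two properties, $G_{i}\neq D$ (otherwise the fibers of a $\mathbb{P}^{1}$-bundle structure on $D$ would have class in $\mathcal{N}_{1}(D,X)$) and $G_{i}\cap D\neq\emptyset$ (any fiber $g_{i}\subseteq G_{i}$ satisfies $D\cdot g_{i}>0$, which forces set-theoretic intersection).

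The main obstacle is establishing the smooth $\mathbb{P}^{1}$-bundle structure of each $\widehat{G}_{i}$: a priori the intermediate models $X_{j}$ are only $\mathbb{Q}$-factorial with mild singularities, so the geometry of the contracted divisor has to be controlled carefully. This is where the smoothness of the original $X$ and the Special nature of the MMP (both $K_{X}$-negativity and $D$-positivity on the chosen rays) combine, via Wi\'sniewski's length-one estimate for the relevant extremal rays, to force the Ando--Wi\'sniewski picture in every productive step.
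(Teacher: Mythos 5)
Your proposal is correct and follows essentially the same route as the proof this paper invokes (the statement is quoted from \cite{CAS1}, Proposition 2.5, and the paper only recalls its structure in the proof of Lemma \ref{iperpiano}): a special MMP for $-D$ with $K$-negative, $D$-positive rays, bookkeeping of $\codim\mathcal{N}_{1}(D_{j},X_{j})$, and identification of the ``productive'' divisorial steps as smooth blow-ups whose exceptional divisors are disjoint from earlier exceptional loci and transform isomorphically back to $X$. The only cosmetic difference is that the one-dimensionality of the fibers in a productive step comes most directly from $D_{j}\cdot R_{j}>0$ together with $R_{j}\not\subset\mathcal{N}_{1}(D_{j},X_{j})$ (a two-dimensional fiber would meet $D_{j}$ in a curve whose class lies in both), rather than from a length inequality; Ando's theorem then gives the smooth $\mathbb{P}^{1}$-bundle structure exactly as you describe.
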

\begin{lemma} \label{iperpiano} In the situation of Proposition \ref{MMP}, we can assume that $\mathcal{N}_{1}(D,X)$, $\mathbb{R}[g_{1}]$, $\dots, \mathbb{R}[g_{s-1}]$ are in direct sum, so that $\mathcal{N}_{1}(D,X)\oplus \mathbb{R}[g_{1}]\oplus\dots \oplus \mathbb{R}[g_{s-1}]$ is a hyperplane in $\mathcal{N}_{1}(X)$.
\end{lemma}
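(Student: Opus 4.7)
The plan is to run the Special MMP on $D$ from the proof of Proposition~\ref{MMP} and to track the behaviour of $\mathcal{N}_1$ of the proper transform of $D$ along it. That MMP produces a sequence of birational maps
\[
 X = X_0 \dashrightarrow X_1 \dashrightarrow \cdots \dashrightarrow X_{s-1}
\]
consisting of some small modifications together with exactly $s-1$ divisorial contractions, whose exceptional divisors are the proper transforms of $G_1,\dots,G_{s-1}$; every $X_j$ is a Mori dream space. Let $D_j\subset X_j$ denote the proper transform of $D$. Since small modifications induce canonical isomorphisms on $\mathcal{N}_1$, one obtains a well-defined composite push-forward $\Phi\colon \mathcal{N}_1(X)\to \mathcal{N}_1(X_{s-1})$.

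First I would verify that $[g_1],\dots,[g_{s-1}]$ are linearly independent and that $\ker\Phi = \mathbb{R}[g_1]\oplus\cdots\oplus\mathbb{R}[g_{s-1}]$: each divisorial step kills only the line spanned by the class of a fibre of the corresponding $\mathbb{P}^1$-bundle, and small modifications contribute nothing to the kernel. Next I would track $\codim \mathcal{N}_1(D_j,X_j)$; by construction of the Special MMP, each divisorial step drops this codimension by exactly one (this is the codimension bookkeeping already used in \cite[Proposition~2.5]{CAS1}), while small modifications preserve it. Starting from $\codim \mathcal{N}_1(D,X) = s$ and performing the $s-1$ divisorial contractions, I obtain $\codim \mathcal{N}_1(D_{s-1},X_{s-1}) = 1$, equivalently
\[
 \dim \mathcal{N}_1(D_{s-1},X_{s-1}) \;=\; \rho_X-s \;=\; \dim \mathcal{N}_1(D,X).
\]

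The key observation is that $\Phi$ sends $\mathcal{N}_1(D,X)$ \emph{onto} $\mathcal{N}_1(D_{s-1},X_{s-1})$: any irreducible curve $C\subset D_{s-1}$ has a proper transform $\widetilde C\subset D$ (via the dominant rational map $D\dashrightarrow D_{s-1}$), whose push-forward is $[C]$. Since source and target have the same finite dimension, $\Phi\big|_{\mathcal{N}_1(D,X)}$ is an isomorphism, in particular injective, so $\mathcal{N}_1(D,X)\cap \ker\Phi = \{0\}$. Combining this with the description of $\ker\Phi$ yields the asserted direct sum
\[
 \mathcal{N}_1(D,X)\,\oplus\, \mathbb{R}[g_1]\,\oplus\,\cdots\,\oplus\, \mathbb{R}[g_{s-1}],
\]
of dimension $(\rho_X-s)+(s-1)=\rho_X-1$, which is a hyperplane in $\mathcal{N}_1(X)$.

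The step I expect to require the most care is the surjectivity of $\Phi\big|_{\mathcal{N}_1(D,X)}$, together with the consistent treatment of $D_j$ through each small modification: one must verify that $D_j$ remains an irreducible divisor whose $\mathcal{N}_1$ behaves as claimed under the successive flips, which is precisely the kind of bookkeeping carried out in \cite[Lemma~2.1]{CAS2}, on which our statement is modelled.
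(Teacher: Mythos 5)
Your overall strategy is genuinely different from the paper's: you try to obtain the direct sum globally, by showing that the push-forward $\Phi$ along the whole MMP restricts to an isomorphism of $\mathcal{N}_{1}(D,X)$ onto $\mathcal{N}_{1}(D_{\mathrm{end}},X_{\mathrm{end}})$ via a dimension count. The paper instead argues coefficient by coefficient: given $\sum_{j}\lambda_{j}[g_{j}]\in\mathcal{N}_{1}(D,X)$, it pushes this class forward only as far as $X_{i_{s-1}}$, where it becomes $\lambda_{s-1}[g_{s-1}]$ because all earlier $g_{j}$ have been contracted; by \cite[Lemma 2.1]{CAS2} this class lies in $\mathcal{N}_{1}(D_{i_{s-1}},X_{i_{s-1}})$, while $[g_{s-1}]\in R_{i_{s-1}}\not\subset\mathcal{N}_{1}(D_{i_{s-1}},X_{i_{s-1}})$, forcing $\lambda_{s-1}=0$; then one descends inductively. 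That argument needs only the \emph{inclusion} of the push-forward of $\mathcal{N}_{1}(D,X)$ into $\mathcal{N}_{1}(D_{i_{j}},X_{i_{j}})$, not any equality of dimensions.

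The gap in your version is the assertion that the special MMP for $-D$ consists of small modifications together with \emph{exactly} $s-1$ divisorial contractions, equivalently that ``each divisorial step drops the codimension of $\mathcal{N}_{1}(D_{j},X_{j})$ by exactly one.'' The bookkeeping in \cite[Proposition 2.5]{CAS1} only guarantees that the codimension drops by one at a divisorial contraction whose contracted ray $R_{j}$ is \emph{not} contained in $\mathcal{N}_{1}(D_{j},X_{j})$; a divisorial contraction with $R_{j}\subset\mathcal{N}_{1}(D_{j},X_{j})$ is not excluded by the construction, preserves the codimension, yet still lowers the Picard number by one and contributes a line to $\ker\Phi$ that is not spanned by any $[g_{j}]$. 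If such a step occurs, then $\rho_{X_{\mathrm{end}}}<\rho_{X}-(s-1)$, hence $\dim\mathcal{N}_{1}(D_{\mathrm{end}},X_{\mathrm{end}})\leq\rho_{X_{\mathrm{end}}}-1<\rho_{X}-s=\dim\mathcal{N}_{1}(D,X)$, so $\Phi\big|_{\mathcal{N}_{1}(D,X)}$ cannot be injective; worse, the nonzero subspace $\mathcal{N}_{1}(D,X)\cap\ker\Phi$ is a priori an arbitrary subspace of the span of \emph{all} contracted fibre classes, so you cannot conclude that it avoids $\mathbb{R}[g_{1}]\oplus\cdots\oplus\mathbb{R}[g_{s-1}]$. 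A secondary, more repairable issue is the surjectivity of $\Phi\big|_{\mathcal{N}_{1}(D,X)}$ onto $\mathcal{N}_{1}(D_{\mathrm{end}},X_{\mathrm{end}})$: a curve of $D_{\mathrm{end}}$ contained in a flipped locus need not be the proper transform of a curve of $D$, so this requires more than the transform argument you give. Both difficulties disappear if you localize the argument as the paper does, using only $[g_{j}]\notin\mathcal{N}_{1}(D_{i_{j}},X_{i_{j}})$ at each relevant stage.
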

\begin{proof}  
As in the proof of Proposition \ref{MMP} (see \cite[Proposition 2.5]{CAS1} for details), we consider a special MMP for $-D$:
\begin{center}$X=X_{0}\stackrel{\sigma_{0}}{\dashrightarrow} X_{1} \dashrightarrow \cdot\cdot\cdot \ \dashrightarrow X_{k-1} \stackrel{\sigma_{k-1}}{\dashrightarrow} X_{k}$
\end{center} 
where for every $i=0,\dots,k-1$, $\sigma_{i}$ is either the divisorial contraction, or the flip, of an extremal ray $R_{i}$ of $X_{i}$. 
By the proof of the same proposition, there are $i_{1},\dots, i_{s-1}$ with $0\leq i_{1}<\dots<i_{s-1}\leq k-1$ such that for every $j\in \{1,\dots,s-1\}$ we have $R_{i_{j}}\nsubseteq \mathcal{N}_{1}(D_{i_{j}}, X_{i_{j}})$, $\sigma_{i_{j}}\colon X_{i_{j}}\longrightarrow X_{i_{j+1}}$ is a divisorial contraction, $\Exc{(\sigma_{i_{j}})}$ does not intersect the loci of the birational maps $\sigma_{l}$ for $l<i$, and $G_{j}$ is the transform of $\Exc{(\sigma_{i_{j}})}$ in $X$.   

Since $G_{j}\cdot g_{j}=-1$ and $G_{i}\cdot g_{j}= 0$ for $i\neq j$, it is easy to see that the classes $[g_{1}],\dots, [g_{s-1}]$ are linearly independent.
We have to prove that $\mathcal{N}_{1}(D,X)\cap (\mathbb{R}[g_{1}] \oplus \dots \oplus \mathbb{R}[g_{s-1}])=\{0\}$. 
Assume that $\lambda_{1}[g_{1}]+\dots+\lambda_{s-1}[g_{s-1}] \in \mathcal{N}_{1}(D,X)$ for some $\lambda_{1}, \dots, \lambda_{s-1}\in \mathbb{R}$.

Set $\Gamma:= \lambda_{1} g_{1}+\dots+\lambda_{s-1} g_{s-1}$ and consider the map $X\dashrightarrow X_{i_{s-1}}$. For every $j<i_{s-1}$, since $g_{s-1}$ is contained in the open subset where $\sigma_{j}$ is an isomorphism, the transform of $\Gamma$ in $X_{i_{s-1}}$ is $\Gamma_{i_{s-1}}=\lambda_{s-1} g_{s-1}$ (for simplicity, we still denote by $g_{s-1}$ its strict transform along the MMP). By \cite[Lemma 2.1]{CAS2} we get $\lambda_{s-1}[g_{s-1}]\in N_{1}(D_{i_{s-1}},X_{i_{s-1}})$. By the construction of the MMP we have $R_{i_{s-1}}\not\subset \mathcal{N}_{1}(D_{i_{s-1}},X_{i_{s-1}})$ and $[g_{s-1}]\in R_{i_{s-1}}$, hence $[g_{s-1}]\notin \mathcal{N}_{1}(D_{i_{s-1}},X_{i_{s-1}})$, and $\lambda_{s-1}=0$. 

We can proceed in this way, repeating the same method for every $j< i_{k}$ where $k\leq s-2$, until $k=1$. We deduce that all coefficients $\lambda_{k}$ in $\Gamma$ are equal to zero, hence our statement. 
\end{proof}
We conclude this subsection by recalling the last results that will be needed in the proof of our main Theorem.
\begin{remark} [\cite{CAS1}, Remark 3.1.3, (3)] \label{intersezione} Let $X$ be a projective manifold, $G\subset X$ a smooth prime divisor which is a $\mathbb{P}^{1}$-bundle with fiber $g\subset G$, and $E\subset X$ a prime divisor with $E\cdot g>0$. Then for every irreducible curve $C\subset G$ we have $C\equiv \lambda g+\mu C^{\prime}$, where $C^{\prime}$ is an irreducible curve contained in $G\cap E$, $\lambda, \mu \in \mathbb{R}$, and $\mu\geq 0$.
\end{remark}

\begin{proposition} [\cite{CAS1}, Lemma 3.2.25] \label{smooth morphism} Let $G$ be a smooth projective variety and $\pi\colon G\to W$ a $\mathbb{P}^{1}$-bundle with fiber $g\subset G$. Moreover, let $f\colon G\to Y$ be a morphism onto a smooth projective variety $Y$, such that $\dim{f(g)}=1$. Suppose that there exists a prime divisor $A\subset Y$ such that $\mathcal{N}_{1}(A,Y)\subsetneq \mathcal{N}_{1}(Y)$ and $f^{*}(A)\cdot g> 0$. Then there exists a commutative diagram: 
$$
\begin{array}{ccc}
G & \stackrel{f}{\longrightarrow} & Y \\
\pi \downarrow & & \downarrow \xi \\
W & \longrightarrow & Y^{\prime}
\end{array}
$$ 
where $Y^{\prime}$ is smooth and $\xi$ is a smooth $\mathbb{P}^{1}$-fibration.
\end{proposition}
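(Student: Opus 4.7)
The plan is to identify the ray of $\NE(Y)$ generated by $f_{*}[g]$ as a $K_{Y}$-negative extremal ray, realize $\xi$ as its elementary contraction, verify via Ando's theorem (Theorem \ref{thmWis}) that $\xi$ is a smooth $\mathbb{P}^{1}$-fibration, and finally factor the composition $\xi\circ f$ through $\pi$ to produce $W\to Y^{\prime}$.

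First I would observe that as $w$ varies in $W$, the images $C_{w}:=f(g_{w})$ form a covering family of irreducible rational curves in $Y$: indeed $g_{w}\cong \mathbb{P}^{1}$ and $\dim f(g_{w})=1$, so $f|_{g_{w}}$ is finite surjective onto $C_{w}$, and each $[C_{w}]$ is a positive multiple of $f_{*}[g]$, so these classes all span a single ray $R\subset \NE(Y)$. The projection formula applied to $f^{*}(A)\cdot g>0$ gives $A\cdot C_{w}>0$; combined with $\mathcal{N}_{1}(A,Y)\subsetneq \mathcal{N}_{1}(Y)$, this is the key numerical input for showing that $R$ is extremal. The technical heart is then to prove that $R$ is a $K_{Y}$-negative extremal ray: since $\{C_{w}\}$ is a covering family of rational curves of bounded degree, bend-and-break together with Mori's cone theorem produces an extremal rational curve whose class is proportional to $f_{*}[g]$, and the hypothesis on $A$ prevents this ray from being broken apart by classes of curves contained in $A$. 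The Contraction Theorem then yields $\xi\colon Y\to Y^{\prime}$ contracting precisely $R$.

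To upgrade $\xi$ to a smooth $\mathbb{P}^{1}$-fibration, I would show that $\xi$ has all fibers of dimension $1$ (each fiber of $\xi$ contains some $C_{w}$, and $A$-positivity together with the covering property bounds the fiber dimension from above), and then invoke Theorem \ref{thmWis} to conclude that $Y^{\prime}$ is smooth and $\xi$ is a conic bundle; excluding reducible and non-reduced fibers would require a further step, using again that $A\cdot C_{w}>0$ and the rationality of $C_{w}$ to rule out the presence of components with zero $A$-intersection or non-reduced structure. For the commutative square, the composition $\xi\circ f\colon G\to Y^{\prime}$ sends each fiber $g_{w}$ of $\pi$ into $\xi(C_{w})$, which is a single point; since $\pi$ is a proper surjection with connected fibers, the rigidity lemma yields a unique factorization through $\pi$, giving the desired $W\to Y^{\prime}$.

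I expect the hard part to be the extremality of $R$ together with the absence of reducible fibers for $\xi$: these are exactly the places where the hypothesis $\mathcal{N}_{1}(A,Y)\subsetneq \mathcal{N}_{1}(Y)$ is used in a non-trivial way, since it is what separates the class $f_{*}[g]$ from the numerical classes of curves swept out in $A$, and it is what forces $\xi$ to be a genuine $\mathbb{P}^{1}$-fibration rather than a potentially singular conic bundle.
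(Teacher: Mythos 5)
This proposition is quoted from \cite{CAS1} (Lemma 3.2.25) and the paper gives no proof of it, so the comparison is against the cited source rather than anything in this text. Your skeleton --- realize $\mathbb{R}_{\geq 0}f_{*}[g]$ as a $K_{Y}$-negative extremal ray, contract it, apply Theorem \ref{thmWis}, and factor $\xi\circ f$ through $\pi$ by rigidity --- is the right one, and the final rigidity step is fine. But two of the steps you flag as ``the technical heart'' are asserted by a mechanism that does not work, and a third is left open.

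First, extremality. Bend-and-break and the Cone Theorem produce \emph{some} $K_{Y}$-negative extremal rational curves, but give no reason whatsoever that any of their classes is proportional to $f_{*}[g]$; a covering family of free curves need not sit on an extremal ray, and since $Y$ is not assumed Fano you cannot even argue by finiteness of rays. The correct mechanism is the numerical decomposition forced by the $\mathbb{P}^{1}$-bundle structure upstairs: because $f^{*}(A)\cdot g>0$, every irreducible curve of $G$ is numerically $\lambda g+\mu C'$ with $\mu\geq 0$ and $C'\subset f^{*}(A)$ (this is exactly the content of Remark \ref{intersezione}, applied fiberwise over curves in $W$), and pushing forward along the surjection $f$ gives $\NE(Y)\subseteq \mathbb{R}_{\geq 0}[f(g)]+i_{*}\NE(A)$. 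Together with $\mathcal{N}_{1}(A,Y)\subsetneq\mathcal{N}_{1}(Y)$ this shows $\mathcal{N}_{1}(A,Y)$ is a hyperplane, $[f(g)]\notin\mathcal{N}_{1}(A,Y)$, and $\mathbb{R}_{\geq 0}[f(g)]$ is extremal (and $K_{Y}$-negative since a general $f(g_{w})$ is free). This is where the hypothesis on $A$ actually enters; your proposal names the hypothesis but supplies no argument that uses it. Second, equidimensionality of $\xi$: the statement ``$A$-positivity bounds the fiber dimension'' needs the concrete argument that a fiber of dimension $\geq 2$ would meet the divisor $A$ in a \emph{curve}, whose class would then lie in $\mathbb{R}_{\geq 0}[f(g)]\cap\mathcal{N}_{1}(A,Y)=\{0\}$, a contradiction. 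Third, you explicitly defer the exclusion of reducible and non-reduced conic fibers; this is a genuine remaining gap and is not a routine consequence of $A\cdot f(g)>0$ (every component of every fiber already meets $A$ positively, so positivity alone excludes nothing). The standard route compares anticanonical degrees: every fiber $F$ of $\xi$ contains some $C_{w}=f(g_{w})_{\mathrm{red}}$ with $f_{*}[g_{w}]$ independent of $w$, $-K_{Y}\cdot F=2$, and a general $C_{w}$ is free with $-K_{Y}\cdot C_{w}\geq 2$; one must then control the degree of $f|_{g_{w}}$ onto its image at special $w$ to conclude $F=C_{w}$ is irreducible and reduced, hence a smooth conic. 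As it stands the proposal is a correct plan with the two decisive verifications missing or replaced by an argument that would fail.
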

The next result concerns contractions of smooth, projective, Fano varieties which are products of Fano manifolds. This is probably well-known, we include a proof for lack of references.
\begin{lemma} \label{product_contraction} Let $X_{1}$, $X_{2}$ be smooth, projective, Fano varieties, and $X= X_{1}\times X_{2}$. Let $f\colon X\to Y$ be a contraction. Then $Y\cong Y_{1}\times Y_{2}$, and $f=(f_{1},f_{2})$, where $f_{1}\colon X_{1}\to Y_{1}$, $f_{2}\colon X_{2}\to Y_{2}$ are contractions.
\end{lemma}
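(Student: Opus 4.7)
The plan is to use the product structure of $X$ to decompose the Mori cone $\NE(X)$, transfer that decomposition to the face $\NE(f)$, and then reconstruct $f$ as the product of contractions of the two factors.

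First, I would set up the numerical decomposition. Let $p_{i}\colon X\to X_{i}$ be the two projections, and identify $\mathcal{N}_{1}(X_{i})$ with its image $\mathcal{N}_{1}(X_{i},X)$ via the inclusion of $X_{i}$ into $X$ as a fiber of the opposite projection. For any one-cycle $\gamma$ on $X$, the K\"unneth-type identity $[\gamma]=(p_{1})_{*}[\gamma]+(p_{2})_{*}[\gamma]$ yields $\mathcal{N}_{1}(X)=\mathcal{N}_{1}(X_{1})\oplus \mathcal{N}_{1}(X_{2})$, and at the level of effective classes $\NE(X)=\NE(X_{1})+\NE(X_{2})$, a direct sum of subcones sitting in complementary linear subspaces.

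Next, I would split the relative cone. Set $F_{i}:=\NE(f)\cap \NE(X_{i})$; this is a face of $\NE(X_{i})$ since $\NE(f)$ is a face of $\NE(X)$. Given $\gamma\in \NE(f)$, write $\gamma=\gamma_{1}+\gamma_{2}$ with $\gamma_{i}\in \NE(X_{i})$; because $\NE(f)$ is a face, each $\gamma_{i}$ lies in $\NE(f)$, and hence in $F_{i}$, so $\NE(f)=F_{1}+F_{2}$. As $X_{i}$ is Fano, the face $F_{i}$ is automatically $K_{X_{i}}$-negative, and the Contraction Theorem supplies a contraction $f_{i}\colon X_{i}\to Y_{i}$ with $\NE(f_{i})=F_{i}$ (with $f_{i}=\operatorname{id}$ if $F_{i}=\{0\}$).

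Finally, I would compare $f$ with the product morphism $g:=(f_{1},f_{2})\colon X\to Y_{1}\times Y_{2}$. Its target is normal and projective, and its fibers $f_{1}^{-1}(y_{1})\times f_{2}^{-1}(y_{2})$ are connected, so $g$ is a contraction; its relative cone equals $F_{1}+F_{2}=\NE(f)$. The standard principle that a contraction of a smooth projective variety is determined, up to isomorphism of the target, by its set of contracted curves (seen, e.g., via the rigidity of connected-fiber morphisms together with $g_{*}\mathcal{O}_{X}=\mathcal{O}_{Y_{1}\times Y_{2}}$) then produces an isomorphism $Y\cong Y_{1}\times Y_{2}$ intertwining $f$ and $(f_{1},f_{2})$. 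The main obstacle is the face-decomposition step, which rests on the elementary convex-geometric fact that a face of the sum of two pointed cones living in complementary linear subspaces splits as the sum of its intersections with the two subcones; once this is in hand, everything else is a routine application of the standard uniqueness of contractions.
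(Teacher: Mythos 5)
Your proof is correct, but it runs on the dual side of the ledger from the paper's. The paper works entirely with divisors: it pulls back an ample divisor $A$ from $Y$, splits $f^{*}(A)=\pi_{1}^{*}D_{1}+\pi_{2}^{*}D_{2}$ using $\mathcal{N}^{1}(X)\cong\pi_{1}^{*}\mathcal{N}^{1}(X_{1})\oplus\pi_{2}^{*}\mathcal{N}^{1}(X_{2})$, checks via the projection formula that each $D_{i}$ is nef, invokes nef $\Rightarrow$ semiample on the Fano factor to produce $f_{i}$, and then identifies $f$ with $(f_{1},f_{2})$ by matching pullbacks of ample divisors (\cite[Proposition 1.14]{DEB}). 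You instead work with curves: you split $\NE(X)=\NE(X_{1})+\NE(X_{2})$, use the pointedness of the cones to split the face $\NE(f)=F_{1}+F_{2}$, contract each $F_{i}$ by the Contraction Theorem (every face of $\NE$ of a Fano is $K$-negative, hence contractible), and conclude by the uniqueness of a contraction with a prescribed relative cone. The two arguments are dual formulations of the same mechanism --- both ultimately rest on the K\"unneth-type decomposition of the N\'eron--Severi data of a product of Fanos and on rigidity of connected-fiber morphisms --- but yours makes the convex-geometric content explicit (the face-splitting lemma for a sum of pointed cones in complementary subspaces), whereas the paper's divisor-side computation sidesteps that lemma at the cost of a semiampleness argument. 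One small point to make airtight in your write-up: the identity $[\gamma]=(p_{1})_{*}[\gamma]+(p_{2})_{*}[\gamma]$ is itself a consequence of the Picard-group decomposition (which uses $H^{1}(X_{i},\mathcal{O}_{X_{i}})=0$, i.e.\ the Fano hypothesis), so it deserves the same citation the paper gives for the divisor version.
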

\begin{proof}
	Take $D=f^{*}(A)$, where $A$ is an ample divisor of $Y$. Let us consider the two natural projections $\pi_{1}\colon X\to X_{1}$, $\pi_{2}\colon X\to X_{2}$. Being $X$ Fano, one has $\mathcal{N}^{1}(X)\cong \pi_{1}^{*} \mathcal{N}^{1}(X_{1})\oplus \pi_{2}^{*} \mathcal{N}^{1}(X_{2})$ (see for instance \cite[Ex. III, 12.6]{HART}), then $D=\pi_{1}^{*}D_{1}+\pi_{2}^{*}D_{2}$, where $D_{i}$ is a divisor of $X_{i}$ for $i=1,2$.
	
	Let us consider $D_{1}$, and let $C\subset X_{1}$ be an irreducible curve. Set $C^{\prime}=C\times \{x_{2}\}$, where $x_{2}\in X_{2}$ is a point. Using the projection formula we get $\pi_{2}^{*} D_{2}\cdot C^{\prime}=D_{2}\cdot (\pi_{2})_{*} C^{\prime}=0$, hence:
	\begin{center}
		$D_{1}\cdot C=D_{1}\cdot {\pi_{1}}_{*}(C^{\prime})=(\pi_{1}^{*}D_{1}+\pi_{2}^{*}D_{2})\cdot C^{\prime}=D\cdot C^{\prime}\geq 0$,
	\end{center} where the last inequality holds because $D$ is a nef divisor of X. Thus $D_{1}$ is nef on $X_{1}$ Fano, hence $D_{1}$ is semiample, and we obtain a contraction $f_{1}\colon X_{1}\to Y_{1}$ such that $D_{1}={f_{1}}^{*}(A_{1})$, with $A_{1}$ ample on $Y_{1}$. By the same argument, we get a contraction $f_{2}\colon X_{2}\to Y_{2}$ such that $D_{2}={f_{2}}^{*}(A_{2})$, with $A_{2}$ ample on $Y_{2}$. 
	
	Set $g:=(f_{1}, f_{2})\colon X\to Y_{1}\times Y_{2}$, let $p_{i}\colon Y_{1}\times Y_{2}\to Y_{i}$ the projection for $i=1,2$, and $A=p^{*}(A_{1})+q^{*}(A_{2})$, which is an ample divisor of $Y_{1}\times Y_{2}$. Then:
	\begin{center} $g^{*}(A)=g^{*}{p_{1}}^{*}(A_{1})+g^{*}{p_{2}}^{*}(A_{2})=\pi_{1}^{*}f_{1}^{*}(A_{1})+\pi_{2}^{*}f_{2}^{*}(A_{2})=\pi_{1}^{*}D_{1}+\pi_{2}^{*}D_{2}=D$, 
	\end{center}
	which implies that $Y\cong Y_{1}\times Y_{2}$ and that $g=f$ under this isomorphism, by \cite[Proposition 1.14]{DEB}.
\end{proof}
\section{Fiber type $K$-negative contraction with one-dimensional fibers} \label{terzocapitolo}
\noindent The first results of this section, Proposition \ref{TEO1} and \ref{smoothcase}, can be viewed as a generalization to higher dimension of Mori and Mukai's results on conic bundles on Fano 3-folds in \cite{MM}, in particular \cite[Proposition 4.9]{MM}.

We first study what happens when the variety has some mild singularities, and then we restrict to the smooth case.
To prove our first proposition in the singular case, we need the following:
\begin{thm}[\cite{NOCE}, Theorem 1.2] \label{NOCE} Let $X$ be a projective, locally factorial variety with canonical singularities and with at most finitely many non-terminal points. Let $\varphi\colon X\to Y$ be an elementary birational $K$-negative contraction whose fibers are at most one-dimensional. Then:
\begin{enumerate}[(1)]
\item every non-trivial fiber of $\varphi$ is irreducible, has no multiple one dimensional components and its reduced structure is isomorphic to $\mathbb{P}^{1}$.
Moreover the general non-trivial fiber is smooth, i.e. it is isomorphic to $\mathbb{P}^{1}$ as scheme;
\item the contraction $\varphi$ is divisorial, and denoting by $E=\Exc{(\varphi)}$, one has $\dim{\varphi(E)}=n-2$. Moreover $K_{X}=\varphi^{*}(K_{Y})+E$;
\item Y has canonical singularities and at most finitely many non terminal points;
\item let $C$ be an irreducible curve of $X$ such that $[C]\in \NE{(\varphi)}$. We have that $K_{X}\cdot C=E\cdot C=-1$.
\end{enumerate}
\end{thm}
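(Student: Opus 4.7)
The statement is a singular generalization of Ando's theorem (Theorem~\ref{thmWis}), and my plan is to mimic the proof of the smooth case, importing the hypotheses of local factoriality, canonical singularities, and isolated non-terminal points precisely at the points where smoothness is classically used. First I would settle divisoriality and the dimension of the exceptional locus: the Ionescu--Wi\'sniewski inequality for the $K$-negative elementary contraction $\varphi$ reads $\dim \Exc(\varphi) + \dim F \geq n + \ell(R) - 1$, where $F$ is a general non-trivial fiber and $\ell(R)\geq 1$ is the length of $R$. Since $\dim F \leq 1$ and $\varphi$ is birational, we deduce $\dim \Exc(\varphi) = n-1$, and squeezing the inequality forces $\ell(R)=1$. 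Hence $E := \Exc(\varphi)$ is a Weil divisor, prime by the standard argument on elementary rays, and Cartier by local factoriality; moreover $\dim \varphi(E) = n-2$ by upper semicontinuity of fiber dimension.

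Having $\ell(R)=1$ immediately yields $K_X\cdot C = -1$ for every irreducible contracted curve $C$, which is half of (4). Because the relative Picard number of $\varphi$ is one and $E$ is a Cartier $\varphi$-exceptional divisor negative on $R$, we get $K_X \equiv_{\varphi} aE$ for some $a\in\Q$; intersecting with a general fiber curve $\ell$ gives $a=1$, so both $E\cdot C = -1$ and the canonical bundle formula $K_X = \varphi^*(K_Y) + E$ of item (2). Item (3) would then follow from the negativity lemma: canonical singularities of $X$ descend to $Y$ through this formula, and the non-terminal locus of $Y$ is contained in the union $\varphi(\mathrm{Sing}(E))\cup \varphi(\{\text{non-terminal points of }X\})$, which is finite provided $E$ has finitely many singular points; this last assertion uses (1), since then $E\to \varphi(E)$ is generically a $\mathbb{P}^1$-fibration, forcing $\mathrm{Sing}(E)$ into the preimage of a proper closed subset, which meets the finite non-terminal locus of $X$ in a finite set.

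The main obstacle, and the genuinely new part compared with Ando's proof, is item (1). By Mori's bend-and-break the general fiber is $\mathbb{P}^1$, so only special fibers need attention. I would localize around a point $y\in\varphi(E)$: after shrinking $Y$, the finite non-terminal locus of $X$ meets $\varphi^{-1}(y)$ in at most a discrete set. On the terminal part of $X$ (smooth in codimension two), adaptations of Ando's deformation arguments using $E$ Cartier and $E\cdot C=-1$ force the scheme-theoretic fiber to be an irreducible reduced $\mathbb{P}^1$. The delicate new point is the behaviour near an isolated non-terminal point $x_0$ meeting $\varphi^{-1}(y)$: here $X$ is only mildly singular, but since $x_0$ is isolated and the fiber is one-dimensional, one can analyse the scheme structure locally, ruling out multiple components by combining the Cartier condition on $E$ with the equality $-K_X\cdot C = 1$, which leaves no numerical room for multiplicity. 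Making this local analysis robust in the presence of canonical (non-terminal) singularities, and patching it with the smooth-in-codimension-two description elsewhere, is where I expect the bulk of the effort to lie.
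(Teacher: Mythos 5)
This statement is not proved in the paper at all: it is imported verbatim from Della Noce's work (the citation \cite{NOCE}, Theorem 1.2), so there is no in-paper argument to compare against. Judged on its own terms, your sketch has a genuine gap at its very first and most important step. You invoke the Ionescu--Wi\'sniewski fiber-locus inequality $\dim \Exc(\varphi)+\dim F\geq n+\ell(R)-1$ to conclude that $\varphi$ is divisorial, but that inequality is a theorem about \emph{smooth} varieties (its proof deforms rational curves inside the smooth locus), and it does not extend verbatim to canonical singularities. The paper's own remark immediately after the theorem stresses that divisoriality is precisely the crucial content and that the hypothesis ``at most finitely many non-terminal points'' cannot be weakened: Della Noce exhibits a locally factorial canonical Fano variety with a positive-dimensional non-terminal locus admitting a \emph{small} elementary $K$-negative contraction with one-dimensional fibers. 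If the inequality applied as you use it, no such example could exist. So your argument begs the question; the actual proof must use the finiteness of the non-terminal locus in an essential way to exclude small contractions, and this is where the real work lies, not in item (1) as you anticipate.

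Two smaller issues compound this. First, your derivation of $a=1$ in $K_X\equiv_{\varphi} aE$ presupposes $E\cdot \ell=-1$ on a general fiber curve, which is part of what (4) asserts; in the smooth case this comes from a normal-bundle/adjunction computation along a general fiber $\cong\mathbb{P}^1$, and in the singular case one must first check the general fiber avoids the bad locus, so the logical order (2)$\Rightarrow$(4) as you present it is circular without that extra step. Second, for (3) you need $Y$ to be ($\mathbb{Q}$-)Gorenstein before discrepancies over $Y$ make sense; in the paper this is supplied by Lemma \ref{locally}, whose proof already uses (4), and your containment of the non-terminal locus of $Y$ in $\varphi(\Sing(E))\cup\varphi(\{\text{non-terminal points}\})$ is asserted rather than derived from a discrepancy computation through $K_X=\varphi^*K_Y+E$.
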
  
\begin{remark} The crucial statement in the theorem above is that $\varphi$ is divisorial. In \cite[Example 1.11]{NOCE}, the author shows that the assumption on the non-terminal locus cannot be weakened.
\end{remark}
\begin{lemma} \label{locally}
In the context of Theorem \ref{NOCE}, $Y$ is a locally factorial variety.
\end{lemma}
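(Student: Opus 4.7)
The plan is to reduce local factoriality of $Y$ to showing that every prime Weil divisor on $Y$ is Cartier, and then, given such a divisor $D\subset Y$, to produce a Cartier divisor on $Y$ that equals $D$ as a Weil divisor. The strategy will be to lift $D$ to a Cartier divisor on $X$ via its strict transform (which is Cartier by local factoriality of $X$), correct it by a suitable multiple of the exceptional divisor $E$ so that it becomes numerically trivial on the fibers of $\varphi$, descend the resulting line bundle through $\varphi$ to $Y$, and finally push the equation back down to identify $D$ with the descended divisor.

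In detail, I take a prime Weil divisor $D\subset Y$ and let $\tilde D\subset X$ be its strict transform; since $X$ is locally factorial, $\tilde D$ is Cartier. Let $C\subset X$ be an irreducible curve with $[C]\in\NE(\varphi)$. By Theorem \ref{NOCE}(4) we have $E\cdot C=-1$. Setting $a:=\tilde D\cdot C\in\mathbb Z$, I consider the Cartier divisor $L:=\tilde D+aE$ on $X$, for which
\[ L\cdot C \;=\; \tilde D\cdot C + a(E\cdot C) \;=\; a-a \;=\; 0. \]
Since every class in $\NE(\varphi)$ is a non-negative multiple of $[C]$, the divisor $L$ is numerically trivial on every curve contracted by $\varphi$.

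Because $\varphi$ is an elementary $K_X$-negative contraction, the relative contraction theorem yields a Cartier divisor $M$ on $Y$ with $L=\varphi^*M$. I then push everything forward as Weil divisors: $\varphi_*\tilde D=D$ by the definition of strict transform, $\varphi_*E=0$ because $\dim\varphi(E)=n-2$ by Theorem \ref{NOCE}(2), and $\varphi_*\varphi^*M=M$ since $\varphi$ is birational and $M$ is Cartier. Consequently
\[ M \;=\; \varphi_*(\varphi^*M) \;=\; \varphi_*L \;=\; \varphi_*\tilde D + a\,\varphi_*E \;=\; D, \]
so $D=M$ is Cartier on $Y$, and $Y$ is locally factorial.

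I expect the main obstacle to lie in justifying the descent step: namely, that the Cartier divisor $L$, numerically trivial on the fibers of the elementary $K$-negative contraction $\varphi$, is actually the $\varphi$-pullback of a Cartier divisor on $Y$. This rests on the contraction theorem combined with the fact that by Theorem \ref{NOCE}(1) the fibers of $\varphi$ have reduced structure isomorphic to $\mathbb P^1$, on which a numerically trivial line bundle is trivial. It is equally crucial that $E\cdot C=-1$ be an \emph{integral} equation and not merely rational, so that $a$ is an integer and $L$ is an honest Cartier divisor rather than only $\mathbb Q$-Cartier; otherwise the argument would yield only $\mathbb Q$-factoriality of $Y$, not local factoriality.
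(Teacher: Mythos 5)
Your proof is correct and takes essentially the same route as the paper's: lift $D$ to its (Cartier) strict transform, correct by $(\tilde D\cdot C)\,E$ using the integrality of $E\cdot C=-1$, descend through the elementary contraction via the contraction theorem, and identify the descended Cartier divisor with $D$. The only cosmetic difference is in that last identification, which the paper carries out by comparing supports and restricting to the locus where $\varphi$ is an isomorphism, whereas you push forward as Weil divisors—these amount to the same observation.
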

\begin{proof} This is a standard property, we give the proof for the reader's convenience. The strategy adopted is quite similar to the proof of \cite[Proposition 7.44]{DEB}.

Let $D\subset Y$ be a prime Weil divisor, and $\tilde{D}\subset X$ its transform, that is a Cartier divisor because $X$ is locally factorial. Set $E:=\Exc{(\varphi)}$. By Theorem \ref{NOCE} $(4)$, if $C\subset X$ is an irreducible curve such that $[C]\in \NE{(\varphi)}$, then $E\cdot C=-1$. Thus $(\tilde{D}+\lambda E)\cdot C=0$, where $\lambda=\tilde{D}\cdot C \in \mathbb{Z}$. Hence $\tilde{D}+\lambda E$ is a Cartier divisor and by \cite[Theorem $7.39$ (c)]{DEB} we get $\tilde{D}+\lambda E=\varphi^{*}(B)$, where $B$ is an effective Cartier divisor of $Y$. Then, $\text{Supp} (B)= \text{Supp} (D)$ in $Y$, so that $B=m D$ for some $m\in \mathbb{Z}_{\geq 1}$. By restricting to the open subset where $\varphi$ is an isomorphism, we get $m=1$, so that $D$ is a Cartier divisor of $Y$.
\end{proof}

Now we are ready to give our first result, that will be essential to investigate conic bundles. 
\begin{proposition} \label{TEO1}
Let $X$ be a locally factorial, projective variety with canonical singularities and with at most finitely many non-terminal points.

Let $f\colon X\to Y$ be a fiber type $K$-negative contraction such that every fiber has dimension one. Set $dim \ {\NE{(f)}}=\rho_{X}-\rho_{Y}=r$. Then:
\begin{enumerate} [(1)]
\item $f$ has the following factorization: 
\begin{center}$X \stackrel{f_{1}}{\longrightarrow} X_{1}\stackrel{f_{2}}{\longrightarrow}  X_{2}\stackrel{}{\longrightarrow} \cdot\cdot\cdot \stackrel{}{\longrightarrow} X_{r-2}\stackrel{f_{r-1}}{\longrightarrow} X_{r-1}\stackrel{g}{\longrightarrow} Y$ 
\end{center} 
where $f_{i}$ is an elementary $K$-negative divisorial contraction, $X_{i}$ is locally factorial, with canonical singularities and at most finitely many non-terminal points for every $i=1,\dots,r-1$, and $g$ is an elementary fiber type $K$-negative contraction with one-dimensional fibers.
\item There are $A_{1},\dots,A_{r-1}$ prime divisors of $Y$ and $r-1$ pairs of prime divisors $E_{i}, \hat{E_{i}}$ of $X$ such that $f^{*}(A_{i})=E_{i}+\hat{E_{i}}$ and $A_{i}\cap A_{j}=\emptyset$, for every $i\neq j$, $i,j=1,\dots,r-1$. Moreover, every fiber of $g$ over $A_{1}\cup\dots\cup A_{r-1}$ is irreducible and generically reduced, and the general fiber of $f$ is numerically equivalent to $e_{i}+\hat{e}_{i}$ where $e_{i}, \hat{e}_{i}$ are irreducible components of fibers of $f$ such that $e_{i}\subset E_{i}$, $\hat{e}_{i}\subset \hat{E_{i}}$, $E_{i}\cdot e_{i}=\hat{E_{i}}\cdot\hat{e}_{i}=-1$ and  $\hat {E_{i}}\cdot e_{i}=E_{i}\cdot\hat{e}_{i}=1$. 
\item If $r>1$, $\NE{(f)}$ has exactly $2(r-1)$ extremal rays, generated by $[e_{i}]$, $[\hat{e}_{i}]$, for $i=1,\dots,r-1$.
\end{enumerate}
\end{proposition}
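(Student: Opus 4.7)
The plan is to run a relative MMP for $f$ and read off the required divisorial data from the steps. Since $\NE(f)=\overline{\NE}(X)\cap\ker(f_{*})$ is a face of $\overline{\NE}(X)$, every extremal ray of $\NE(f)$ is extremal in $\overline{\NE}(X)$. Such a ray $R$ is $K_{X}$-negative and has fibers of dimension at most one (being contained in fibers of $f$), so by Theorem~\ref{NOCE} its contraction is either divisorial or elementary of fiber type. If $R\subsetneq \NE(f)$ were of fiber type, the resulting contraction $X\to X'$ would satisfy $\dim X'=n-1=\dim Y$, and then $X'\to Y$ would be birational with connected fibers, hence an isomorphism, forcing $R=\NE(f)$ and $r=1$. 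Thus for $r>1$ every extremal ray of $\NE(f)$ gives a divisorial contraction: pick $R_{1}\subset\NE(f)$ and contract it to obtain $f_{1}\colon X\to X_{1}$. By Lemma~\ref{locally} and Theorem~\ref{NOCE}(3), $X_{1}$ is locally factorial with canonical singularities and finitely many non-terminal points, and the induced $X_{1}\to Y$ is again $K$-negative of fiber type with one-dimensional fibers and relative Picard number $r-1$. Iterating $r-1$ times yields the factorization in (1), with $g\colon X_{r-1}\to Y$ elementary.

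For (2), let $\widetilde{E}_{i}\subset X_{i-1}$ be the exceptional divisor of $f_{i}$; by Theorem~\ref{NOCE}(1)--(2) it is a $\mathbb{P}^{1}$-bundle over its image $S_{i}\subset X_{i}$ of codimension two. Set $E_{i}\subset X$ to be the strict transform of $\widetilde{E}_{i}$, let $A_{i}\subset Y$ be the image of $S_{i}$ under the remaining maps $X_{i}\to Y$, and let $\hat{E}_{i}$ be the closure of $f^{-1}(A_{i})\setminus E_{i}$. A dimension count, using that the $\mathbb{P}^{1}$-fibers of $\widetilde{E}_{i}\to S_{i}$ lie inside fibers of $f$, shows $\dim A_{i}=n-2$, so $A_{i}$ is a prime divisor of $Y$. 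The disjointness $A_{i}\cap A_{j}=\emptyset$ is arranged by choosing the MMP so that each $\widetilde{E}_{i}$ avoids the loci of the other contractions, in the spirit of the proof of Proposition~\ref{MMP}. Denoting by $e_{i}\subset E_{i}$ (the strict transform of) a $\mathbb{P}^{1}$-fiber of $\widetilde{E}_{i}\to S_{i}$, Theorem~\ref{NOCE}(4) gives $E_{i}\cdot e_{i}=-1$ in $X_{i-1}$, and the earlier contractions being isomorphisms near a general $e_{i}$ preserve this intersection in $X$.

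Over a general $a\in A_{i}$, the fiber $f^{-1}(a)$ contains $e_{i}$ and, being a one-dimensional connected fiber of a $K$-negative contraction, splits as $e_{i}\cup\hat{e}_{i}$ with $e_{i}\cap\hat{e}_{i}$ a single reduced point (if $f^{-1}(a)$ coincided with $e_{i}$, then $\widetilde{E}_{i}$ would be a whole component of a fiber of $X_{i-1}\to Y$, contradicting the relative Picard count at step $i$). Intersecting on the fiber yields $\hat{E}_{i}\cdot e_{i}=1$ and $E_{i}\cdot\hat{e}_{i}=1$, together with $F\equiv e_{i}+\hat{e}_{i}$ for a general fiber $F$. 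Writing $f^{*}(A_{i})=aE_{i}+b\hat{E}_{i}$ and intersecting with $e_{i}$ gives $-a+b=0$, i.e.\ $a=b$; since $A_{i}$ is reduced (being the image of an irreducible variety generically finite onto its image) and the general fiber over $A_{i}$ is reduced, we conclude $a=b=1$. Finally, $\hat{E}_{i}\cdot\hat{e}_{i}=-1$ follows from $F\cdot\hat{E}_{i}=0$ (by moving $F$ away from $A_{i}$) together with $\hat{E}_{i}\cdot e_{i}=1$.

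For (3), any extremal ray of $\NE(f)$ is generated by the class of an irreducible curve lying in a fiber of $f$. Such a curve is either a whole general fiber $F$ (but $[F]=[e_{i}]+[\hat{e}_{i}]$ lies in the interior of $\NE(f)$ when $r>1$), or an irreducible component of a reducible fiber; by part (2) such components appear only over $A_{1}\cup\cdots\cup A_{r-1}$ and are among the $e_{i}$ and $\hat{e}_{i}$. Hence the $2(r-1)$ rays $\mathbb{R}_{\geq 0}[e_{i}]$, $\mathbb{R}_{\geq 0}[\hat{e}_{i}]$ exhaust the extremal rays of $\NE(f)$, and each is extremal since it admits an elementary contraction (namely $f_{i}$, or its symmetric analogue obtained by running a MMP that first contracts the ray $[\hat{e}_{i}]$). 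The main obstacle I expect is the disjointness $A_{i}\cap A_{j}=\emptyset$ and the preservation of $E_{i}\cdot e_{i}=-1$ through the MMP; both reduce to arranging a special MMP in which the various exceptional divisors remain disjoint from the loci of the other contractions, the locally factorial/canonical analogue of the setup used in Proposition~\ref{MMP}.
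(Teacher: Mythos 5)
Your overall route is the same as the paper's: contract extremal rays of $\NE(f)$ one at a time, use Theorem \ref{NOCE} to see that each step is divisorial and preserves the class of singularities, and read off $E_{i}$, $\hat{E}_{i}$, $A_{i}$ from the exceptional divisors. The genuine gap is the disjointness $A_{i}\cap A_{j}=\emptyset$. You propose to \emph{arrange} it by choosing the MMP so that each exceptional divisor avoids the loci of the other contractions, ``in the spirit of Proposition \ref{MMP}''. This cannot work as stated: first, the $A_{i}$ are intrinsic to $f$ (they are exactly the components of the discriminant over which $f^{*}$ is reducible, cf.\ Remark \ref{uniquelydet}), so their mutual intersection is not something one can choose; second, even if the exceptional divisors were made pairwise disjoint in $X$, their images under the remaining maps down to $Y$ could still meet, so disjointness upstairs does not give disjointness of the $A_{i}$ in $Y$; third, the disjointness mechanism of Proposition \ref{MMP} is specific to a special MMP for $-D$ and does not transfer to the relative MMP for $f$. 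The paper's argument is forced rather than arranged: since $K_{X}$ is Cartier and $f$ is $K$-negative, every irreducible component of every fiber of $f$ has anticanonical degree at least $1$, while the general fiber has degree exactly $2$, so no fiber of $f$ has more than two components. In the inductive setup this shows that a fiber of $g_{1}$ meeting $A_{1}^{\prime}=f_{1}(\Exc(f_{1}))$ must be irreducible and meet $A_{1}^{\prime}$ in a single point, whereas the fibers of $g_{1}$ over $A_{2}\cup\dots\cup A_{r-1}$ are reducible; hence $A_{1}\cap A_{i}=\emptyset$. Note that this same degree-$2$ bound is what you are implicitly using, without stating it, when you assert that $f^{-1}(a)$ over a general $a\in A_{i}$ splits as $e_{i}\cup\hat{e}_{i}$ with exactly two components.

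A smaller but real issue is in (3): you correctly show that the only candidates for extremal rays are $\mathbb{R}_{\geq0}[e_{i}]$ and $\mathbb{R}_{\geq0}[\hat{e}_{i}]$, but to show that each of them actually \emph{is} extremal you invoke ``a MMP that first contracts the ray $[\hat{e}_{i}]$'', which presupposes the extremality you are trying to prove. The paper instead uses $\hat{E}_{i}\cdot\hat{e}_{i}=-1$: some extremal ray $R$ of $\NE(f)$ must satisfy $\hat{E}_{i}\cdot R<0$, hence $\Lo(R)\subset\hat{E}_{i}$, and $[\hat{e}_{i}]$ is the only class of $\NE(f)$ supported on $\hat{E}_{i}$, so $R=\mathbb{R}_{\geq0}[\hat{e}_{i}]$; similarly for $[e_{i}]$.
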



\begin{proof} We prove $(1)$ and $(2)$ by induction on $\rho_{X}-\rho_{Y}=r$. The case $r=1$ is trivial. Assume that $r>1$, and that the statement holds when the dimension of the relative cone is less than $r$.

Let $R$ be an extremal ray of $\NE{(f)}$ and let $f_{1}\colon X\to X_{1}$ be its contraction. Being $\NE{(f_{1})}\subset \NE{(f)}$, then there exists a morphism $g_{1}\colon X_{1}\to Y$ such that $f=g_{1} \circ f_{1}$. One has that $f_{1}$ is $K$-negative, because so is $f$.
$$\xymatrix{{X}\ar[r]_{f_{1}}\ar@/^1pc/[rr]^f&X_{1}\ar[r]_{g_{1}}&Y
}$$
We observe that $f_{1}$ is birational. Indeed, suppose by contradiction that $f_{1}$ is a fiber type contraction. In this case, we have that $g_{1}$ is a finite morphism: indeed, if there exists an irreducible curve $C\subset X_{1}$ such that $g_{1}(C)=p$, then $f^{-1}(p)$ has dimension greater than one, against our assumption. Hence $\NE{(f)}=\NE{(f_{1})}$ and $f$ is an elementary contraction, while we are assuming that $r>1$. Thus $f_{1}$ is birational, and it has one-dimensional fibers, so that it is divisorial by Theorem \ref{NOCE} $(2)$, and $g_{1}$ is a fiber type contraction with one-dimensional fibers.

Applying Theorem \ref{NOCE} $(3)$ and Lemma \ref{locally}, one has that $X_{1}$ has canonical singularities with at most finitely many non-terminal points, and it is locally factorial. Set $A_{1}^{\prime}:=f_{1}(\Exc{(f_{1})})$. 

We observe that $g_{1}$ is a finite morphism on $A_{1}^{\prime}$: if there is a curve $C\subset A_{1}^{\prime}$ contracted by $g_{1}$ to a point $p\in Y$, $f^{-1}(p)$ would have dimension greater than one, against our hypothesis. Hence $A_{1}:=g_{1}(A_{1}^{\prime})$ is an irreducible divisor of $Y$.

We show that $g_{1}$ is $K$-negative. To this end, since the fibers of $g_{1}$ are one-dimensional, it is enough to show that $-K_{X_{1}}\cdot \Gamma>0$ for every irreducible curve $\Gamma\subset X_{1}$ such that $g_{1}(\Gamma)=\{pt\}$. Let $C\subset X_{1}$ be such a curve. Then $C\not\subset A_{1}^{\prime}$ and consider $\tilde{C}$ the transform of $C$ in $X$, so that $(f_{1})_{*}(\tilde{C})=C$. By Theorem \ref{NOCE} $(2)$, $K_{X}=f_{1}^{*}(K_{X_{1}})+E$ where $E=\Exc{(f_{1})}$. Then:
\begin{center} 
$-K_{X_{1}} \cdot C= -K_{X_{1}}\cdot(f_{1})_{*} (\tilde{C})= f_{1}^{*}(-K_{X_{1}}) \cdot \tilde{C}= (-K_{X}+E)\cdot\tilde{C}>0$.
\end{center}
Thus $g_{1}$ is $K$-negative, and applying the induction assumption to $g_{1}$, we get $(1)$. Moreover, still by the induction assumptions, we get $r-2$ prime divisors $A_{2}, \dots, A_{r-1}$ of $Y$, pairwise disjoint, such that $g_{1}^{*}(A_{i})=F_{i}+\hat{F}_{i}$, where $F_{i}, \hat{F}_{i}$ are prime divisors of $X_{1}$, for every $i=2,\dots,r-1$.  

We show that the fibers of $f$ have at most two components. Denoting by $e$ the general fiber, one has $-K_{X}\cdot e= 2$ because the normal bundle of $e$ in $X$ is trivial, and $e\cong \mathbb{P}^{1}$. Since each irreducible component of every fiber of $f$ has anticanonical degree at least 1, there are no more than two components for every fiber.

We notice that the fibers of $g_{1}$ that meet $A_{1}^{\prime}$ are irreducible, generically reduced, and intersect set-theoretically $A_{1}^{\prime}$ in one point. If a fiber of $g_{1}$ intersects $A_{1}^{\prime}$ in more than one point, then $f$ would have a fiber with more than two components. The same holds if a reducible fiber of $g_{1}$ meets $A_{1}^{\prime}$. 

Since all fibers of $g_{1}$ over $A_{2}\cup \cdots \cup A_{r-1}$ are reducible, we deduce that $A_{1}\cap A_{i}=\emptyset$ for every $i=2,\dots,r-1$. Our situation is like in Figure \ref{fig:fig1}.

\begin{figure}[ht]
	\centering
		\includegraphics[scale=0.9]{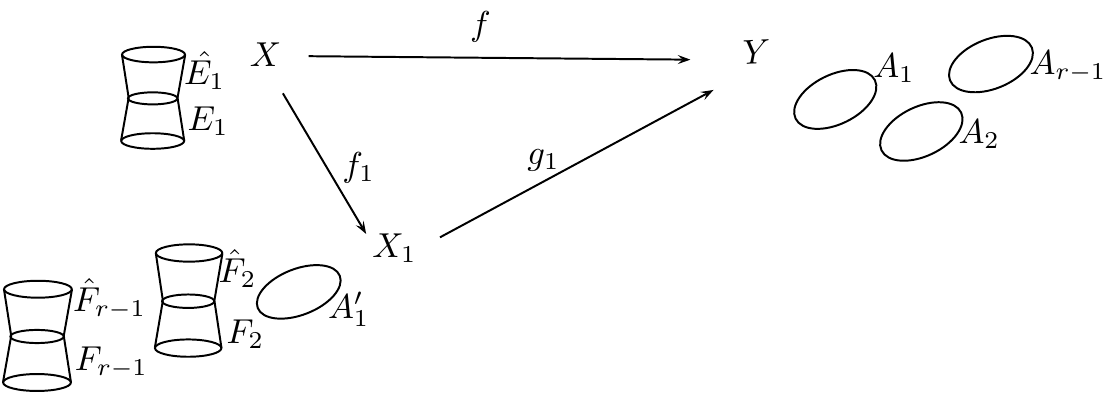}
	\caption{Divisors in the proof of Proposition \ref{TEO1}.}
	\label{fig:fig1}
\end{figure}

In particular $F_{i}, \hat{F}_{i}\subset X_{1}\setminus f_{1}(\Exc{(f_{1})})$; let $E_{i}, \hat{E}_{i} \subset X$ be their transforms, so that $f^{*}(A_{i})=E_{i}+\hat{E}_{i}$, for every $i=2,\dots,r-1$.

By what we have already shown, for every $q\in A_{r-1}$, $f^{-1}(q)$ will have two irreducible components: one contracted by $f_{1}$ and another one that is the transform of $g_{1}^{-1}(q)$ in $X$. Then $f^{*}(A_{1})=E_{1}+\hat{E}_{1}$ where $E_{1}$ is the exceptional locus of $f_{1}$, and $\hat{E}_{1}$ is the transform in $X$ of the irreducible divisor $g_{1}^{-1}(A_{1})$ of $X_{1}$. Hence every fiber of $f$ over $A_{1}$ will be numerically equivalent to $e_{1}+\hat{e}_{1}$ where $e_{1}$ and $\hat{e}_{1}$ are irreducible components of the fibers such that $e_{1}\subset E_{1}$ and $\hat{e}_{1}\subset \hat{E}_{1}$.

Finally by Theorem \ref{NOCE} $(4)$, we have $\hat{E}_{1} \cdot \hat{e}_{1}=-1$. This easily yields the intersections required by the statement.\\

We show $(3)$. By our assumption, $r>1$. Being $\NE{(f)}$ generated by the components of the fibers of $f$, it is spanned by $\{[e_{i}], [\hat{e}_{i}]\}_{i=1,\dots,r-1}$. We need to show that every $[e_{i}]$ and $[\hat{e}_{i}]$ spans an extremal ray. To deduce this, we examine the fibers $e_{i}\cup \hat{e}_{i}$. Since $\hat{E_{i}}\cdot \hat{e}_{i}=-1$, there exists an extremal ray $R$ in $\NE{(f)}$ such that $\hat{E_{i}}\cdot R< 0$, thus $\Lo{(R)}\subset \hat{E_{i}}$. But $[\hat{e}_{i}]$ is the unique numerical equivalence class of curves of $\NE{(f)}$ contained in $\hat{E_{i}}$, hence $R=\mathbb{R}_{\geq0}[\hat{e_{i}}]$. Similarly, using that $E_{i}\cdot e_{i}< 0$, $R=\mathbb{R}_{\geq0}[e_{i}]$ is an extremal ray of $\NE{(f)}$.
\end{proof}

The following proposition tells us what happens in the smooth case. It is probably well-known to experts in the field, but we could not find a suitable reference. 
\begin{proposition} \label{smoothcase} Notation as in Proposition \ref{TEO1}. Assume that $X$ is smooth. Then we have:
\begin{enumerate}[(1)]
\item $f$ is a $conic \ bundle$, $g$ is an elementary conic bundle, every $A_{i}$ is smooth, and $f_{i}$ is the blow-up of the manifold $X_{i}$ along a smooth subvariety of codimension 2 that is isomorphic to $A_{i}$, for every $i=1,\dots,r-1$. 
\item Let $\bigtriangleup_{f}=\{y \in Y\mid f^{-1}(y) \text{ is not a smooth conic}\}$ be the discriminant divisor of $f$. One has $\bigtriangleup_{f}=A_{1}\sqcup\dots\sqcup A_{r-1}\sqcup \bigtriangleup_{g}$, and each $A_{i}$ is a smooth connected component of $\bigtriangleup_{f}$, for every $i=1,\dots,r-1$.
Moreover, $f$ has reduced fibers over $A_{1}\cup\dots\cup A_{r-1}$.
\end{enumerate}
\end{proposition}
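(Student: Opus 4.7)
The plan is to iterate Ando's theorem along the factorization supplied by Proposition \ref{TEO1}, induct on $r=\rho_X-\rho_Y$, and read the discriminant off the fiber description already established there together with the characterization of $\Sing(\bigtriangleup_f)$ from \cite{SARK}.

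The base case $r=1$ is an immediate application of Theorem \ref{thmWis}: since $X$ is smooth and $f$ is a $K$-negative fiber type contraction with one-dimensional fibers, $Y$ is smooth and $f=g$ is an elementary conic bundle. For $r>1$, consider the first link $f_1\colon X\to X_1$ in the factorization of Proposition \ref{TEO1}: this is an elementary $K$-negative divisorial contraction of the smooth variety $X$ with one-dimensional fibers, so by the divisorial counterpart of Theorem \ref{thmWis} in \cite{ANDO}, $X_1$ is smooth and $f_1$ realizes $X$ as the blow-up of $X_1$ along the smooth codimension-two subvariety $B_1:=A_1'=f_1(\Exc(f_1))$. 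The remaining map $g_1\colon X_1\to Y$ is then a $K$-negative fiber type contraction of a smooth variety with one-dimensional fibers and relative Picard rank $r-1$, and applying the inductive hypothesis to $g_1$ yields that every $X_i$ is smooth, every $f_i$ for $i\ge 2$ is a blow-up of $X_i$ along a smooth codimension-two subvariety isomorphic to $A_i$, and $g$ is an elementary conic bundle.

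It remains to show $A_1$ is smooth and $B_1\cong A_1$. By Proposition \ref{TEO1}, every fiber of $f$ over $A_1$ is the reduced pair of lines $e_1\cup\hat e_1$; by \cite[Proposition 1.8, (5.c)]{SARK} this forces $A_1\subset \bigtriangleup_f\setminus\Sing(\bigtriangleup_f)$, so $\bigtriangleup_f$ is smooth along $A_1$, and $A_1$ is itself a smooth irreducible component of $\bigtriangleup_f$. The proof of Proposition \ref{TEO1} already shows that $g_1\vert_{B_1}\colon B_1\to A_1$ is bijective: for $y\in A_1$ the fiber $g_1^{-1}(y)=f_1(\hat e_1)$ meets $B_1=f_1(E_1)$ in a single point because $\hat e_1\cdot E_1=1$ in $X$. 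A finite bijective morphism between smooth complex varieties is an isomorphism (by Zariski's Main Theorem together with birationality in characteristic zero), so $B_1\cong A_1$, completing part (1).

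For part (2), the same discriminant argument shows each $A_i$ is a smooth irreducible component of $\bigtriangleup_f$. Away from $\bigcup_i A_i$ the fibers of $f$ have a single component by Proposition \ref{TEO1}, and this component, being the strict transform of a fiber of $g$ of the same conic type, has the same scheme structure, so $\bigtriangleup_f\setminus\bigcup_i A_i=\bigtriangleup_g$. The sets $A_i$ and $\bigtriangleup_g$ are mutually disjoint: a point in $A_i\cap \bigtriangleup_g$ would produce a fiber of $f$ with more than two components, contradicting the fiber analysis in the proof of Proposition \ref{TEO1}; and $A_i\cap A_j=\emptyset$ for $i\ne j$ by that same proposition. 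Hence $\bigtriangleup_f=A_1\sqcup\cdots\sqcup A_{r-1}\sqcup \bigtriangleup_g$, and the reduced-fiber claim over $\bigcup_i A_i$ follows from Proposition \ref{TEO1}. The main obstacle is the smoothness of $A_1$; it could also be attacked by a local multiplicity computation with $g_1^*(A_1)$, but is handled much more cleanly via the discriminant-singularity dichotomy from \cite{SARK}.
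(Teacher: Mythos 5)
Your proposal is correct, and its skeleton (induction on $r$, Ando's theorem for each $f_i$, Sarkisov's characterization of $\Sing(\bigtriangleup_f)$ for the discriminant decomposition) matches the paper's. The one step you handle genuinely differently is the isomorphism $B_1=A_1'\cong A_1$. The paper shows that the blow-up centre is a \emph{section} of $g$ over $A_i$ by a transversality computation: writing $\Exc(f_i)=f_i^*(-K_{X_i})+K_{X_{i-1}}$ and intersecting with the strict transform $\Gamma$ of a fiber $g_i^{-1}(p)$ gives $\Exc(f_i)\cdot\Gamma<2$, so the fiber meets the centre in a single reduced point; smoothness of $A_i$ then comes for free from $A_i\cong A_i'$. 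You instead establish smoothness of $A_1$ first, via reducedness of the fibers over $A_1$ and the identity $\Sing(\bigtriangleup_f)=\{y\mid f^{-1}(y)\ \text{non-reduced}\}$, and then upgrade the set-theoretic bijectivity of $g_1\vert_{B_1}$ to an isomorphism by ``finite, bijective, normal target, characteristic zero''. Both routes are valid (the paper itself runs the Sarkisov argument for smoothness of the $A_i$ in its proof of part (2)); yours trades the intersection computation for Zariski's Main Theorem, while the paper's yields the transversality statement directly.

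Two small glosses are worth tightening, though neither is a gap. Proposition \ref{TEO1} gives that the fibers of $f$ over $A_1$ have exactly two irreducible components, not literally that they are reduced; reducedness follows because a plane conic whose support has two components must be a pair of distinct lines (a double line is irreducible). Likewise, your argument that $A_i\cap\bigtriangleup_g=\emptyset$ via ``more than two components'' does not by itself exclude a double-line fiber of $g$ over a point of $A_i$; for that one should invoke the statement of Proposition \ref{TEO1} that the fibers of $g$ over the $A_i$ are irreducible and generically reduced, hence smooth conics.
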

\begin{proof} We first show $(1)$. By Theorem \ref{thmWis} it follows that $f$ is a conic bundle, $g$ is an elementary conic bundle, and by \cite[Theorem 2.3]{ANDO} every $f_{i}\colon X_{i-1}\to X_{i}$ is the blow-up of the manifold $X_{i}$ along a smooth subvariety of codimension 2. Let us denote by $A_{i}^{\prime}$ the centers of the blow-ups $f_{i}$, and we still denote by $A_{i}^{\prime}$ their image in $X_{r-1}$. In the proof of Proposition \ref{TEO1} we have already shown that $g$ is finite on $A_{i}^{\prime}$, and that the fibers of $g$ over $A_{i}$ are irreducible and intersect $A_{i}^{\prime}$ in only one point, for every $i=1,\dots,r-1$. 

We show that $g_{\mid_{A_{i}^{\prime}}}\colon A_{i}^{\prime}\to A_{i}$ is an isomorphism. For this purpose, we prove that the intersection between a fiber of $g_{\mid_{A_{i}^{\prime}}}$ and $A_{i}^{\prime}$ is transversal. Let $p \in A_{i}$. Set $g_{i}:= g\circ f_{r-1}\circ \dots \circ f_{i+1}\colon X_{i}\to Y$, where $i\in \{1,\dots,r-1\}$, and $g_{r-1}=g$. 
Let $\Gamma$ be the transform in $X_{i-1}$ of the fiber $g^{-1}(p)$. Since $g^{-1}(p)$ does not intersect the indeterminacy locus of $f_{r-1}\circ \dots\circ f_{i+1}$, then $g^{-1}(p)\cong g_{i}^{-1}(p)$.
We get:
\begin{center}
$\Exc{(f_{i})}\cdot \Gamma=(f_{i}^{*}(-K_{X_{i}})+K_{{X}_{i-1}})\cdot \Gamma=-K_{X_{i}}\cdot g_{i}^{-1}(p)-(-K_{X_{i-1}}\cdot \Gamma)<2$.
\end{center}

Hence $A_{i}^{\prime}$ is a section of $g$, and $A_{i}^{\prime}\cong A_{i}$ for every $i=1,\dots,r-1$. \\
 
Now we prove $(2)$. The inclusion $A_{1}\sqcup \dots \sqcup A_{r-1}\sqcup \bigtriangleup_{g} \subseteq \bigtriangleup_{f}$ is a simple consequence of some facts that we proved in Proposition \ref{TEO1}. In particular all fibers over each $A_{i}$ are reducible and every singular fiber of $g$ does not meet the indeterminacy locus of $f_{r-1}\circ f_{r-2}\circ \dots \circ f_{1}$, hence it is isomorphic to a singular fiber of $f$. On the other hand, if we take $y \in \bigtriangleup_{f}$ such that $y \not \in A_{1}\sqcup\cdots \sqcup A_{r-1}$ one has that $f^{-1}(y)$ is isomorphic to a singular fiber of $g$, hence $y \in \bigtriangleup_{g}$ and the equality holds. Moreover, being all fibers over $A_{i}$ reducible, by \cite[Proposition 1.8]{SARK}, we deduce that every $A_{i}$ is a smooth component of $\bigtriangleup_{f}$.  
\end{proof}
\begin{remark} In the setting of Proposition \ref{smoothcase}, we observe that the factorization of $f$, and thus the elementary fiber type contraction $g$ of the factorization, are not unique. Indeed, the factorization of $f$ depends by the choice of $r-1$ extremal rays of $\NE{(f)}$, one among $\mathbb{R}_{\geq 0}[e_{i}]$ and $\mathbb{R}_{\geq 0}[\hat{e}_{i}]$, for every $i=1,\dots,r-1$. 
\end{remark}
\begin{remark} \label{uniquelydet} In the setting of Proposition \ref{smoothcase}, let $\bigtriangleup_{0}$ be an irreducible component of $\bigtriangleup_{f}$. Then $f^{*}(\bigtriangleup_{0})$ is reducible if and only if $\bigtriangleup_{0}=A_{i}$ for some $i\in \{1,\dots,r-1\}$. In fact, $f^{*}(A_{i})=E_{i}+\hat{E}_{i}$ is reducible. On the other hand, if $\bigtriangleup_{0}\neq A_{i}$, then $\bigtriangleup_{0}\subseteq \bigtriangleup_{g}$ and $f^{*}(\bigtriangleup_{0})$ is irreducible by \cite[Proposition 4.8, (1)]{MM}. Thus $\bigtriangleup_{g}\subseteq \bigtriangleup_{f}$ is uniquely determined by $f$, and also the smoothness of $g$ that is equivalent to $\bigtriangleup_{f}=A_{1}\cup\dots\cup A_{r-1}$ is independent on the factorization of $f$.
\end{remark}
%


By Remark \ref{uniquelydet} and Corollary \ref{Fano} one gets the following:
\begin{corollary} \label{onlyred} Let $f\colon X\to Y$ be a Fano conic bundle, and set $r:=\rho_{X}-\rho_{Y}$. Let $g$ be as in Proposition \ref{smoothcase}. If $g$ is smooth, then $f$ does not have non-reduced fibers and $Y$ is Fano.
\end{corollary}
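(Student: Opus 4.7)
The key observation is that the statement follows almost immediately by combining Proposition \ref{smoothcase}(2), Remark \ref{uniquelydet}, and Corollary \ref{Fano}; the work is essentially just linking these pieces together correctly.

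The plan is to first translate the hypothesis ``$g$ is smooth'' into a concrete statement about the discriminant divisor of $f$. By Remark \ref{uniquelydet}, smoothness of $g$ (i.e.\ $g$ has no singular fibers) is equivalent to $\bigtriangleup_g=\emptyset$, and hence by Proposition \ref{smoothcase}(2) to the equality $\bigtriangleup_f=A_1\sqcup\dots\sqcup A_{r-1}$ as sets (disjoint by Proposition \ref{TEO1}(2)).

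Next I would deduce that $\bigtriangleup_f$ is itself smooth. Indeed, each $A_i$ is a smooth connected component of $\bigtriangleup_f$ by Proposition \ref{smoothcase}(2), and since they are pairwise disjoint, the union inherits smoothness with no singular points. Consequently $\Sing(\bigtriangleup_f)=\emptyset$. Invoking Sarkisov's description $\Sing(\bigtriangleup_f)=\{y\in Y\mid f^{-1}(y)\text{ is non-reduced}\}$ recalled in Section \ref{conicbundle}, this forces $f$ to have only reduced fibers, which is the first assertion.

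Finally, I would apply Corollary \ref{Fano}: a Fano conic bundle without non-reduced fibers has Fano target, whence $Y$ is Fano. No technical obstacle arises; the entire argument is a direct concatenation of the preceding results, and the only point requiring care is to cite the right part of Proposition \ref{smoothcase} to ensure that the $A_i$ are not merely smooth but also mutually disjoint, so that their disjoint union has empty singular locus.
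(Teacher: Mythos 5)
Your argument is correct and follows exactly the route the paper intends: the paper derives this corollary directly from Remark \ref{uniquelydet} (smoothness of $g$ is equivalent to $\bigtriangleup_f=A_1\cup\dots\cup A_{r-1}$) together with Proposition \ref{smoothcase}(2) and Corollary \ref{Fano}. Your extra step via $\Sing(\bigtriangleup_f)=\emptyset$ and Sarkisov's characterization of non-reduced fibers is a valid way to make the first assertion explicit (one could equally cite the last line of Proposition \ref{smoothcase}(2), which states that the fibers over the $A_i$ are reduced).
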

Now, using the $r-1$ pairs of divisors $E_{i}$, $\hat{E}_{i}$ as in Proposition \ref{TEO1}, we find a bound for the dimension of the relative cone $\NE{(f)}$. In particular, we are going to prove Theorem \ref{singular}. To this end, we need the two following lemmas. 
\begin{lemma} \label{div. ind} Notation as in Proposition \ref{TEO1}, and assume that $r\geq 2$. The $r$ numerical equivalence classes $[E_{1}],[E_{2}],\dots,[{E_{r-1}}],[\hat{E_{1}}]$ are linearly independent in $\mathcal{N}^{1}(X)$.
\end{lemma}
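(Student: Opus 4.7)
The plan is to exploit the pairwise disjointness $A_i\cap A_j=\emptyset$ from Proposition~\ref{TEO1}(2), which forces the supports of $f^{*}(A_i)=E_i+\hat{E}_i$ and $f^{*}(A_j)=E_j+\hat{E}_j$ to be disjoint in $X$. Since $X$ is locally factorial, each $E_i$ and $\hat{E}_i$ is Cartier. Combining this disjointness with the intersection numbers of Proposition~\ref{TEO1}(2) yields
\begin{equation*}
E_i\cdot e_j=-\delta_{ij},\qquad \hat{E}_i\cdot e_j=\delta_{ij},
\end{equation*}
since every curve contained in $E_j\cup\hat{E}_j$ is disjoint from both $E_i$ and $\hat{E}_i$ when $i\neq j$.

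Assume now that a relation
\begin{equation*}
\sum_{i=1}^{r-1}\lambda_i[E_i]+\mu[\hat{E}_1]\equiv 0
\end{equation*}
holds in $\mathcal{N}^{1}(X)$. First I would intersect both sides with $e_j$ for each $j=2,\ldots,r-1$: all terms vanish except $\lambda_j(E_j\cdot e_j)=-\lambda_j$, so $\lambda_j=0$ for every $j\geq 2$. The relation collapses to $\lambda_1[E_1]+\mu[\hat{E}_1]\equiv 0$. Intersecting next with $e_1$ gives $-\lambda_1+\mu=0$, i.e.\ $\mu=\lambda_1$, leaving
\begin{equation*}
\lambda_1\bigl([E_1]+[\hat{E}_1]\bigr)=\lambda_1\,[f^{*}(A_1)]\equiv 0.
\end{equation*}

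The final step is to show $\lambda_1=0$. Since $E_1+\hat{E}_1$ is an effective, nonzero divisor on the projective variety $X$, one has $(E_1+\hat{E}_1)\cdot H^{n-1}>0$ for any ample class $H$, hence $[E_1+\hat{E}_1]\neq 0$ in $\mathcal{N}^{1}(X)$. Consequently $\lambda_1=0$, and therefore $\mu=0$, proving the claimed linear independence.

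The delicate point to anticipate is that the $2\times 2$ block of intersections of $[E_1]$ and $[\hat{E}_1]$ against the contracted curves $e_1$ and $\hat{e}_1$ is $\left(\begin{smallmatrix}-1 & 1\\ 1 & -1\end{smallmatrix}\right)$, which is singular. Thus contracted curves alone cannot separate $[E_1]$ from $[\hat{E}_1]$, and one genuinely needs the numerical nontriviality of the pullback $f^{*}(A_1)$ (equivalently, a test curve on $X$ which is not contracted by $f$) to break the symmetry between the two classes.
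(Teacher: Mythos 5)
Your proof is correct and follows essentially the same route as the paper: intersect the putative relation with the contracted curves $e_{j}$ for $j\geq 2$ to kill all but the $i=1$ terms, then with $e_{1}$ to reduce to $\lambda_{1}(E_{1}+\hat{E}_{1})\equiv 0$, and conclude because a nonzero effective divisor is not numerically trivial. Your closing remark about the singular $2\times 2$ intersection block correctly identifies why the last step genuinely needs a non-contracted test (here, an ample class) rather than only fiber components.
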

\begin{proof}Suppose that $\hat{a}_{1} \hat{E_{1}}+\sum_{i=1}^{r-1} a_{i} E_{i}\equiv 0$, for some $\hat{a}_{1}, a_{i} \in \mathbb{R}$. The intersections with $e_{i}$, for every $i=2,\dots,r-1$, give us $a_{i}=0$, then $\hat{a}_{1} \hat{E_{1}}+ a_{1} E_{1}\equiv 0$. Intersecting with $e_{1}$ one has $\hat{a}_{1}=a_{1}$ so that $a_{1}(\hat{E_{1}}+E_{1})\equiv 0$. Since $\hat{E_{1}}+E_{1}$ is an effective divisor, it cannot be numerically equivalent to zero, hence $a_{1}=0$.
\end{proof}
\begin{lemma} \label{delta1} Let $f\colon X\to Y$ be as in Proposition \ref{TEO1}, and set $r:=\rho_{X}-\rho_{Y}$. If $r\geq 3$, then $\delta_{X}\geq r-1$.
\end{lemma}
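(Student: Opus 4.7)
The plan is to take $D := E_1$ as the test prime divisor in the definition of $\delta_X$ and to exhibit $r-1$ linearly independent classes in the orthogonal complement $(\mathcal{N}_1(E_1,X))^{\perp} \subset \mathcal{N}^1(X)$; by duality between $\mathcal{N}_1(X)$ and $\mathcal{N}^1(X)$ this will yield $\mathrm{codim}\,\mathcal{N}_1(E_1,X) \geq r-1$, hence $\delta_X \geq r-1$. A class $[H]$ lies in $(\mathcal{N}_1(E_1,X))^{\perp}$ exactly when $H \cdot C = 0$ for every irreducible curve $C \subset E_1$.

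For the first $r-2$ classes I use $[E_2], [E_3], \ldots, [E_{r-1}]$. Each $E_j$ with $j \geq 2$ is disjoint from $E_1$, because $f(E_1) = A_1$, $f(E_j) \subseteq A_j$, and $A_1 \cap A_j = \emptyset$ by Proposition \ref{TEO1}; in particular $[E_j] \in (\mathcal{N}_1(E_1,X))^{\perp}$. Linear independence of $\{[E_j]\}_{j\geq 2}$ follows from the intersection table $E_j \cdot \hat{e}_i = \delta_{ij}$ for $i,j \in \{2,\ldots,r-1\}$ (use $E_i \cdot \hat{e}_i = 1$ from Proposition \ref{TEO1} and $E_j \cap \hat{e}_i = \emptyset$ when $i \neq j$, again by disjointness of $A_i$ and $A_j$): this exhibits $\{[\hat{e}_2],\ldots,[\hat{e}_{r-1}]\}$ as a dual family to $\{[E_2],\ldots,[E_{r-1}]\}$.

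For the $(r-1)$-st class I propose $[f^*(A_2)] = [E_2]+[\hat{E}_2]$. Since $A_1 \cap A_2 = \emptyset$, the divisor $f^*(A_2)$ does not meet $E_1$, so $[f^*(A_2)] \in (\mathcal{N}_1(E_1,X))^{\perp}$. To check independence from the previous classes, suppose $[f^*(A_2)] = \sum_{j\geq 2} c_j [E_j]$; intersecting with each $\hat{e}_i$, $i \geq 2$, gives $c_i = 0$, because $\hat{e}_i$ is contracted by $f$ and so $f^*(A_2)\cdot \hat{e}_i = A_2 \cdot f_*(\hat{e}_i) = 0$ by the projection formula. This forces $[f^*(A_2)] = 0$, which is impossible: $f^*(A_2)$ is a nonzero effective Cartier divisor, hence has nonzero numerical class (it pairs positively with $H^{n-1}$ for any ample divisor $H$). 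The main subtlety is precisely this last point: the naive list of $2(r-2)$ divisors $\{E_j, \hat{E}_j\}_{j\geq 2}$ vanishing on $E_1$ only detects $r-2$ independent directions when paired against the fiber components $\hat{e}_i$ (because $E_j \cdot \hat{e}_i + \hat{E}_j \cdot \hat{e}_i = 0$), and one must recognize that the single combination $f^*(A_2) = E_2 + \hat{E}_2$ is a genuine extra direction in $\mathcal{N}^1(X)$, guaranteed nonzero by effectivity while still vanishing on $E_1$ thanks to the disjointness $A_1 \cap A_2 = \emptyset$.
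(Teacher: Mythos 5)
Your proof is correct and follows essentially the same route as the paper: the paper takes $E_{r-1}$ as the test divisor and uses the $r-1$ classes $[E_1],\dots,[E_{r-2}],[\hat{E}_1]$, whose independence is its Lemma~\ref{div. ind} (proved exactly by your two-part argument of pairing with fiber components plus nonvanishing of the effective class $[E_i]+[\hat{E}_i]$). Your choice of $E_1$ as test divisor and of $[E_2],\dots,[E_{r-1}],[f^*(A_2)]$ as the orthogonal classes is just a relabelling and a change of basis of the same construction.
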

\begin{proof} Let $E_{i},\hat{E}_{i}\subset X$ be the $r-1$ pairs of prime divisors as in Proposition \ref{TEO1}. They satisfy $E_{r-1}\cap(E_{1}\cup\cdots\cup E_{r-2}\cup \hat{E}_{1})=\emptyset$, hence we find that $\mathcal{N}_{1}(E_{r-1},X)\subseteq {E}_{1}^{\perp}\cap\dots\cap {E}_{r-2}^{\perp}\cap \hat{E}_{1}^{\perp}$. Lemma \ref{div. ind} yields that $\codim {\mathcal{N}_{1}(E_{r-1},X)}\geq r-1$, thus $\delta_{X}\geq r-1$. 
\end{proof}
\begin{proof}[Proof of Theorem \ref{singular}] Set $r:=\rho_{X}-\rho_{Y}$. Suppose that $r\geq 3$, otherwise there is nothing to prove. Using Lemma \ref{delta1}, we deduce that $\delta_{X}\geq r-1$. We observe that $X$ is Gorenstein. To this end, since $K_{X}$ is a Cartier divisor, it is enough to notice that $X$ is Cohen-Macaulay. Using \cite[Corollary 5.24]{KOLLAR} we know that $X$ has rational singularities and by \cite[Theorem 5.10]{KOLLAR}, it follows that these singularities are Cohen-Macaulay. By Theorem \ref{prodotto}, $r-1\leq 8$, hence $r\leq 9$.
\end{proof}
\section{Main Theorem on non elementary Fano conic bundles}
\noindent This section contains the central part of the paper, where we investigate \textit{non-elementary} Fano conic bundles.
 
Given a non-elementary conic bundle $f\colon X\to Y$ we can take a factorization as in Propositions \ref{TEO1} and \ref{smoothcase}. When the elementary conic bundle of the factorization of $f$ is singular, we improve Lemma \ref{delta1} in the following way:

\begin{lemma} \label{delta} Let $f\colon X\to Y$ be a Fano conic bundle. Let consider a factorization of $f$ as in Proposition \ref{TEO1} (1), and denote by $g$ the elementary conic bundle of this factorization. Set $r:=\rho_{X}-\rho_{Y}$. If $g$ is singular and $r\geq 2$, then $\delta_{X}\geq r$.
\end{lemma}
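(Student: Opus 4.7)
The plan is to improve the bound $\delta_X\geq r-1$ of Lemma \ref{delta1} by one, by exhibiting a prime divisor $D\subset X$ with $\codim \mathcal{N}_1(D,X)\geq r$. Since $g$ is singular, the discriminant $\bigtriangleup_g\subset Y$ is a non-empty divisor; I pick any irreducible component $\bigtriangleup_0\subseteq \bigtriangleup_g$. Because $\bigtriangleup_0$ is not one of the $A_1,\dots,A_{r-1}$, Remark \ref{uniquelydet} ensures that $f^*(\bigtriangleup_0)$ has irreducible support, and I take $D$ to be this support.

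The key geometric input comes from Proposition \ref{smoothcase}(2): the discriminant decomposes as $\bigtriangleup_f=A_1\sqcup\cdots\sqcup A_{r-1}\sqcup\bigtriangleup_g$, with the $A_i$ appearing as connected components. In particular $A_i\cap \bigtriangleup_0=\emptyset$ for every $i=1,\dots,r-1$, and pulling back by $f$ gives $D\cap E_i=D\cap \hat E_i=\emptyset$ for every $i$. Consequently every irreducible curve $C\subset D$ satisfies $E_i\cdot C=\hat E_i\cdot C=0$, which yields
\[
\mathcal{N}_1(D,X)\ \subseteq\ \bigcap_{i=1}^{r-1}\bigl(E_i^{\perp}\cap \hat E_i^{\perp}\bigr).
\]

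To conclude I would invoke Lemma \ref{div. ind}, which supplies $r$ linearly independent classes $[E_1],\dots,[E_{r-1}],[\hat E_1]$ in $\mathcal{N}^1(X)$; the intersection above is therefore a subspace of codimension at least $r$ in $\mathcal{N}_1(X)$. Hence $\codim \mathcal{N}_1(D,X)\geq r$, giving $\delta_X\geq r$. No real obstacle is expected here: the new input beyond Lemma \ref{delta1} is simply that the singularity of $g$ contributes a further component of $\bigtriangleup_f$ whose preimage is a single prime divisor annihilating all of $[E_1],[\hat E_1],\dots,[E_{r-1}],[\hat E_{r-1}]$ simultaneously, and the rest is the same codimension count already used in Lemma \ref{delta1}.
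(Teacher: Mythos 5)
Your proof is correct and follows essentially the same route as the paper: both take an irreducible component of $f^{-1}(\bigtriangleup_{g})$ (you justify its irreducibility via Remark \ref{uniquelydet}, the paper simply picks a component of the preimage), observe it is disjoint from the $E_{i}$ and $\hat{E}_{i}$, and conclude via the $r$ independent classes of Lemma \ref{div. ind} that $\codim\mathcal{N}_{1}(D,X)\geq r$.
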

\begin{proof} By our assumption, $\bigtriangleup_{g}\neq\emptyset$. Let us consider its inverse image $\tilde{\bigtriangleup}_{g}$ in $X$. Take an irreducible component of this divisor that we call $\tilde{{\bigtriangleup}^{\prime}}_{g}$. Note that $\tilde{{\bigtriangleup}^{\prime}}_{g}\cap(E_{1}\cup\cdots\cup E_{r-1}\cup \hat{E}_{1})=\emptyset$, thus $\mathcal{N}_{1}(\tilde{{\bigtriangleup}^{\prime}}_{g},X)\subseteq {E}_{1}^{\perp}\cap\dots\cap {E}_{r-1}^{\perp}\cap \hat{E}_{1}^{\perp}$. Using Lemma \ref{div. ind} we get the statement. 
\end{proof}
\begin{thm} \label{TEOPRINC} Let $f\colon X\to Y$ be a Fano conic bundle. Then $\rho_{X}-\rho_{Y}\leq 8$. Moreover:
\begin{enumerate}[(1)]
\item if $\rho_{X}-\rho_{Y}\geq 4$, then $X\cong S\times T$, where $S$ is a del Pezzo surface, $T$ is an $(n-2)$-dimensional Fano manifold, $Y\cong \mathbb{P}^{1}\times T$ so that $Y$ is Fano, and $f$ is induced by a conic bundle $S\to \mathbb{P}^{1}$.
\end{enumerate}
Let us denote by $g\colon X_{r-1}\to Y$ the elementary conic bundle in a factorization of $f$ as in Propositions \ref{TEO1} and \ref{smoothcase}.
\begin{enumerate}[(2)]
\item If $\rho_{X}-\rho_{Y}=3$, then $g$ is smooth, $Y$ is a Fano variety, and there exists a smooth $\mathbb{P}^{1}$-fibration $\xi\colon Y\to Y^{\prime}$ where $Y^{\prime}$ is smooth and Fano. 
\end{enumerate}
\begin{enumerate}[(3)]
\item If $\rho_{X}-\rho_{Y}=2$, and $g$ is singular, then there exists a smooth $\mathbb{P}^{1}$-fibration $\xi\colon Y\to Y^{\prime}$ where $Y^{\prime}$ is smooth.
\end{enumerate}
\end{thm}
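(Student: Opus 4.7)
The plan is to work through the three parts in order, each time exploiting the factorization of $f$ from Propositions \ref{TEO1} and \ref{smoothcase} that produces the divisors $A_1,\dots,A_{r-1}\subset Y$ (with $r:=\rho_X-\rho_Y$), the paired divisors $E_i,\hat{E}_i\subset X$ satisfying $f^*(A_i)=E_i+\hat{E}_i$, the fibre components $e_i,\hat{e}_i$, and the elementary conic bundle $g$ at the end. The guiding dichotomy throughout is whether $g$ is smooth or singular. I first collect the two sources of control on the Lefschetz defect: Lemma \ref{delta} gives $\delta_X\ge r$ when $g$ is singular, while Lemma \ref{delta1} gives $\delta_X\ge r-1$ when $g$ is smooth (in which case Corollary \ref{onlyred} also delivers that $f$ has no non-reduced fibre and that $Y$ is Fano).

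For part (1), where $r\ge 4$, my first step is to deduce $\delta_X\ge 4$. In the singular-$g$ subcase this is immediate from Lemma \ref{delta}. In the smooth-$g$ subcase I would sharpen Lemma \ref{delta1} by using that $E_{r-1}$ is disjoint from every $\hat{E}_i$ with $i<r-1$ as well (since the $A_i$ are pairwise disjoint), which places $\mathcal{N}_1(E_{r-1},X)$ inside a larger intersection of hyperplanes. Theorem \ref{prodotto} then yields $X\cong S\times T$ with $\rho_S=\delta_X+1$, and Lemma \ref{product_contraction} factors $f=(f_1,f_2)\colon S\times T\to Y_1\times Y_2$. Because a product $f_1^{-1}(y_1)\times f_2^{-1}(y_2)$ is connected and one-dimensional only when one factor is a point, exactly one of $f_1,f_2$ is generically an isomorphism; tracking where the divisors $E_i,\hat{E}_i$ live (they are "vertical" for the trivial factor) pins the conic bundle to the surface side, forcing $f_2$ to be an isomorphism and $f_1\colon S\to Y_1$ a conic bundle with $Y_1\cong\mathbb{P}^1$. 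A Picard-number count then delivers both the product statement in (1) and the global bound $r\le 8$ (no conic bundle on a del Pezzo surface drops the Picard number by $9$).

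Parts (2) and (3) share a common MMP strategy, which I would carry out uniformly. First, when $X\cong S\times T$ is of product type, apply the reasoning of (1) directly. Otherwise, apply Proposition \ref{MMP} and Lemma \ref{iperpiano} to one of the divisors $E_i,\hat{E}_i$ (for example $\hat{E}_1$) to produce a smooth prime divisor $G\subset X$ which is a $\mathbb{P}^1$-bundle $\pi\colon G\to W$ with fibre $g_0$ satisfying $\hat{E}_1\cdot g_0>0$ and $[g_0]\notin\mathcal{N}_1(\hat{E}_1,X)$. Since $f^*(A_1)\cdot g_0=(E_1+\hat{E}_1)\cdot g_0>0$ (checking $E_1\cdot g_0\ge 0$ via $G\ne E_1$), the curve $f(g_0)$ is one-dimensional, so $G$ dominates $Y$. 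I then apply Proposition \ref{smooth morphism} with $A=A_1$ (the hypothesis $\mathcal{N}_1(A_1,Y)\subsetneq\mathcal{N}_1(Y)$ being verified through the MMP construction, since $[g_0]$ descends to a class outside $\mathcal{N}_1(A_1,Y)$) to obtain the desired smooth $\mathbb{P}^1$-fibration $\xi\colon Y\to Y'$. For (2) only, I further argue that the resulting $\xi$ forbids non-reduced fibres of $f$, so $g$ is smooth; Corollary \ref{onlyred} then gives $Y$ Fano, and the Kollár--Miyaoka--Mori theorem applied to $\xi$ (cf.\ \cite{K}) gives $Y'$ Fano.

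The main obstacle I anticipate is in parts (2)--(3): ensuring that the $\mathbb{P}^1$-bundle divisor $G$ produced by the MMP really dominates $Y$ with one-dimensional image of its $\pi$-fibres, and that the hypotheses of Proposition \ref{smooth morphism} hold for a suitable $A=A_i$. Both points require careful bookkeeping of which extremal rays of the intermediate MMP stages lie inside $\mathcal{N}_1(\hat{E}_1,X)$, together with Remark \ref{intersezione} to handle arbitrary irreducible curves on $G$. A secondary hurdle is the sharpening in the smooth-$g$ subcase that pushes the bound from $r\le 9$ to $r\le 8$, and the argument in (2) that the existence of $\xi$ actually rules out non-reduced fibres of $f$, for which I would use that every such fibre would have to sit over a section of $\xi$ and combine this with the fibre-component analysis of Proposition \ref{TEO1}.
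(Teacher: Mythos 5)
Your overall architecture matches the paper's (Lefschetz--defect bounds plus Theorem \ref{prodotto} for (1); an MMP applied to a suitable divisor to produce a $\mathbb{P}^{1}$-bundle divisor dominating $Y$, followed by Proposition \ref{smooth morphism}, for (2) and (3)), but three of the hardest steps have genuine gaps. First, in part (1) with $r=4$, your ``sharpened'' Lemma \ref{delta1} assumes that adding the hyperplanes $\hat{E}_{i}^{\perp}$ to the intersection raises the codimension; this fails exactly when the classes $[E_{1}],[\hat{E}_{1}],[E_{2}],[\hat{E}_{2}]$ are linearly dependent, which does occur (for instance whenever the $A_{i}$ are numerically proportional, as in the product case $S\times T$ you are trying to detect, so you cannot assume it away). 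The paper splits into two cases here: if some two pairs give four independent classes one argues as you do; otherwise the relations $A_{1}\equiv bA_{2}\equiv cA_{3}$ yield a contraction $\Phi\colon Y\to\mathbb{P}^{1}$ whose general fibre pulls back to a prime divisor of $X$ disjoint from all six divisors $E_{i},\hat{E}_{i}$, and Lemma \ref{div. ind} then gives $\delta_{X}\geq 4$. Without this second branch your proof of $\delta_{X}\geq 4$ is incomplete.

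Second, in parts (2)--(3) the inference ``$f(g_{0})$ is one-dimensional, so $G$ dominates $Y$'' is a non sequitur: a $\mathbb{P}^{1}$-bundle divisor can map onto a divisor of $Y$ while its fibres still map to curves. Establishing that $G$ dominates $Y$ is the real work; the paper does it by proving $G_{1}\cdot e>0$ for the general fibre $e$ of $f$, writing $e\equiv e_{2}+\hat{e}_{2}$ and using Remark \ref{intersezione} together with intersections against a pair $E_{j},\hat{E}_{j}$ that is \emph{disjoint} from the divisor the MMP was run on. This is why the MMP is run on $E_{1}$ in part (2) (exploiting the disjoint pair $E_{2},\hat{E}_{2}$) and on an irreducible component of $f^{-1}(\bigtriangleup_{g})$ in part (3) (exploiting the disjoint pair $E_{1},\hat{E}_{1}$). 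Your uniform choice of $\hat{E}_{1}$, with positivity tested against $f^{*}(A_{1})=E_{1}+\hat{E}_{1}$, breaks this: $E_{1}$ and $\hat{E}_{1}$ meet, so Proposition \ref{MMP} does not exclude $G=E_{1}$ (which maps onto $A_{1}$, not onto $Y$), and you cannot conclude $E_{1}\cdot g_{0}\geq 0$; in part (3) no second pair disjoint from $\hat{E}_{1}$ even exists, since $f^{*}(\bigtriangleup_{0})$ is irreducible for components $\bigtriangleup_{0}$ of $\bigtriangleup_{g}$. Finally, in part (2) your claim that the existence of $\xi$ ``forbids non-reduced fibres'' is unsubstantiated; the paper's proof that $g$ is smooth is a contradiction argument using $\delta_{X}\leq 3$ to force $Y\cong\mathbb{P}^{1}\times Y^{\prime}$ with $\bigtriangleup_{g}=\{\text{points}\}\times Y^{\prime}$ smooth and simply connected, against Mori--Mukai's theorem that the discriminant of an elementary conic bundle has no smooth simply connected irreducible components. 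That input is essential and absent from your sketch.
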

\begin{proof}[\textbf{Proof of (1)}] Set $r:=\rho_{X}-\rho_{Y}$. Let us first show that $\delta_{X}\geq 4$. 

If $r>4$, this inequality follows by Lemma \ref{delta1}. Assume that $r=4$. We have three pairs of prime divisors $E_{i},\hat{E}_{i}\subset X$, for $i=1,2,3$, as in Proposition \ref{TEO1}. 
Suppose that there exist two pairs such that the numerical classes of the four divisors of the pairs are linearly independent in $\mathcal{N}^{1}(X)$. We can assume that they are $E_{1}, \hat{E}_{1}, E_{2}, \hat{E}_{2}$. Then $\mathcal{N}_{1}(E_{3},X)\subseteq {E}_{1}^{\perp} \cap {\hat{E}}_{1}^{\perp} \cap {E}_{2}^{\perp} \cap {\hat{E}}_{2}^{\perp}$ and it follows that $\delta_{X}\geq 4$.

Let assume now that for every two pairs $E_{i},\hat{E}_{i}$, the numerical classes of the four divisors are linearly dependent in $\mathcal{N}^{1}(X)$. 
This means, for instance, that $E_{1}\equiv \hat{a} \hat{E}_{1}+b E_{2}+\hat{b} E_{2}$ for some $\hat{a}, b, \hat{b}\in \mathbb{R}$. The intersection with the fibers $e_{1}$ and $e_{2}$ gives $E_{1}+\hat{E}_{1}\equiv b (E_{2}+\hat{E}_{2})$. In the same way, from the linear dependence of the numerical classes $[E_{1}], [\hat{E}_{1}], [E_{3}], [\hat{E}_{3}]$, we get the relation $E_{1}+\hat{E}_{1}\equiv c (E_{3}+\hat{E}_{3})$ where $c\in \mathbb{R}$. 

Let $A_{1}$, $A_{2}$, $A_{3}$ be as in Proposition \ref{TEO1}, so that $f^{*}(A_{i})=E_{i}+\hat{E}_{i}$. Hence $f^{*}(A_{1})\equiv b f^{*}(A_{2})\equiv c f^{*}(A_{3})$ and being $f^{*}\colon \mathcal{N}^{1}(Y)\to \mathcal{N}^{1}(X)$ injective, one has $A_{1}\equiv b A_{2}\equiv c A_{3}$. Moreover, by Proposition \ref{TEO1} $(2)$, we know that the divisors $A_{i}$ are pairwise disjoint, so these three divisors are nef and cut a facet of $\NE{(Y)}$, whose contraction $\Phi\colon Y\to Z$ sends $A_{1}, A_{2}, A_{3}$ to points (see \cite[Lemma 2.6]{CAS3}), so $\dim{Z}=1$. Since $X$ is Fano, $X$ is rationally connected, so that $Z\cong \mathbb{P}^{1}$. Hence we get a contraction $X\stackrel{f}{\to} Y \stackrel{\Phi}{\to} \mathbb{P}^{1}$. The general fiber $G$ of $\Phi \circ f$ is an irreducible divisor of $X$ disjoint from $E_{i}$ and $\hat{E}_{i}$ for every $i=1,2,3$, thus $\mathcal{N}_{1}(G,X)\subseteq E_{1}^{\perp}\cap {\hat{E}}_{1}^{\perp}\cap {E}_{2}^{\perp}\cap E_{3}^{\perp}$. By Lemma \ref{div. ind}, we find again that $\delta_{X}\geq 4$. \\ 

Since $\delta_{X}\geq 4$, by Theorem \ref{prodotto} we know that $X\cong S\times T$, where $S$ is a del Pezzo surface and $T$ is an $(n-2)$-dimensional Fano manifold. By Lemma \ref{product_contraction} we have that $Y\cong S_{1}\times T_{1}$, where $S_{1}$, $T_{1}$ are smooth projective varieties, and the Fano conic bundle $f$ takes the following form: $f=(h_{1},h_{2})$ where $h_{1}\colon S\to S_{1}$, $h_{2}\colon T\to T_{1}$. We get a partition of $Y$ in two subsets:

$Y_{1}=\{(p,q)\in Y\mid f^{-1}(p,q)=F_{1}\times\{point\} \ \text{where} \ \dim {F_{1}}=1\}$ and 

$Y_{2}=\{(p,q)\in Y\mid f^{-1}(p,q)=\{point\}\times F_{2} \ \text{where} \ \dim {F_{2}}=1\}$.

By the upper-semicontinuity of fiber dimension there are two possibilities: 

$(a)$ $Y_{1}=Y$ and $Y_{2}=\emptyset$; 

$(b)$ $Y_{1}=\emptyset$ and $Y_{2}=Y$. 

Let us assume that $(a)$ holds. Then $h_{1}$ is a conic bundle on $S$, $\dim{S_{1}}=1$, $S_{1}\cong \mathbb{P}^{1}$, and $h_{2}$ is the identity on $T$. Hence $Y\cong\mathbb{P}^1\times T$ and the statement follows.

Let us now suppose that $(b)$ holds. We have that $h_{1}$ is the identity on $S$, $h_{2}$ is a Fano conic bundle on $T$, and $\rho_{T}-\rho_{T_{1}}=\rho_{X}-\rho_{Y}\geq 4$. 

By induction on the dimension, we deduce that $T\cong S_{2}\times T_{2}$, where $S_{2}$ is a del Pezzo surface, $T_{2}$ is an $(n-4)$-dimensional Fano manifold, $T_{1}\cong \mathbb{P}^{1}\times T_{2}$, and $h_{2}:T\to T_{1}$ is induced by a conic bundle $S_{2}\to \mathbb{P}^{1}$. 

We can conclude that $X\cong S_{2}\times S\times T_{2}$, $Y\cong \mathbb{P}^{1}\times S\times T_{2}$, and $f$ is induced by the conic bundle $S_{2}\to \mathbb{P}^{1}$, hence we get the statement. \\ 

\noindent \textbf{\textit{Proof of (2)}}. The proof will be achieved in some steps.\\

\noindent \textit{\textbf{Step 1}}: The statement holds when $X\cong S\times T$, where $S$ is a del Pezzo surface and $T$ is a Fano manifold.

\begin{proof}[Proof of Step 1] Keeping the notation used in the proof of $(1)$, we get the same two cases that we call again $(a)$ and $(b)$.

If $(a)$ holds, we deduce as before that $Y\cong \mathbb{P}^{1}\times T$, and that $f$ is induced by a conic bundle $S\to \mathbb{P}^{1}$. In particular, we get $(2)$.

If $(b)$ holds, we have $Y\cong S\times T_{1}$, and $f=(id_{S}, h_{2})$, where $h_{2}:T\to T_{1}$ is a Fano conic bundle with $\rho_{T}-\rho_{T_{1}}=\rho_{X}-\rho_{Y}=3$.
We can proceed by induction on the dimension. We apply Proposition \ref{smoothcase} $(1)$ to $h_{2}$, and we denote by $g^{\prime}$ the elementary conic bundle in the factorization of $h_{2}$. By induction, we find that $g^{\prime}$ is smooth, and being $g=(id_{S}, g^{\prime})$, $g$ is also smooth. Moreover, still by induction, there exists a smooth $\mathbb{P}^{1}$-fibration $\xi^{\prime}\colon T_{1}\to Y_{1}$, where $Y_{1}$ is smooth and Fano, so that $\xi:=(id_{S},\xi^{\prime})\colon Y\to S\times Y_{1}$ is a smooth $\mathbb{P}^{1}$-fibration onto a variety that is smooth and Fano, and this shows $(2)$. 
\end{proof}

From now on, we suppose that $X\ncong S\times T$. In particular, by Theorem \ref{prodotto} we know that $\delta_{X}\leq 3$, and by Lemma \ref{delta1} one has that $\delta_{X}\geq 2$. Thus there are only two possibilities: $\delta_{X}=2$ or $\delta_{X}=3$. 

Recall also that $\rho_{X}-\rho_{Y}=3$, hence by Proposition \ref{TEO1} we have two pairs $E_{1}, \hat{E}_{1}$ and $E_{2}, \hat{E}_{2}$ such that $(E_{i}+\hat{E}_{i})=f^{*}(A_{i})$, for $i=1,2$, and $(E_{1} \cup \hat{E}_{1})\cap (E_{2} \cup \hat{E}_{2})=\emptyset$. \\

\noindent \textit{\textbf{Step 2}}: Up to replacing $E_{1}$ with $\hat{E}_{1}$, $E_{2}$ or $\hat{E}_{2}$, there exists a smooth, prime divisor $G_{1}\subset X$ that is a $\mathbb{P}^{1}$-bundle with fiber $g_{1}\subset G_{1}$ such that $G_{1}\cdot g_{1}=-1$, $E_{1}\cdot g_{1}>0$, $G_{1}\neq E_{1}$, and $[g_{1}] \notin \mathcal{N}_{1}(E_{1},X)$. Moreover, $G_{1}$ dominates $Y$ and $f^{*}(A_{2})\cdot g_{1}>0$.

\begin{proof}[Proof of Step 2]
Suppose that there exists $D$ such that $\codim{\mathcal{N}_{1}(D,X)}=2$. We assume that $D=E_{1}$. Then $\codim{\mathcal{N}_{1}(E_{1},X)}=2$, so that being $\mathcal{N}_{1}(E_{1},X)\subseteq E_{2}^{\perp}\cap \hat{E}_{2}^{\perp}$, by Lemma \ref{div. ind} it follows that $\mathcal{N}_{1}(E_{1},X)= E_{2}^{\perp}\cap \hat{E}_{2}^{\perp}$.
 
Applying Proposition \ref{MMP} and Lemma \ref{iperpiano} to the divisor $E_{1}$, we get one smooth and prime divisor $G_{1}\subset X$ that is a $\mathbb{P}^{1}$-bundle with fiber $g_{1}$, such that $G_{1}\cdot g_{1}=-1$, $E_{1}\cdot g_{1}>0$, $G_{1}\neq E_{1}$, and $[g_{1}] \notin \mathcal{N}_{1}(E_{1},X)$ so that $H:=\mathcal{N}_{1}(E_{1},X)\oplus \mathbb{R}[g_{1}]$ is a hyperplane in $\mathcal{N}_{1}(X)$. 

Being $[g_{1}]\notin \mathcal{N}_{1}(E_{1},X)= E_{2}^{\perp}\cap \hat{E}_{2}^{\perp}$, then either $E_{2}\cdot g_{1}\neq0$ or $\hat{E}_{2}\cdot g_{1}\neq 0$. We observe that the intersection numbers $E_{2}\cdot g_{1}$ and $\hat{E}_{2}\cdot g_{1}$ cannot be negative. Indeed, if $E_{2}\cdot g_{1}< 0$, then $E_{2}=G_{1}$ and we get a contradiction because $E_{1}\cdot g_{1}> 0$, hence $G_{1}\cap E_{1}\neq \emptyset$, instead $E_{2}\cap E_{1}=\emptyset$ and similarly for $\hat{E}_{2}$. We can assume that $E_{2}\cdot g_{1}>0$, thus $f^{*}(A_{2})\cdot g_{1}>0$; in particular $E_{2}\cap G_{1}\neq \emptyset$. 

We show that $G_{1}\cdot e> 0$, where $e$ is the general fiber of $f\colon X\to Y$. By Proposition \ref{TEO1} (2), we recall that $e\equiv e_{2}+\hat{e}_{2}$. By contradiction, if $G_{1}\cdot e=0$, one has $G_{1}\cdot e_{2}= G_{1}\cdot \hat{e}_{2}=0$ (with the same method applied before, we deduce that the intersection numbers $G_{1}\cdot e_{2}$ and $G_{1}\cdot \hat{e}_{2}$ cannot be negative). Then there exists an irreducible curve $\bar{e}_{2}\subset G_{1}$ such that $\bar{e}_{2}\equiv e_{2}$, and applying Remark \ref{intersezione} to the divisors $G_{1}$ and $E_{1}$, we have $\bar{e}_{2}\equiv \lambda g_{1}+\mu C^{\prime}$ where $\lambda, \mu \in \mathbb{R}$, $\mu\geq 0$ and $C^{\prime}$ is an irreducible curve contained in $G_{1}\cap E_{1}$. The intersection with $E_{2}$ gives us $-1=\lambda E_{2}\cdot g_{1}$, so that $\lambda<0$. 

If we intersect with $\hat{E}_{2}$ we get $1=\lambda \hat{E}_{2}\cdot g_{1}$, so that $\hat{E}_{2}\cdot g_{1}<0$ that is a contradiction because as observed before $G_{1}\neq \hat{E}_{2}$. Thus $G_{1}\cdot e>0$, and hence $G_{1}$ dominates $Y$.\\

Suppose now that $\codim{\mathcal{N}_{1}(E_{i},X)}=\codim{\mathcal{N}_{1}(\hat{E}_{i},X)}=3$, for $i=1,2$. We choose one among these divisors $E_{i}, \hat{E}_{i}$, for instance $E_{1}$, and we apply to it Proposition \ref{MMP} and Lemma \ref{iperpiano}.
In this way, we obtain two disjoint prime divisors of $X$, $G_{1}$ and $G_{2}$, such that every  $G_{i}$ is a $\mathbb{P}^{1}$-bundle with fiber $g_{i}\subset G_{i}$, and $G_{i}\cdot g_{i}=-1$. Moreover, $E_{1}\cdot g_{i}>0$, $[g_{i}]\notin \mathcal{N}_{1}(E_{1},X)$, $G_{i}\neq E_{1}$ and $H:=\mathcal{N}_{1}(E_{1},X)\oplus \mathbb{R}[g_{1}]\oplus \mathbb{R}[g_{2}]$ is a hyperplane in $\mathcal{N}_{1}(X)$.

We observe that ${E}_{2}^{\perp}\neq {\hat{E}}_{2}^{\perp}$, because the divisors $E_{2}$ and $\hat{E}_{2}$ are not numerically proportional. Being $H$ a hyperplane, it will be different to at least one among ${E}_{2}^{\perp}$ and ${\hat{E}}_{2}^{\perp}$. We can assume that $H \neq {E}_{2}^{\perp}$. If $E_{2}\cdot g_{1}=E_{2}\cdot g_{2}=0$, then ${E}_{2}^{\perp}$ contains $\mathcal{N}_{1}(E_{1},X)$, $[g_{1}]$, $[g_{2}]$ and hence $H$, which is impossible. Up to exchanging $G_{1}$ and $G_{2}$, we can assume that $E_{2}\cdot g_{1}\neq 0$. It is not difficult to check that $E_{2}$ and $\hat{E}_{2}$ are different from $G_{1}$ and $G_{2}$, so that $E_{2}\cdot g_{i}\geq 0$, $\hat{E}_{2}\cdot g_{i}\geq 0$, for $i=1,2$. Finally, being $E_{2}\cdot g_{1}> 0$ and $\hat{E}_{2}\cdot g_{1}\geq 0$, one has that $(E_{2}+\hat{E}_{2})\cdot g_{1}=f^{*}(A_{2})\cdot g_{1}>0$. 
\end{proof}

\noindent \textit{\textbf{Step 3}}: There exists a smooth $\mathbb{P}^{1}$-fibration $\xi\colon Y\to Y^{\prime}$ where $Y^{\prime}$ is smooth and projective, and $\NE{(\xi)}=\mathbb{R}_{\geq0}[f(g_{1})]$.

\begin{proof}[Proof of Step 3]
Let $G_{1}\subset X$ be as in Step 2, and consider the restriction $f_{\mid_{G_{1}}}\colon G_{1}\to Y$. We observe that $f_{\mid_{G_{1}}}$ is a morphism such that $\dim{f(g_{1})}=1$. 
The statement easily follows applying Proposition \ref{smooth morphism} and Step 2. More precisely, by the proof of the same proposition (see \cite[Lemma 3.2.25]{CAS1}), we get $\NE{(\xi)}=\mathbb{R}_{\geq0}[g_{1}^{\prime}]$, where $g_{1}^{\prime}:=f(g_{1})$.  

\end{proof}
The situation is represented in Figure \ref{fig:fig3}.

\begin{figure}[ht]
	\centering
		\includegraphics[scale=1]{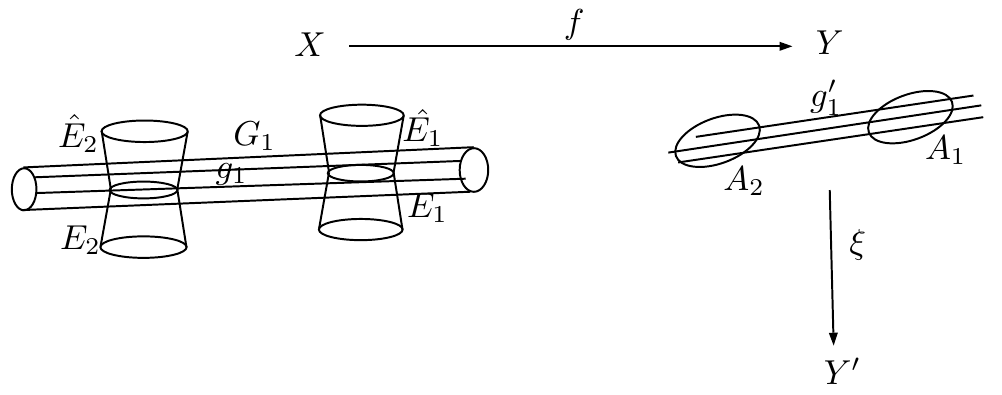}
	\caption{If $\rho_{X}-\rho_{Y}=3$, there exists a $\mathbb{P}^{1}$-bundle $G_{1}$ which dominates $Y$, and $g_{1}^{\prime}:=f(g_{1})$ spans the extremal ray whose contraction is $\xi$.}
	\label{fig:fig3}
\end{figure}

We are left to show that $g$ is smooth to get $(2)$, in fact by Corollary \ref{onlyred} this will imply that both $Y$ and $Y^{\prime}$ are Fano. From now on, we assume by contradiction that $g$ is singular.\\

\noindent \textit{\textbf{Step 4}}: The numerical classes $[E_{1}], [\hat{E_{1}}], [E_{2}], [\hat{E_2}]$ are linearly dependent in $\mathcal{N}^{1}(X)$ and there is a fibration $\Phi\colon Y\to \mathbb{P}^{1}$ which sends the divisors $A_{1},A_{2}, \bigtriangleup_{g}$ to points. 
 
\begin{proof}[Proof of Step 4]
Let us denote by $\tilde{\bigtriangleup}_{g}$ the inverse image in $X$ of $\bigtriangleup_{g}$. It is disjoint from $E_{i}$, $\hat{E}_{i}$, for $i=1,2$, thus $\mathcal{N}_1(\tilde{\bigtriangleup}_{g},X)\subseteq E_{1}^{\perp}\cap {\hat{E}_{1}}^{\perp}\cap E_{2}^{\perp}\cap {\hat{E}_{2}}^{\perp}$. Since $\delta_{X}\leq 3$, we deduce that the numerical classes $[E_{1}], [\hat{E_{1}}], [E_{2}], [\hat{E_2}]$ are linearly dependent in $\mathcal{N}^{1}(X)$.

Now we proceed as in the proof of (1): we get $E_{1}+\hat{E_{1}}\equiv a(E_{2}+\hat{E_{2}})$, $a \in \mathbb{R}$ and again $A_{1}\equiv a A_{2}$. The nef divisors $A_{1}, A_{2}$ give a contraction $\Phi\colon Y\to \mathbb{P}^{1}$ such that $\Phi(A_{i})=\{pt\}$. Since $\bigtriangleup_{g}\cap A_{1}=\emptyset$, the image of every component of $\bigtriangleup_{g}$ is also a point.
\end{proof}


\noindent \textit{\textbf{Step 5}}: $Y\cong \mathbb{P}^{1}\times Y^{\prime}$ and $\bigtriangleup_{g}=\{points\}\times Y^{\prime}$.

\begin{proof}[Proof of Step 5]
We have two maps from $Y$, \textit{i.e.} $\Phi\colon Y\to \mathbb{P}^{1}$ and $\xi\colon Y\to Y^{\prime}$, where $\Phi$ is finite on the fibers of $\xi$, since $A_{2}\cdot f(g_{1})>0$. 
 
We first prove that a general fiber $A_{0}$ of $\Phi$ is a Fano variety. Consider the fiber type $K$-negative contraction $\Psi:=\Phi\circ f\colon X\stackrel{f}{\to}Y\stackrel{\Phi}{\to}\mathbb{P}^{1}$, whose general fiber is $f^{-1}(A_{0})$ and is smooth and Fano. One has that $f_{\mid_{f^{-1}(A_{0})}} \colon f^{-1}(A_{0})\to A_{0}$ is a $\mathbb{P}^{1}$-bundle: indeed  $\bigtriangleup_{f}=A_{1}\sqcup A_{2}\sqcup \bigtriangleup_{g}$ is a union of fibers of $\Phi$, so $A_{0}\cap \bigtriangleup_{f}=\emptyset$. Then, by Corollary \ref{Fano} it follows that $A_{0}$ is Fano.

Now, using that $\NE{(\Phi)}$ is generated by finitely many classes of rational curves (see \cite[Lemma 2.6]{CAS3}) and that the general fiber of $\Phi$ is a Fano manifold, the same proof of \cite[Lemma 4.9]{CAS4} yields that $Y\cong \mathbb{P}^{1}\times Y^{\prime}$ and $\bigtriangleup_{g}=\{points\}\times Y^{\prime}$.
\end{proof}

\noindent\textit{\textbf{Step 6}}: We reach a contradiction.

\begin{proof}[Proof of Step 6]
We have already shown that $\bigtriangleup_{g}=\{points\}\times Y^{\prime}$, thus $\bigtriangleup_{g}$ is smooth, hence by Corollary \ref{Fano} one has that  $\Sing{(\bigtriangleup_{f})}=\emptyset$ and $Y$ is Fano. Being $Y\cong \mathbb{P}^{1}\times Y^{\prime}$, $Y^{\prime}$ is Fano too, so that each connected component of $\bigtriangleup_{g}$ is simply connected.
Since $g$ is elementary, by the proof of \cite[Proposition 4.7 $(1)$]{MM} we know that $\bigtriangleup_{g}$ does not have smooth and simply connected irreducible components.  
Then we reach a contradiction and this concludes the proof of $(2)$.
%

\end{proof}

\noindent \textbf{\textit{Proof of (3)}}. By assumption $\rho_{X}-\rho_{Y}=2$, so there exists one pair of prime divisors $E_{1}, \hat{E}_{1}$ as in Proposition \ref{smoothcase} such that $E_{1}+\hat{E}_{1}=f^{*}(A_{1})$. Moreover, by our assumption, $\bigtriangleup_{g}\neq \emptyset$.  Let ${\tilde{{\bigtriangleup}^{\prime}}}_{g}$ be the inverse image in $X$ of $\bigtriangleup_{g}$. We work with an irreducible component of this divisor that we call $\tilde{\bigtriangleup}_{g}$. Notice that $\tilde{\bigtriangleup}_{g}\cap (E_{1}\cup \hat{E}_{1})=\emptyset$.

The proof adopts the same techniques used in the previous point. If $X\cong S\times T$ where $\dim{S}=2$, it is easy to check the statement by induction as in Step 1. Assume that $X\ncong S\times T$, so that by Theorem \ref{prodotto} we know that $\delta_{X}\leq 3$. On the other hand, using Lemma \ref{delta}, we find that $\delta_{X}\geq 2$. 

If $\codim{\mathcal{N}_{1}(\tilde{\bigtriangleup}_{g},X)}=2$ one has $\mathcal{N}_{1}(\tilde{\bigtriangleup}_{g},X)=E_{1}^{\perp}\cap {\hat{E}_{1}}^{\perp}$ and applying Proposition \ref{MMP} and Lemma \ref{iperpiano} to $\tilde{\bigtriangleup}_{g}$, we find a smooth prime divisor $G_{1}$ of $X$ which is a $\mathbb{P}^{1}$-bundle with fiber $g_{1}$ such that $G_{1}\cdot g_{1}=-1$, $\tilde{\bigtriangleup}_{g}\cdot g_{1}>0$, $G_{1}\neq \tilde{\bigtriangleup}_{g}$, and a hyperplane of $\mathcal{N}_{1}(X)$ that is $\mathcal{N}_{1}(\tilde{\bigtriangleup}_{g},X)\oplus \mathbb{R}[g_{1}]$. Using the same method as in Step 2 (replacing $E_{1}$ with $\tilde{\bigtriangleup}_{g}$), we deduce that $G_{1}$ dominates $Y$, and $f^{*}(A_{1})\cdot g_{1}>0$. 

Otherwise, if $\codim{\mathcal{N}_{1}(\tilde{\bigtriangleup}_{g},X)}=3$, again by Proposition \ref{MMP} and Lemma \ref{iperpiano} applied to the divisor $\tilde{\bigtriangleup}_{g}$, we get two disjoint prime divisors of $X$, $G_{1}$ and $G_{2}$. Every $G_{i}$ is a $\mathbb{P}^{1}$-bundle with fiber $g_{i}\subset G_{i}$, and $G_{i}\cdot g_{i}=-1$. Moreover, $\tilde{\bigtriangleup}_{g}\cdot g_{i}>0$, $[g_{i}]\notin \mathcal{N}_{1}(\tilde{\bigtriangleup}_{g},X)$, $G_{i}\neq \tilde{\bigtriangleup}_{g}$, and we find the hyperplane $\mathcal{N}_{1}(\tilde{\bigtriangleup}_{g},X)\oplus \mathbb{R}[g_{1}]\oplus \mathbb{R}[g_{2}]$. Proceeding again as in Step 2, we prove that the $\mathbb{P}^{1}$-bundle $G_{i}$ ($i=1$ or $i=2$) dominates $Y$. We can assume that it is $G_{1}$.

In any case, we proceed as done in Step 3 to find the smooth $\mathbb{P}^{1}$-fibration $\xi$ (we replace $A_{2}$ with $A_{1}$ and $A_{1}$ with $\bigtriangleup_{g}$). The situation is similar to that represented in Figure \ref{fig:fig3}. This concludes the proof of $(3)$ and hence the proof of the theorem. 
\end{proof}

Now, looking at the fibers of the conic bundle, we deduce the relation between $\rho_{X}-\rho_{Y}$ and the singular fibers of $f$.

\begin{corollary} \label{fibre} Let $f\colon X\to Y$ be a Fano conic bundle. If $\rho_{X}-\rho_{Y}\geq 3$, then $f$ has only reduced fibers. If $\rho_{X}-\rho_{Y}=2$ and f has non-reduced fibers, then $Y$ has a smooth $\mathbb{P}^1$-fibration. 
\end{corollary}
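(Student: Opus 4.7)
The plan is to derive the corollary directly from Theorem \ref{TEOPRINC} by working through its three cases, with Corollary \ref{onlyred} bridging between smoothness of the elementary factor $g$ and reducedness of the fibers of $f$.

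First I would dispose of the range $\rho_X-\rho_Y\geq 4$. By part (1) of Theorem \ref{TEOPRINC}, $X\cong S\times T$ with $f$ induced by a conic bundle $h\colon S\to\mathbb{P}^1$ on a del Pezzo surface $S$. Every fiber of $f$ is of the form $h^{-1}(p)\times\{t\}$, so it suffices to observe that $h$ has no non-reduced fibers: this is standard for a conic bundle on a smooth surface since a multiple fiber would force the total space to be singular along the underlying line; equivalently, this also follows from Proposition \ref{smoothcase} applied in dimension two. Hence $f$ has only reduced fibers in this range.

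Next, for $\rho_X-\rho_Y=3$, part (2) of Theorem \ref{TEOPRINC} states that the elementary conic bundle $g$ occurring in a factorization of $f$ is smooth, and then Corollary \ref{onlyred} immediately gives that $f$ has only reduced fibers. This handles the first assertion completely.

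Finally, suppose $\rho_X-\rho_Y=2$ and $f$ admits a non-reduced fiber. Fix a factorization of $f$ as in Propositions \ref{TEO1} and \ref{smoothcase}, with elementary conic bundle $g\colon X_{r-1}\to Y$. If $g$ were smooth, Corollary \ref{onlyred} would force $f$ to have only reduced fibers, a contradiction. Therefore $g$ must be singular, and part (3) of Theorem \ref{TEOPRINC} applies, producing the smooth $\mathbb{P}^1$-fibration $\xi\colon Y\to Y'$ required by the statement.

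There is no real obstacle beyond organizing the case analysis; the only point that is not a literal quotation from Theorem \ref{TEOPRINC} or Corollary \ref{onlyred} is the claim that a conic bundle on a smooth surface has reduced fibers, which is either standard or a direct consequence of Proposition \ref{smoothcase} in dimension two.
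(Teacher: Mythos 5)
Your proof is correct and follows essentially the same route as the paper: reduce everything to whether the elementary conic bundle $g$ in the factorization is smooth, and then quote Theorem \ref{TEOPRINC} together with Corollary \ref{onlyred} (the paper uses Proposition \ref{smoothcase}~(2) directly to see that non-reduced fibers of $f$ can only come from $g$, which is the same content). Your separate treatment of the range $\rho_X-\rho_Y\geq 4$ via the product structure is a harmless refinement -- the paper subsumes it under ``$g$ is smooth'' -- and your observation that a conic bundle on a smooth surface has no multiple fibers is indeed standard.
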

\begin{proof} By Proposition \ref{smoothcase} $(2)$, we recall that the fibers of $f$ over the divisors $A_{i}$ are singular but reduced, hence $f^{-1}(y)$ is non-reduced if and only if $g^{-1}(y)$ is.  

If $\rho_{X}-\rho_{Y}\geq 3$, by Theorem \ref{TEOPRINC} $(2)$, $g$ is a smooth contraction so there are no non-reduced fibers.

If $\rho_{X}-\rho_{Y}=2$ and $f$ has non-reduced fibers, then $\bigtriangleup_{g}\neq \emptyset$, hence the statement follows by Theorem \ref{TEOPRINC} $(3)$. 
\end{proof}

We conclude this subsection proving Theorem \ref{ThmINTRODUZ}. 
\begin{proof} [Proof of Theorem \ref{ThmINTRODUZ}] Statement $(1)$ is the same of Theorem \ref{TEOPRINC}. 

To prove $(2)$ it is enough to observe that if $\rho_{X}-\rho_{Y}=3$, then applying Theorem \ref{TEOPRINC} and Corollary \ref{onlyred} it follows that $f$ has only reduced fibers. The rest of statement was already shown in Theorem \ref{TEOPRINC}.

We show $(3)$; assume that $\rho_{X}-\rho_{Y}=2$ and $Y$ is not Fano. Take a factorization of $f$ as in Proposition \ref{smoothcase} and denote by $g$ the elementary conic bundle in this factorization. By Corollary \ref{Fano} one has that $f$ has non-reduced fibers. As observed in the proof of Corollary \ref{fibre}, this means that $g$ has non-reduced fibers, so that $g$ is singular and by Theorem \ref{TEOPRINC} $(3)$ it follows that there exists a smooth $\mathbb{P}^{1}$-fibration $\xi\colon Y\to Y^{\prime}$, where $Y^{\prime}$ is smooth. It remains to prove that $\rho_{Y}\geq 3$.

Being $g$ singular, by Proposition \ref{smoothcase} we get two disjoint divisors in $Y$, $A_{1}$ and $\bigtriangleup_{g}$. 
We prove that $\dim{\mathcal{N}_{1}(\bigtriangleup_{g},Y)}\geq 2$. Since $Y$ is not Fano, there exists an extremal ray $R$ of $Y$ such that $-K_{Y}\cdot R\leq 0$, and by Proposition \ref{PropWis} it follows that $\Lo{(R)}\subseteq \Sing{(\bigtriangleup_{f})}=\Sing{(\bigtriangleup_{g})}$. Assume by contradiction that $\dim{\mathcal{N}_{1}(\bigtriangleup_{g},Y)}=1$. Then all curves in $\bigtriangleup_{g}$ are numerically proportional in $Y$, thus they are all contained in $\Lo{(R)}$ and we reach a contradiction because $\Sing{(\bigtriangleup_{g})}\subsetneq \bigtriangleup_{g}$. Thus $\dim{\mathcal{N}_{1}(\bigtriangleup_{g},Y)}\geq 2$ and since $\mathcal{N}_{1}(\bigtriangleup_{g},Y)\subseteq A_{1}^{\perp}$, we get $\rho_{Y}\geq 3$.
\end{proof}

\section{Related results and Examples}
\noindent As usual, given a Fano conic bundle $f\colon X\to Y$, set $r:=\rho_{X}-\rho_{Y}$. In this section we give some relevant examples and discuss some applications of our results.  To conclude, we will focus on the case $r=2$.
  
\subsection{Example of Fano conic bundle with $\mathbf{r=3}$ where $X$ is not a product} \label{noproduct}
Let $X$ be the Fano threefold N.$18$ in \cite[$\S$12.8]{PAR}, which is not a product. This $X$ is obtained from the blow-up of three smooth rational curves in $Z=\mathbb{P}_{\mathbb{P}^{1}}(\mathcal{O}^{\oplus 2}\oplus \mathcal{O}(1))$, and $\rho_{X}=5$. Mori and Mukai proved the existence of a conic bundle $X\to \mathbb{F}_{1}$ (see table 5 of \cite{MM3}). Notice that the target $\mathbb{F}_{1}$ has a smooth $\mathbb{P}^{1}$-fibration $\xi\colon \mathbb{F}_{1}\to \mathbb{P}^{1}$, as stated by Theorem \ref{TEOPRINC}.
\subsection{Examples of elementary Fano conic bundles} \label{elementaryex}
Wi$\acute{\text{s}}$niewski gives an example of a Fano conic bundle $f\colon X\to Y$ in which $Y$ is not a Fano variety (see \cite[Example $\S 4$]{WIS}). Here our goal is to show, using Corollary \ref{esempio}, that $f$ is an elementary conic bundle.

Wi$\acute{\text{s}}$niewski's construction was generalized by Debarre (see \cite[Example 3.16, (3)]{DEB}) and goes as follows. Let $a$ and $b \in \mathbb{Z}^{+}$, set $m=2a+2b-1$, let $\mathcal{E}$ be the vector bundle associated to the locally free sheaf $\mathcal{O}_{\mathbb{P}^{m}}\oplus \mathcal{O}_{\mathbb{P}^{m}}(2a)\oplus \mathcal{O}_{\mathbb{P}^{m}}(2b)$, and let $Y=\mathbb{P}_{\mathbb{P}^{m}}(\mathcal{E})$ (notice that in his example, Wi$\acute{\text{s}}$niewski takes $a=b=1$, so that $Y$ has dimension 5).

One has $\rho_{Y}=2$, and $-K_{Y}=3D$, where $D$ is associated with the line bundle $\mathcal{O}_{Y}(1)$. In particular, $-K_{Y}$ is not ample, because $D$ is a nef and big divisor, not ample (see for instance \cite[$\S$2.3, Lemma 2.3.2]{LAZ}).

Wi$\acute{\text{s}}$niewski gives a rank $3$ vector bundle $\mathcal{M}$ on $Y$ and a smooth divisor $X\subset \mathbb{P}_{Y}(\mathcal{M})$ such that $X\to Y$ is a Fano conic bundle. The divisor $X$ has a non-empty base locus in $\mathbb{P}_{Y}(\mathcal{E})$.

Now, by Corollary \ref{esempio} it follows that $r=1$ because $\rho_{Y}=2$ and also because $Y$ does not have a smooth $\mathbb{P}^{1}$-fibration.

\subsection{Complements on case $\mathbf{r}=2$.} \label{esempiosing}
If $f\colon X\to Y$ is a Fano conic bundle where $r:=\rho_{X}-\rho_{Y}=2$, then by Proposition \ref{smoothcase} we get two factorizations for $f$. Take one among these, and let $g$ be the elementary conic bundle of this factorization.

The author knows no example of such a Fano conic bundle $f\colon X\to Y$ with $g$ singular.
Assume that there exists such an example, and that $\delta_{X}\geq 4$. By Theorem \ref{prodotto}, we know that $X\cong S\times T$ where $\dim{S}=2$ and by induction on the dimension (as done in Step 1 in the proof of Theorem \ref{TEOPRINC}), we find that $X$ is a product between a finite number of del Pezzo surfaces and another Fano variety $Z$ with $\delta_{Z}\leq 3$, $f$ is induced by a conic bundle $f^{\prime}\colon Z\to Z^{\prime}$ with $\rho_{Z}-\rho_{Z^{\prime}}=2$, and the elementary conic bundle in the factorization of $f$ is singular.  
Therefore, to study Fano conic bundles $f\colon X\to Y$, where $r=2$ and $g$ is singular, it makes sense to focus on the case in which $\delta_{X}\leq 3$, and hence $X\ncong S\times T$ by Proposition \ref{prodotto}.

\begin{proposition} We use the notation as in Proposition \ref{smoothcase}. Assume that $X\not\cong S\times T$, where $S$ is a del Pezzo surface. Let $f\colon X\to Y$ be a Fano conic bundle with $r=2$ and such that $\bigtriangleup_{g}\neq \emptyset$. Then $\delta_{X}\in \{2,3\}$ and the following hold:
\begin{enumerate}[(1)]
\item $\bigtriangleup_{g}$ is not a section of the smooth $\mathbb{P}^{1}$-fibration $\xi\colon Y\to Y^{\prime}$ of Theorem \ref{TEOPRINC} (3);
\item the numerical classes $[\bigtriangleup_{g}]$ and $[A_{1}]$ are linearly independent in $\mathcal{N}^{1}(Y)$;
\item if $\triangle$ is an irreducible component of $\bigtriangleup_{g}$, then for every divisor $D\subset Y$ one has $D\cap (\triangle\cup A_{1})\neq \emptyset$;
\item $\bigtriangleup_{g}$ is connected, so that $\bigtriangleup_{f}$ has exactly two connected components: $A_{1}$ that is smooth and $\bigtriangleup_{g}$ that contains $\Sing{(\bigtriangleup_{f})}$.
\end{enumerate}
\end{proposition}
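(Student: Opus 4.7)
The bound $\delta_X\in\{2,3\}$ is immediate: Lemma~\ref{delta} applied with $r=2$ gives $\delta_X\geq 2$, while $X\not\cong S\times T$ combined with Theorem~\ref{prodotto} forces $\delta_X\leq 3$. For (1), I would argue by contradiction. If $\bigtriangleup_g$ is a section of $\xi$, then $\xi|_{\bigtriangleup_g}\colon\bigtriangleup_g\to Y'$ is an isomorphism, so $\bigtriangleup_g$ is smooth and irreducible. Since $X$ is a smooth Fano variety it is rationally connected, hence so are $Y$ (via $f$, whose fibers are conics) and $Y'$ (via the smooth $\mathbb{P}^1$-fibration $\xi$); a smooth projective rationally connected variety is simply connected, so $\bigtriangleup_g\cong Y'$ is smooth and simply connected. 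This is exactly the obstruction invoked in Step~6 of the proof of Theorem~\ref{TEOPRINC}~(2): by (the proof of) \cite[Proposition~4.7~(1)]{MM} applied to the elementary singular conic bundle $g$, the discriminant $\bigtriangleup_g$ can have no smooth simply-connected irreducible component -- contradiction.

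For (2), suppose for contradiction that $[\bigtriangleup_g]\equiv c[A_1]$; since both divisors are effective and nonzero one has $c>0$. Because $A_1\cap\bigtriangleup_g=\emptyset$, for any irreducible curve $C\subset A_1$ we get $A_1\cdot C=c\,\bigtriangleup_g\cdot C=0$, and likewise on $\bigtriangleup_g$, so both divisors are nef. I would then replay Steps~4--6 of the proof of Theorem~\ref{TEOPRINC}~(2) with $\bigtriangleup_g$ playing the role of $A_2$: the proportional nef classes $[A_1],[\bigtriangleup_g]$ produce a contraction $\Phi\colon Y\to\mathbb{P}^1$ sending $A_1$ and $\bigtriangleup_g$ to points. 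The composite $\Phi\circ f\colon X\to\mathbb{P}^1$ is a fiber type $K_X$-negative contraction; a general fiber $A_0$ of $\Phi$ is disjoint from $\bigtriangleup_f=A_1\sqcup\bigtriangleup_g$, so $f^{-1}(A_0)\to A_0$ is a smooth conic bundle and Corollary~\ref{Fano} makes $A_0$ Fano. Then \cite[Lemma~4.9]{CAS4} yields $Y\cong\mathbb{P}^1\times Y'$ with $\bigtriangleup_g=\{\text{points}\}\times Y'$; hence $\bigtriangleup_g$ is smooth, Corollary~\ref{Fano} forces $Y$ and thus $Y'$ Fano, and each connected component of $\bigtriangleup_g$ is isomorphic to $Y'$, smooth and simply connected -- the same Mori-Mukai contradiction.

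For (3), fix an irreducible component $\triangle\subset\bigtriangleup_g$ and assume for contradiction that a divisor $D\subset Y$ is disjoint from $\triangle\cup A_1$. Let $\tilde\triangle$ be an irreducible component of $f^{-1}(\triangle)$ and $\tilde D=f^{-1}(D)$; then $\tilde D$ is disjoint from $\tilde\triangle,E_1,\hat E_1$. Intersecting a putative relation $a[\tilde\triangle]+b[E_1]+c[\hat E_1]\equiv 0$ with the fiber $e_1$ (where $E_1\cdot e_1=-1$, $\hat E_1\cdot e_1=1$, and $\tilde\triangle\cdot e_1=0$ since $e_1\subset f^{-1}(A_1)$ and $A_1\cap\triangle=\emptyset$) forces $b=c$ and hence $a[\tilde\triangle]\equiv -b[f^*A_1]$; if $a,b\neq 0$, pushing forward via $f_*$ (and using that $f|_{\tilde\triangle}$ has 1-dimensional generic fibers over the divisor $\triangle$) reduces this to $[\triangle]\equiv\lambda[A_1]$, after which the argument of (2) applied to $\triangle$ alone produces the same product-plus-Mori-Mukai contradiction. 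Thus $[\tilde\triangle],[E_1],[\hat E_1]$ are linearly independent, $\operatorname{codim}\mathcal{N}_1(\tilde D,X)\geq 3$, and $\delta_X=3$. I then follow the template of Theorem~\ref{TEOPRINC}~(3): Proposition~\ref{MMP} applied to $\tilde\triangle$ yields a $\mathbb{P}^1$-bundle $G_1\subset X$ dominating $Y$, with fiber $g_1$ satisfying $\tilde\triangle\cdot g_1>0$ and $f^*A_1\cdot g_1>0$, and the induced smooth $\mathbb{P}^1$-fibration $\xi\colon Y\to Y'$ has fiber $\ell=f(g_1)$ with $A_1\cdot\ell>0$ and $\triangle\cdot\ell>0$. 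If $D\cdot\ell=0$ then $D=\xi^*D'$ is a union of $\xi$-fibers, and surjectivity of $A_1\to Y'$ immediately gives $D\cap A_1\neq\emptyset$, contradiction; the hard subcase is $D\cdot\ell>0$, where I would combine $\tilde D\cdot g_1>0$ (by projection formula) with the simultaneous disjointness of $\tilde D$ from $\tilde\triangle,E_1,\hat E_1$ and the hyperplane structure $\mathcal{N}_1(\tilde D,X)\oplus\mathbb{R}[g_1]\oplus\mathbb{R}[g_2]$ from a second $\mathbb{P}^1$-bundle $G_2$ (Lemma~\ref{iperpiano}, available since $\delta_X=3$) to force a numerical collapse analogous to (2). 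Finally, (4) is immediate from (3): for any two components $\triangle,\triangle'\subset\bigtriangleup_g$ we have $\triangle'\cap A_1=\emptyset$, so (3) applied with $D=\triangle'$ gives $\triangle\cap\triangle'\neq\emptyset$, and $\bigtriangleup_g$ is connected; together with Proposition~\ref{smoothcase}~(2) this exhibits $\bigtriangleup_f$ as the disjoint union of $A_1$ (smooth) and $\bigtriangleup_g$ (which contains $\operatorname{Sing}\bigtriangleup_f$, since $A_1$ is smooth). The main obstacle will be closing the subcase $D\cdot\ell>0$ in (3): the delicate point is carefully bookkeeping the $f^*$/$f_*$ relations in a setting where $\tilde\triangle\to\triangle$ has positive-dimensional generic fibers, so that push-forward of $[\tilde\triangle]$ vanishes and the numerical reduction to (2) must be routed through the MMP data rather than directly.
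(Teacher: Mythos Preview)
Your arguments for $\delta_X\in\{2,3\}$ and for parts (1), (2), and (4) match the paper's proof essentially line for line.

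For (3) there is a genuine gap. You lift everything to $X$, split on the sign of $D\cdot\ell$, dispose of $D\cdot\ell=0$ correctly, and then explicitly leave the case $D\cdot\ell>0$ open, sketching only that a ``second $\mathbb{P}^1$-bundle $G_2$'' from Lemma~\ref{iperpiano} should ``force a numerical collapse analogous to~(2)''. This sketch conflates MMP data attached to two different divisors: the $G_1$ and $\xi$ you are using come from Proposition~\ref{MMP} applied to $\tilde\triangle$, whereas the hyperplane decomposition $\mathcal{N}_1(\tilde D,X)\oplus\mathbb{R}[g_1]\oplus\mathbb{R}[g_2]$ you invoke would come from applying it to $\tilde D$; there is no reason these produce compatible $\mathbb{P}^1$-bundles or the same $g_1$. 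I do not see a way to close the argument along the lines you indicate.

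The paper's proof of (3) avoids the case split entirely by staying in $Y$ and bringing in one extra ingredient, \cite[Lemma~3.2]{OCCHETTA}. Since $A_1\cdot\ell>0$ and $\triangle\cdot\ell>0$ (with $\ell=f(g_1)$ the fiber of $\xi$), that lemma gives
\[
\mathcal{N}_1(Y)=\mathcal{N}_1(A_1,Y)\oplus\mathbb{R}[\ell]=\mathcal{N}_1(\triangle,Y)\oplus\mathbb{R}[\ell],
\]
so both $\mathcal{N}_1(A_1,Y)$ and $\mathcal{N}_1(\triangle,Y)$ are hyperplanes. They are also both contained in the hyperplane $D^\perp$, hence $\mathcal{N}_1(A_1,Y)=\mathcal{N}_1(\triangle,Y)=D^\perp$. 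Now $A_1$ and $\triangle$ are disjoint, hence nef, and any curve $C\subset\triangle$ has $[C]\in\mathcal{N}_1(\triangle,Y)=\mathcal{N}_1(A_1,Y)$, so $C$ is numerically a combination of curves lying in $A_1$, giving $\triangle\cdot C=0$; likewise for $A_1$. Thus both divisors define the same contraction $\Phi\colon Y\to\mathbb{P}^1$ sending $\triangle$ and $A_1$ to points, and one concludes exactly as in~(2). This is shorter, never lifts to $X$, and the sign of $D\cdot\ell$ plays no role.
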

\begin{proof}
Using Theorem \ref{prodotto} and Lemma \ref{delta} we find that either $\delta_{X}=3$ or $\delta_{X}= 2$. We prove $(1)$ by contradiction: if $\bigtriangleup_{g}$ is a section of $\xi$, one has that $\bigtriangleup_{g}\cong Y^{\prime}$, hence $\bigtriangleup_{g}$ is smooth and simply connected which is impossible (see Step 6 in the proof of Theorem \ref{TEOPRINC}). 

We show $(2)$. Assume by contradiction that $\bigtriangleup_{g}\equiv a A_{1}$, with $a\in \mathbb{R}$. Being $\bigtriangleup_{g}\cap A_{1}=\emptyset$, $\bigtriangleup_{g}$ and $A_{1}$ are nef divisors of $Y$ and as we explained in the proof of Theorem \ref{TEOPRINC} $(1)$, they give a contraction $\Phi\colon Y\to \mathbb{P}^{1}$ that sends them to points. We can proceed as done in Step 5 and in Step 6 in the proof of Theorem \ref{TEOPRINC}. We get a contradiction in the same way, because every irreducible component of $\bigtriangleup_{g}$ would be smooth and simply connected.


To prove $(3)$, assume that there exists a divisor $D\subset Y$ and an irreducible component $\triangle$ of $\bigtriangleup_{g}$ such that $D\cap (\triangle\cup A_{1})=\emptyset$. Then $\mathcal{N}_{1}(\triangle,Y)\subseteq D^{\perp}$ and $\mathcal{N}_{1}(A_{1},Y)\subseteq D^{\perp}$. 

By the proof of Theorem \ref{TEOPRINC}, we get $A_{1}\cdot g_{1}^{\prime}=(E_{1}+\hat{E}_{1})\cdot g_{1}>0$, where $g_{1}^{\prime}=f(g_{1})$ spans the extremal ray whose contraction is the smooth $\mathbb{P}^{1}$-fibration $\xi$ stated by the same theorem. Hence \cite[Lemma 3.2]{OCCHETTA} implies that $\mathcal{N}_{1}(Y)=\mathcal{N}_{1}(A_{1},Y)\oplus \mathbb{R}[g_{1}^{\prime}]$. The same holds taking the divisor $\triangle$, thus $\mathcal{N}_{1}(Y)=\mathcal{N}_{1}(\triangle,Y)\oplus \mathbb{R}[g_{1}^{\prime}]$. Both $\mathcal{N}_{1}(\triangle,Y)$ and $\mathcal{N}_{1}(A_{1},Y)$ have codimension 1 in $\mathcal{N}_{1}(Y)$, so that $\mathcal{N}_{1}(\triangle,Y)=\mathcal{N}_{1}(A_{1},Y)=D^{\perp}$. But $\triangle\cap A_{1}=\emptyset$, thus $\triangle$ and $A_{1}$ are nef divisors, and using that $\mathcal{N}_{1}(\triangle,Y)=\mathcal{N}_{1}(A_{1},Y)$ it is easy to check that for every irreducible curve $C\subset \triangle$, one has $\triangle\cdot C=0$ and the same holds for $A_{1}$. Therefore, $\triangle$ and $A_{1}$ give the same contraction $\Phi\colon Y\to \mathbb{P}^{1}$ that sends them to points and we reach a contradiction as in $(2)$. 

Finally $(4)$ is a straightforward consequence of $(3)$ and of Proposition \ref{smoothcase} $(2)$.
\end{proof}

%
%



\nocite{*}
\small
\footnotesize
\bibliographystyle{plain} 
\bibliography{biblio}

\begin{thebibliography}{10}

\bibitem{Alexeev}
V.~ALEXEEV.
\newblock Higher dimensional analogues of stable curves.
\newblock {\em Proceedings of {M}adrid {ICM}2006, Eur. Math. Soc. Pub. House},
  pages 515--536, 2006.

\bibitem{ANDO}
T.~ANDO.
\newblock On extremal rays of the higher-dimensional varieties.
\newblock {\em Invent. Math.}, 81:347--357, 1985.

\bibitem{B}
A.~BEAUVILLE.
\newblock Varieties de {P}rym et {J}acobiennes interm$\acute{\text{e}}$diaires.
\newblock {\em Ann. Scient. $\acute{E}$c. Norm. Sup.}, pages 309--391, 1977.

\bibitem{BIRKAR}
C.~BIRKAR, P.~CASCINI, C.D. HACON, and J.~MCKERNAN.
\newblock Existence of minimal models for varieties of log general type.
\newblock {\em J. Amer. Math. Soc}, 23:405--468, 2010.

\bibitem{CAS3}
C.~CASAGRANDE.
\newblock Quasi-elementary contractions of {F}ano manifolds.
\newblock {\em Compos. Math.}, 144:1429--1460, 2008.

\bibitem{CAS4}
C.~CASAGRANDE.
\newblock On {F}ano manifolds with a birational contraction sending a divisor
  to a curve.
\newblock {\em Michigan Math. J.}, 58:783--805, 2009.

\bibitem{CAS1}
C.~CASAGRANDE.
\newblock On the {P}icard number of divisors in {F}ano manifolds.
\newblock {\em Ann. Scient. $\acute{E}$c. Norm. Sup.}, 45:363--403, 2012.

\bibitem{CAS2}
C.~CASAGRANDE.
\newblock On some {F}ano manifolds admitting a rational fibration.
\newblock {\em J. London Math. Soc.}, 90:1--28, 2014.

\bibitem{DEB}
O.~DEBARRE.
\newblock {\em Higher-dimensional algebraic geometry}.
\newblock Universitext, Springer-Verlag, New York, 2001.

\bibitem{EJIRI}
S.~EJIRI.
\newblock Positivity of anti-canonical divisors and {F}-purity of fibers.
\newblock ArXiv:1604.02022v2, 2016.

\bibitem{FUJ}
O.~FUJINO and Y.~GONGYO.
\newblock On images of weak {F}ano manifolds.
\newblock {\em Mathematische Zeitschrift}, 270(1-2):531--544, 2012.

\bibitem{HART}
R.~HARTSHORNE.
\newblock {\em Algebraic Geometry}.
\newblock Springer-Verlag New York, 1997.

\bibitem{HU}
Y.~HU and S.~KEEL.
\newblock Mori dream space and {GIT}.
\newblock {\em Michigan Math. J.}, 48:331--348, 2000.

\bibitem{K}
J.~KOLL{\'A}R, Y.~MIYAOKA, and S.~MORI.
\newblock Rational connectedness and boundedness of {F}ano manifolds.
\newblock {\em Journal of Differential Geometry}, 36(3):765--779, 1992.

\bibitem{KOLLAR}
J.~KOLL\'AR and S.~MORI.
\newblock {\em Birational geometry of algebraic varieties}.
\newblock Cambridge Tracts in Math.,134, Cambridge Univ. Press, Cambridge,
  1998.

\bibitem{LAZ}
R.~LAZARSFELD.
\newblock {\em Positivity in algebraic geometry $I$}.
\newblock Ergebn. Math. Grenzg. Springer, 2004.

\bibitem{MM3}
S.~MORI and S.~MUKAI.
\newblock Classification of {F}ano 3-folds with $b_{2}\geqslant 2$.
\newblock {\em Manuscripta Math.}, 36:147--162, 1981.
\newblock \textit{Erratum}, Manuscripta Math., 110:407,2003.

\bibitem{MM2}
S.~MORI and S.~MUKAI.
\newblock On {F}ano 3-folds with $b_{2}\geqslant 2$.
\newblock {\em Advanced Studies in Pure Mathematics 1}, pages 101--129, 1983.

\bibitem{MM}
S.~MORI and S.~MUKAI.
\newblock Classification of {F}ano 3-folds with $b_{2}\geqslant 2$, {I}.
\newblock In {\em Algebraic and Topological Theories}, pages 496--545, 1985.

\bibitem{NOCE}
G.~DELLA NOCE.
\newblock On the {P}icard number of singular {F}ano varieties.
\newblock {\em Int. Math. Res. Not.}, pages 955--990, 2014.

\bibitem{OCCHETTA}
G.~OCCHETTA.
\newblock A characterization of products of projective spaces.
\newblock {\em Canad. Math. Bull}, 49:270--280, 2006.

\bibitem{PAR}
A.N. PARSHIN and I.R. SHAFAREVICH.
\newblock {\em Algebraic {G}eometry {V}: {F}ano {V}arieties}.
\newblock Encyclopaedia of Mathematical Sciences, Springer, 1999.

\bibitem{SARK}
V.G. SARKISOV.
\newblock On conic bundle structures.
\newblock {\em Mathematics of the USSR-Izvestiya}, 20:355--390, 1983.

\bibitem{WIS}
J.A. WI\'SNIEWSKI.
\newblock On contractions of extremal rays of {F}ano manifolds.
\newblock {\em J. reine angew. Math.}, 417:141--157, 1991.

\end{thebibliography}

\end{document}